\documentclass[12pt]{amsart}
\usepackage[margin=1in]{geometry}

\usepackage{aliascnt}
\usepackage{amssymb}
\usepackage{amsmath}
\usepackage{enumitem}
\usepackage{xcolor}
\definecolor{darkblue}{rgb}{0.0, 0.0, 0.8}
\usepackage[colorlinks=true, allcolors=darkblue]{hyperref}
\usepackage{todonotes}
\usepackage{tabularx}
\usepackage{mathtools}
\usepackage{aliascnt}
\usepackage{autonum}
\usepackage[colorlinks=true, allcolors=darkblue]{hyperref}
\usepackage{comment}
\newtheorem{theorem}{Theorem}[section]

\newaliascnt{lemma}{theorem}
\newtheorem{lemma}[lemma]{Lemma}
\aliascntresetthe{lemma}

\newaliascnt{proposition}{theorem}
\newtheorem{proposition}[proposition]{Proposition}
\aliascntresetthe{proposition}

\newaliascnt{corollary}{theorem}
\newtheorem{corollary}[corollary]{Corollary}
\aliascntresetthe{corollary}

\newaliascnt{conjecture}{theorem}

\aliascntresetthe{conjecture}

\newaliascnt{assumption}{theorem}

\aliascntresetthe{assumption}

\newaliascnt{definition}{theorem}
\newtheorem{definition}[definition]{Definition}
\aliascntresetthe{definition}

\newaliascnt{question}{theorem}

\aliascntresetthe{question}

\newaliascnt{remark}{theorem}
\newtheorem{remark}[remark]{Remark}
\aliascntresetthe{remark}

\newaliascnt{example}{theorem}
\newtheorem{example}[example]{Example}
\aliascntresetthe{example}

\newtheorem*{notation*}{Notation}
\newtheorem*{theorem*}{Theorem}
\newtheorem*{conjecture*}{Conjecture}

\numberwithin{figure}{section}
\numberwithin{table}{section}
\numberwithin{equation}{section}

\allowdisplaybreaks

\newcommand{\bR}{\mathbb R}

\newcommand{\cG}{\mathcal G}
\newcommand{\cH}{\mathcal H}
\newcommand{\cI}{\mathcal I}

\newcommand{\cK}{\mathcal K}

\newcommand{\cM}{\mathcal M}

\newcommand{\cP}{\mathcal P}

\newcommand{\dd}{\mathrm{d}}
\newcommand{\gc}{\gamma}

\newcommand{\Diff}{\mathrm{Diff}}

\newcommand{\Evol}{\mathrm{Evol}}
\newcommand{\Isom}{\mathrm{Isom}}
\newcommand{\Kill}{\mathrm{Kill}}
\newcommand{\id}{\mathrm{id}}
\newcommand{\evol}{\mathrm{evol}}

\renewcommand{\H}{\mathcal{H}}

\def\on{\operatorname}

\newcommand{\todomartin}[1]{\todo[inline, color=green!40]{Martin: #1}}
\newcommand{\todolevin}[1]{\todo[inline, color=blue!40]{Levin: #1}}

\usepackage[capitalise,nameinlink]{cleveref}
\Crefname{theorem}{theorem}{theorems}
\Crefname{theorem}{Theorem}{Theorems}
\Crefname{lemma}{lemma}{lemmas}
\Crefname{lemma}{Lemma}{Lemmas}
\Crefname{proposition}{proposition}{propositions}
\Crefname{proposition}{Proposition}{Propositions}
\Crefname{corollary}{corollary}{corollaries}
\Crefname{corollary}{Corollary}{Corollaries}
\Crefname{conjecture}{conjecture}{conjectures}
\Crefname{conjecture}{Conjecture}{Conjectures}
\Crefname{assumption}{assumption}{assumptions}
\Crefname{assumption}{Assumption}{Assumptions}
\Crefname{definition}{definition}{definitions}
\Crefname{definition}{Definition}{Definitions}
\Crefname{question}{question}{questions}
\Crefname{question}{Question}{Questions}
\Crefname{remark}{remark}{remarks}
\Crefname{remark}{Remark}{Remarks}
\Crefname{example}{example}{examples}
\Crefname{example}{Example}{Examples}
\begin{document}

\title[On Periodic Geodesics of Half-Lie Groups]{On Periodic Geodesics of Half-Lie Groups}
\author{Martin Bauer}
\address{Department of Mathematics, Florida State University}
\email{mbauer2@fsu.edu}

\author{Levin Maier}
\address{Faculty of Mathematics and Computer Science,
	University of Heidelberg}
\email{lmaier@mathi.uni-heidelberg.de}

\begin{abstract}
A central program in infinite-dimensional Riemannian geometry is to understand which classical finite-dimensional principles remain valid beyond finite dimensions. In this article, we study the existence of periodic geodesics on Hilbert half-Lie groups equipped with strong right-invariant Riemannian metrics. Building on  recently established completeness results for this class of infinite-dimensional manifolds, we prove that every nontrivial free homotopy class contains a periodic geodesic whenever the fundamental group is nontrivial. We further establish a Lyusternik--Fet type theorem in this setting. Assuming a Palais--Smale condition modulo right translations and the existence of a nontrivial higher homotopy group, we prove the existence of a nonconstant contractible periodic geodesic. Thus, as in the finite-dimensional setting, both first and higher homotopy information continue to force the existence of periodic geodesics in infinite dimensions. In addition, we describe a reduction principle based on compact finite-dimensional totally geodesic submanifolds and apply our results to groups of Sobolev diffeomorphisms equipped with right-invariant Sobolev metrics.
\end{abstract}

\maketitle
\setcounter{tocdepth}{1}
\tableofcontents
\section{Introduction}

\paragraph{\bf Context:} 
Infinite-dimensional differential geometry has a long history that can be traced back to Riemann's Habilitationsschrift~\cite{Riemann2016}, where the idea of manifolds beyond finite dimensions already appears. Over the last decades, the subject has developed into a rich field of its own, driven both by intrinsic mathematical questions and by a growing range of applications. Prominent examples include Arnold's geometric interpretation of the Euler equations in hydrodynamics~\cite{Arnold66}, the Benamou--Brenier formulation of optimal transport~\cite{BenamouBrenier2000} together with Otto's Riemannian metric on spaces of densities~\cite{Otto2001}, elastic metrics in shape analysis~\cite{Younes1998, BauerBruverisMichor2014Overview,SrivastavaKlassen2016}, and diffeomorphic matching methods in computational anatomy~\cite{BegMillerTrouveYounes2005}. These developments have highlighted the need for a systematic understanding of geometric structures on infinite-dimensional manifolds and of the extent to which classical finite-dimensional results continue to hold in this broader setting.

The present article is part of a broader program aimed at understanding which features of classical finite-dimensional Riemannian geometry persist in infinite dimensions. Since the pioneering work of Eells, Elworthy, Omori and others (see e.g.~\cite{Palais68,Eells1966,Eliasson1967,La99,Omori1974,EellsElworthy1970} and the references therein) significant parts of Riemannian geometry have been extended to infinite dimensions, including the development of connections, geodesic equations, curvature, and variational methods. At the same time, it became clear that infinite-dimensional geometry exhibits phenomena with no finite-dimensional counterpart. Perhaps the most famous example is the failure of the Hopf--Rinow theorem, first observed by Atkin~\cite{HopfrinowfalseAktkin} and later extended by Ekeland~\cite{HopfrinowfalseEkeland}, who constructed complete infinite-dimensional Riemannian manifolds on which minimizing geodesics need not exist between arbitrary points. A second striking phenomenon arises for weak Riemannian and weak Finsler metrics, where the induced geodesic distance may vanish identically despite the infinitesimal metric being pointwise non-degenerate. In symplectic geometry, Eliashberg and Polterovich~\cite{EliashbergPolterovich1993} exhibited this phenomenon for the \(L^p\)-analogues of Hofer's metric on the Hamiltonian diffeomorphism group. Independently, Michor and Mumford~\cite{Michor_Mumford_vanishing_geodesic_distance} observed analogous vanishing phenomena for natural weak Riemannian metrics on shape spaces, and related examples have since appeared in a variety of mapping spaces and diffeomorphism groups~\cite{BauerHarmsPreston2020,BauerBruverisHarmsMichor2012,JerrardMaor2019}.  Consequently, whenever a classical geometric or variational argument relies on compactness, completeness, or metric properties that are automatic in finite dimensions, it becomes a nontrivial question whether an analogous statement remains valid in the infinite-dimensional setting.
%A second striking phenomenon arises for weak Riemannian metrics, where the induced geodesic distance may vanish identically despite the metric being pointwise non-degenerate; this was discovered by Eliashberg and Polterovich~\cite{EliashbergPolterovich1993}  for the $L^p$-Hofer metric on the symplectomorphism group and independently by Michor and Mumford in their study of shape spaces and has since been observed in a variety of mapping spaces and diffeomorphism groups.

Recently, the first author and collaborators initiated in~\cite{Bauer_2025} a systematic study of Hilbert half-Lie groups equipped with strong right-invariant Riemannian metrics. Here a half-Lie group is a smooth manifold that is also a topological group, but only right (left, resp.) translation is required to be smooth, whereas left (right, resp.) translation is merely required to be continuous~\cite{kriegl2015exotic,marquis2018half}. 
 This class includes, in particular, groups of Sobolev diffeomorphisms, which are among the most important examples of infinite-dimensional manifolds arising in geometry, hydrodynamics, and shape analysis. Although these spaces fail to be Lie groups in the classical sense, they retain enough structure to support a rich Riemannian geometry. As such, they provide a natural testing ground for the broader question of which results from finite-dimensional global Riemannian geometry survive in infinite dimensions.  A key outcome of this previous work was the establishment of a Hopf--Rinow-type theory for strong right-invariant metrics on Hilbert half-Lie groups. More precisely, it was shown that strong right-invariant metrics on Hilbert half-Lie groups are metrically and geodesically complete and that, under a mild additional closedness assumption, any two points in the same connected component can be connected by a minimizing geodesic. Thus, despite the well-known failure of the Hopf--Rinow theorem in general infinite-dimensional Riemannian geometry, half-Lie groups equipped with strong right-invariant metrics recover many of the completeness properties familiar from finite dimensions.

The existence of periodic geodesics is another central theme of global Riemannian geometry. Originating in the classical work of Hadamard~\cite{Hadamard1898} and Poincaré~\cite{Poincare1905} on geodesics on surfaces, and in Birkhoff's variational approach~\cite{Birkhoff1917}, the subject was developed throughout the twentieth century through the work of Lyusternik--Fet~\cite{LystFetThm51}, Gromoll--Meyer~\cite{GromollMeyer1969}, and Klingenberg~\cite{Klingenberg1978} into a basic meeting point of Riemannian geometry, topology, and Hamiltonian dynamics. In the compact finite-dimensional setting, classical results reveal a deep interplay between topology and geometry: nontrivial topology forces the existence of periodic geodesics through variational methods on the loop space. Two classical mechanisms for producing periodic geodesics are particularly important. On the one hand, minimizing the energy in a nontrivial free homotopy class yields periodic geodesics representing that class~\cite{Klingenberg1978}. On the other hand, minimax constructions of Lyusternik--Fet type produce contractible periodic geodesics from higher homotopical information~\cite{LystFetThm51,GromollMeyer1969,Klingenberg1978}. Building on the aforementioned Hopf--Rinow theory, the purpose of the present article is to investigate whether these topological mechanisms continue to operate in the setting of Hilbert half-Lie groups. Indeed, the main results of this article show that, under suitable assumptions, the classical relationship between topology and the existence of periodic geodesics persists in the setting of Hilbert half-Lie groups.
\subsection*{Contributions:}
Our first main result,~\Cref{thm: periodic geodesic in all homotopy classes of loops}, establishes the existence of periodic geodesics in nontrivial free homotopy classes of Hilbert half-Lie groups. More precisely, we show that if a Hilbert half-Lie group is equipped with a strong right-invariant metric satisfying the completeness assumptions developed in our previous work and if its fundamental group is nontrivial, then every nontrivial free homotopy class contains a periodic geodesic. In particular, every primitive homotopy class contains a prime periodic geodesic. Since right translations act by isometries, it follows that every infinite-dimensional half-Lie group satisfying these assumptions carries infinitely many geometrically distinct periodic geodesics. We present two different proofs of this result, each highlighting a different aspect of the theory. The first proof is geometric in nature and proceeds via the universal covering group. A nontrivial free homotopy class determines a nontrivial deck transformation, and a minimizing geodesic connecting the identity to this deck transformation projects to a periodic geodesic in the prescribed homotopy class. The second proof is variational and is based on a direct minimization argument for the energy functional within a fixed homotopy class. While the covering-space argument makes the geometric mechanism behind the result particularly transparent, the variational proof is closer in spirit to the methods underlying the Lyusternik–-Fet theorem and serves as a bridge to our later minimax arguments. Together, the two approaches provide complementary perspectives on the relationship between topology and periodic geodesics in the setting of half-Lie groups.

Our second main result, \Cref{thm:Lyusternik_half}, is a Lyusternik–-Fet type theorem for Hilbert half-Lie groups. Whereas the first theorem produces periodic geodesics in prescribed nontrivial free homotopy classes, the Lyusternik–-Fet theorem addresses the more subtle problem of finding contractible periodic geodesics. In finite dimensions, such geodesics arise from higher homotopical information via minimax constructions on the loop space. We show that an analogous mechanism persists in the setting of half-Lie groups: if the energy functional satisfies a suitable Palais--Smale condition modulo right translations and the underlying half-Lie group possesses a nontrivial higher homotopy group, then there exists a nonconstant contractible periodic geodesic. The proof follows the classical minimax philosophy. A nontrivial higher homotopy class of the loop space gives rise to a positive minimax level for the energy functional, and a general minimax principle yields a Palais--Smale sequence at this level. Under the Palais--Smale assumption, this sequence converges to a critical point of the energy, which corresponds to a contractible periodic geodesic. Consequently, under suitable compactness assumptions, higher-order topological information again forces the existence of periodic geodesics in infinite dimensions.

In addition to these fully infinite-dimensional results, we describe in~\Cref{prop_dyn_reduction} a complementary mechanism for producing periodic geodesics on half-Lie groups. Namely, we show that whenever a half-Lie group contains a compact finite-dimensional totally geodesic submanifold, classical finite-dimensional existence theorems immediately yield periodic geodesics in the ambient infinite-dimensional space. In this way, a large body of finite-dimensional closed geodesic theory can be transferred directly to the setting of half-Lie groups. This reduction principle is particularly useful in situations where the variational assumptions required by our infinite-dimensional existence theorems are difficult to verify.

The final part of the article is devoted to applications and examples. Our primary source of examples is provided by groups of Sobolev diffeomorphisms equipped with right-invariant Sobolev metrics. We show in \Cref{cor:infinitely many periodic geodesics sobolev diffeos} that strong Sobolev metrics satisfy the assumptions of our first existence theorem and therefore admit periodic geodesics in every nontrivial free homotopy class whenever the underlying diffeomorphism group has nontrivial fundamental group. As a consequence, many classical diffeomorphism groups, including diffeomorphism groups of spheres, tori, and certain circle bundles, carry infinitely many geometrically distinct periodic geodesics. We further demonstrate in \Cref{thm:periodic_geodesics_from_isometries} that periodic geodesics can also be obtained for a class of weak right-invariant Sobolev metrics. Here the argument proceeds via finite-dimensional totally geodesic submanifolds arising from symmetries of the underlying manifold. This illustrates how classical finite-dimensional closed geodesic theory can be used to generate periodic geodesics in infinite-dimensional half-Lie groups even in situations where the stronger variational framework is unavailable. Finally, while the Palais--Smale condition appearing in our Lyusternik–-Fet theorem is presently unknown for Sobolev diffeomorphism groups, we construct in~\Cref{cor:synthetic-Lyusternik} a genuinely infinite-dimensional example for which all assumptions can be verified. This shows that the theorem is not merely formal and provides a concrete setting in which higher homotopy groups force the existence of a contractible periodic geodesics.
\subsection*{Open Questions and Future Work:}
The most immediate open problem arising from the present work concerns the Palais--Smale condition appearing in our Lyusternik–-Fet theorem. While we provide a genuinely infinite-dimensional example for which this assumption can be verified, its validity for groups of Sobolev diffeomorphisms equipped with strong right-invariant Sobolev metrics remains unknown. Establishing such a compactness result would yield the existence of contractible periodic geodesics on a large and geometrically important class of infinite-dimensional manifolds.

A second natural direction concerns the extension of our results beyond the group setting. One of the key features of Hilbert half-Lie groups is that the group structure provides a rigid link between free homotopy classes and displacement in the universal cover. As a consequence, the minimization level associated with a nontrivial free homotopy class is automatically positive, which is the crucial ingredient underlying our existence theorem for periodic geodesics. For general strong Riemannian Hilbert manifolds, no such mechanism is available, and it remains unclear under which geometric or topological assumptions a nontrivial free homotopy class must carry positive energy. Understanding this question would be an important step toward a general existence theory for periodic geodesics on infinite-dimensional Riemannian manifolds.

More broadly, the results of the present article suggest that several finite-dimensional principles relating topology to global Riemannian geometry admit meaningful infinite-dimensional counterparts. A natural continuation would be to ask whether further finite-dimensional existence results for periodic geodesics, beyond the two mechanisms considered here, persist in this setting. This includes, for instance, multiplicity results, curvature- or index-theoretic criteria, and existence theorems based on more refined properties of the loop space. Determining which of these statements continue to hold in infinite dimensions, and identifying the compactness or geometric structures responsible for their validity, remains an intriguing problem for future research.

Finally, before leaving the purely Riemannian setting, one may ask whether the methods developed here extend to suitable strong Finsler metrics in infinite dimensions. From the variational point of view this is a natural intermediate step: the quadratic energy functional would be replaced by a fiberwise convex and coercive length or action functional, while the central issues would remain the positivity of the relevant action levels and an appropriate Palais--Smale compactness condition. In particular, reversible or Tonelli-type Finsler structures on Hilbert half-Lie groups would provide a natural testing ground for such an extension.

Even more generally, the broader aim is to understand periodic orbits of infinite-dimensional Hamiltonian systems. This would place the present results in closer analogy with finite-dimensional Hamiltonian dynamics, where the search for periodic orbits has been a central theme at the intersection of dynamics~\cite{AbrahamMarsden1978,MardsenRatiuMechanics,Arnold1989} and symplectic topology~\cite{HoferZehnderBook,McDuffSalamon2017}, and where min--max methods~\cite{Rabinowitz1986,Struwe2008}, KAM theory~\cite{Arnold1989}, Aubry--Mather theory~\cite{Sorrentino2015}, and Floer-theoretic techniques~\cite{McDuffSalamon2012,AudinDamian2014} have played a decisive role. In the present framework, a natural first step would be to investigate periodic orbits of Tonelli Hamiltonians, or of right-invariant magnetic systems at high energy, on Hilbert half-Lie groups. This can be viewed as a natural next step in the infinite-dimensional Aubry--Mather theory initiated in \cite{TonelliHalfLiegroups, HopfRinowHalfLiegroups}. Moreover, in analogy with \cite{Bauer_2025}, these systems satisfy a full Hopf--Rinow type theorem. Their study would therefore provide a natural testing ground for combining the compactness and variational methods developed there with the topological arguments introduced in the present article. An ultimate goal of this line of research would be an analogue of the celebrated almost-existence theorem of Hofer and Zehnder~\cite{HoferZehnder1987} and Struwe~\cite{Str90} in the present infinite-dimensional setting.
\subsection*{Acknowledgements}
M. Bauer was partially supported by NSF grant DMS-2526630 and by the Binational Science Foundation (BSF). L. Maier acknowledges funding from the Deutsche Forschungsgemeinschaft (DFG, German Research Foundation) – 281869850 (RTG 2229), 390900948 (EXC-2181/1), and 281071066 (TRR 191).

The authors made use of ChatGPT during the preparation of this article for proofreading, language refinement, and mathematical exploration. In particular, discussions with ChatGPT helped identify that a full Palais--Smale condition is incompatible with right-invariant metrics due to the action of translations, which led to the formulation of the weaker Palais--Smale condition modulo right translations used in this work. ChatGPT also  assisted in the development of the synthetic example presented in the final section. The authors take full responsibility for all mathematical statements, proofs, and conclusions contained in this article.

\section{Preliminaries}\label{s:preliminaries}
In this section, we present the basic notation and definitions of infinite-dimensional Riemannian geometry, which will be used throughout the rest of the article. For a more comprehensive introduction to the topic, we refer the reader to~\cite{La99,schmeding2022introduction,maor2022riemannian}. In particular, we will review a general minimax principle and a recent result concerning metric completeness and the existence of minimizing geodesics in an infinite-dimensional setting~\cite{Bauer_2025}.  
\subsection{Riemannian Metrics on Infinite-Dimensional Manifolds}
 We start by recalling the notion of a Riemannian metric 
in the infinite-dimensional setting. In contrast to the finite-dimensional setting we will distinguish between two different types of Riemannian metrics, which are referred to as weak and strong Riemannian metrics:
\begin{definition}[Weak and Strong Riemannian Metrics]
Let $\mathcal M$ be a smooth, infinite-dimensional manifold. 
A Riemannian metric $G$ on $\mathcal M$ is a smooth map $$G \colon T\mathcal M \times_{\mathcal M} T\mathcal M \to \bR$$ such that $G_x$ is an inner product on $ T_x\mathcal M$ for every $x \in \mathcal M$. The metric $G$ is called a \emph{strong Riemannian metric} if the inner product $G_x$ induces the manifold topology on $T_x\mathcal M$ for every $x \in M$, whereas it is called a \emph{weak Riemannian metric} if there exists $x\in \mathcal M$, such that the induced topology on $T_x\mathcal M$ is weaker than the natural manifold  topology.
\end{definition}
As the existence of a strong Riemannian metric necessarily implies that the underlying manifold is of Hilbert type, cf.~\cite{Bauer_2025}, and as we are mostly interested in strong Riemannian manifolds in this article, we can restrict our attention from here on to Hilbert manifolds. 

\subsection{Right--Invariant Metrics on Half--Lie Groups.} In this article we will focus on a particular class of infinite-dimensional manifolds, called half-Lie groups, which we introduce next:
\begin{definition}
A right (left) half-Lie group is a smooth manifold, possibly infinite-dimensional, whose underlying topological space is a topological
group, such that right (left) translations are smooth. 
\end{definition}
In analogy with the previous section, we will restrict our attention to those half-Lie groups that admit a Hilbert manifold structure. Furthermore, without loss of generality, we restrict our attention to right half-Lie groups, as the theory for left half-Lie groups is analogous, cf.~\cite{marquis2018half,Bauer_2025}. Next, we introduce the notion of $L^2$-regularity, which will be a crucial assumption for the main results of the present article:
\begin{definition}[Regular half-Lie Groups]\label{def:regular half Lie group}
Let $\mathcal G$ be a Hilbert half-Lie group.
We say that $\mathcal G$ is \emph{$L^2$–regular} if for every $X \in L^2_{\mathrm{loc}}(\bR, T_e \mathcal G)$ there exists a unique solution $ g \in W^{1,1}_{\mathrm{loc}}(\bR, \mathcal G)$ of the differential equation
\[
    \partial_t g(t) \;=\; T_e \mu_{g(t)}\, X(t), 
    \qquad g(0) = e.
\]
This solution is called the \emph{evolution} of $X$ and is denoted by $\mathrm{Evol}(X)$. 
Its evaluation at $t=1$ is denoted by $\mathrm{evol}(X)$.
\end{definition}
We note that this notion of regularity is rather mild and that, to the best of our knowledge, all natural examples of half-Lie groups satisfy this condition; this includes, in particular, groups of diffeomorphisms of finite Sobolev regularity.
Using this notion, we have the following result on completeness properties of strong right-invariant metrics on half-Lie groups:
\begin{theorem}[Completeness for half-Lie groups {\cite[Theorem 7.7]{Bauer_2025}}]\label{thm: completeness result half lie groupd}
    Let \( (\mathcal{G},G) \) be a Hilbert half-Lie group equipped with a strong right-invariant metric $G$.   Then $(\mathcal{G},G)$ is metrically and geodesically complete.
        Assume in addition that \( \mathcal{G} \) is \( L^2 \)-regular, and that for all \( x \in G \) the set
    \[
        A_{x} := \Bigl\{ \xi \in L^2([0,1], T_e G) \,\Big|\, \operatorname{evol}(\xi) = x \Bigr\}
    \]
    is weakly closed. Then there exists a $G$-minimizing geodesic between any two points in the same connected component of $(\mathcal{G},G)$.
\end{theorem}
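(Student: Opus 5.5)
Since this theorem is quoted from \cite[Theorem 7.7]{Bauer_2025}, the plan is to reconstruct its proof in three stages. Throughout we exploit that right translations are isometries acting transitively.

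\emph{Metric completeness.} Since $G$ is strong and right-invariant, the metric at $e$ is a Hilbert inner product $\langle\cdot,\cdot\rangle_e$ on $T_e\mathcal G$ inducing its topology. Take a chart around $e$ in which $G$ is uniformly equivalent to the flat metric on a ball. Then there is an $\varepsilon>0$ such that the closed metric ball $\bar B_\varepsilon(e)$ lies in this chart, where distances are comparable to norm distances. By right-invariance, $d(x,y)=d(xy^{-1},e)$, so every closed ball $\bar B_\varepsilon(x)=\bar B_\varepsilon(e)\,x$ has the same uniform structure. Given a Cauchy sequence $(x_n)$, choose $N$ with $d(x_n,x_N)<\varepsilon/2$ for $n\ge N$. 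Then $x_nx_N^{-1}$ is Cauchy inside the chart at $e$, because the chart metric is bi-Lipschitz to $d$ there. Hence it converges by completeness of the Hilbert space, and right-translating back gives convergence of $(x_n)$.

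\emph{Geodesic completeness.} The spray is smooth because $G$ is strong, so there is a uniform $\delta>0$ such that every geodesic starting at $e$ with unit speed exists on $(-\delta,\delta)$. Right translations map geodesics to geodesics, so the same $\delta$ works at every point. This uniform existence time lets any geodesic be extended indefinitely. Alternatively, one could use metric completeness together with the standard Lang-type argument, but the uniform-time argument is simpler here.

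\emph{Existence of minimizers.} The plan is a direct method on the control space. For $x$ in the identity component, $d(e,x)^2$ equals $\inf\{\|\xi\|_{L^2}^2:\xi\in A_x\}$. To see this, reparametrize to constant speed, use that $g\mapsto \xi=\partial_t g\, g^{-1}$ maps $W^{1,2}$ paths to $L^2$ controls with energy $\int\|\xi\|_e^2$ by right-invariance, and use $L^2$-regularity for the inverse map. A minimizing sequence $\xi_n$ is bounded in $L^2$, hence has a weakly convergent subsequence $\xi_n\rightharpoonup\xi$. Weak closedness of $A_x$ gives $\xi\in A_x$, and weak lower semicontinuity of the norm gives $\|\xi\|^2\le d(e,x)^2$. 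Therefore $\mathrm{Evol}(\xi)$ is a minimizing path, and a standard first-variation argument, applied locally in charts where the metric is smooth, shows it is a geodesic. For a general pair $y,x$, right-translate by $y^{-1}$.

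The main obstacle is the identification of the distance with the control infimum. This requires checking that piecewise $C^1$ paths and $W^{1,2}$ paths give the same infimum, and that Evol is defined for arbitrary $L^2$ controls. $L^2$-regularity is exactly the hypothesis that supplies the second point.
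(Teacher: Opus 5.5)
Your proposal is correct and follows essentially the same route as the cited result. The paper itself only quotes \cite[Theorem 7.7]{Bauer_2025} and remarks that weak closedness of $A_x$ enters solely through the direct method, which it reproduces in the proof of \Cref{Lemm: universal cover geodesic convex}. As you do, right-invariance transports the local uniform structure at $e$ to give metric and geodesic completeness, and minimizers come from weak compactness of $L^2$ controls together with weak closedness of $A_x$. The one step worth making explicit is the uniform existence time for unit-speed geodesics: it follows from local existence near the zero vector combined with the rescaling $c_{sv}(t)=c_v(st)$, since the unit sphere in $T_e\mathcal G$ is not compact.
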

Recently, a similar completeness result has been obtained for Hilbert manifolds, which are open subsets of a Hilbert space, assuming a certain control of the Riemannian metric in terms of the Hilbert norm, cf.~\cite[Section~2]{bauer2025completeness}.
Furthermore, we note that, in the infinite-dimensional setting considered in the present article, the Hopf–Rinow theorem famously fails~\cite{Atkin97,HopfrinowfalseEkeland} and thus the existence of minimizing geodesics is not an automatic consequence of metric completeness but had to be proven separately.  Indeed, the assumption on the weak closedness of the set $A_x$ is not needed for metric and geodesic completeness, but only for the proof of the existence of minimizers statement, which is based on the direct method of calculus of variations. 
\subsection{The Loop Space of a Strong Riemannian Manifold}
The main results of this article establish the existence of periodic
geodesics in the settings introduced in the preceding two subsections. In
proving these results, we will need to consider spaces of loops with
values in an infinite-dimensional Riemannian manifold \(\cM\).  We write
\[
    \Lambda\cM := H^1(S^1,\cM)
\]
for the free loop space of \(\cM\), where $H^1(S^1,\cM)$ denotes the space of Sobolev functions on $S^1$ with values in $\cM$. For a free homotopy class
\(\alpha\in\pi_1(\cM)\), we denote by
\[
    \Lambda_\alpha\cM
    :=
    \left\{
    \gamma\in H^1(S^1,\cM)
    \;\middle|\;
    [\gamma]=\alpha
    \right\}
\]
the corresponding connected component of the free loop space. In particular,
for the trivial class \(\alpha=[0]\), we obtain the space of contractible
\(H^1\)-loops, which we denote by
\[
    \Lambda_0\cM := \Lambda_{[0]}\cM .
\]
If \(x_0\in\cM\) is fixed, we write
\[
    \Omega_{x_0}\cM
    :=
    \left\{
    \gamma\in H^1(S^1,\cM)
    \;\middle|\;
    \gamma(0)=\gamma(1)=x_0
    \right\}
\]
for the based loop space at \(x_0\).

In the following theorem, we show that the free loop space can be equipped
with a complete Riemannian metric.
\begin{theorem}[Metric completeness of free loop space]\label{thm:loop_space_is_complete}
Let \((\cM,G)\) be a metrically complete strong Riemannian manifold. Equip
\(\Lambda\cM\) with the natural \(H^1\)-metric \(\bar G\) induced by
\(G\), i.e.
\[
    \bar G_{\gamma}(\delta\gamma,\delta\gamma)
    =
    \int_{S^1}
    G_{\gamma(t)}(\delta\gamma(t),\delta\gamma(t))\, \mathrm{d}t +\int_{S^1}
    G_{\gamma(t)}(\nabla_{\partial_t} \delta\gamma(t),\nabla_{\partial_t} \delta\gamma(t))\,\mathrm{d}t ,
\]
for $\delta\gamma\in T_\gamma\Lambda\mathcal M$, where $\nabla_{\partial_t} \delta\gamma$ denotes the covariant derivative along $\gamma$ induced by the Levi-Civita connection of $G$. 
Then \((\Lambda\cM,\bar G)\) is a metrically complete strong
Riemannian manifold. Consequently, each connected component
\(\Lambda_\alpha\cM\), \(\alpha\in\pi_1(\cM)\), is complete with respect to
the induced \(H^1\)-metric; in particular, so is
\(\Lambda_0\cM\).
\end{theorem}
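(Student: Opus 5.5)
This is the infinite-dimensional version of Palais' classical result that $H^1(S^1,M)$ is complete when $M$ is a complete Riemannian manifold. I would first recall why $\Lambda\cM$ is a smooth Hilbert manifold. Charts come from exponential maps along smooth loops: for a smooth loop $\gamma$, the map $\xi\mapsto \exp_\gamma(\xi)$ is defined on a neighbourhood of zero in $H^1(S^1,\gamma^*T\cM)$. It is well defined because $\exp$ is smooth on a neighbourhood of the zero section of a strong Riemannian Hilbert manifold, and $H^1\hookrightarrow C^0$ in one dimension. The metric $\bar G$ is then strong, since in such a chart it is equivalent to the $H^1$ Hilbert norm of $\gamma^*T\cM$. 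The substance of the theorem is metric completeness. The statement about components follows at once, because each $\Lambda_\alpha\cM$ is open and closed in $\Lambda\cM$. Loops that are $C^0$-close are freely homotopic, and $d_{\bar G}$-convergence will be shown to imply uniform convergence.

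For completeness I would take a $d_{\bar G}$-Cauchy sequence $(\gamma_n)$ and proceed in three steps.

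\emph{Step 1: pointwise and uniform control.} For a path $s\mapsto c(s)\in\Lambda\cM$ with variation field $\delta c$, the Sobolev inequality on $S^1$ gives
\[
\sup_t \|\delta c(s,t)\|_{G}\le C\,\bar G(\delta c,\delta c)^{1/2}.
\]
Its proof only differentiates $\|\delta c\|^2$ in $t$, using metric compatibility of $\nabla$. Integrating in $s$ yields $\sup_t d_G(c_0(t),c_1(t))\le C\, d_{\bar G}(c_0,c_1)$. Hence $(\gamma_n)$ is uniformly Cauchy in the complete metric space $(\cM,d_G)$, and it converges uniformly to a continuous loop $\gamma$.

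\emph{Step 2: energy bounds.} I would show that $E(c)=\tfrac12\int\|\partial_t c\|^2$ satisfies
\[
\bigl|\partial_s \sqrt{2E(c(s))}\bigr|\le \|\nabla_{\partial_s}\partial_t c\|_{L^2}=\|\nabla_{\partial_t}\delta c\|_{L^2}\le \bar G(\delta c,\delta c)^{1/2},
\]
using the symmetry $\nabla_s\partial_t c=\nabla_t\partial_s c$ of the Levi-Civita connection. So $\sqrt{E}$ is Lipschitz for $d_{\bar G}$, and the energies of the $\gamma_n$ are uniformly bounded. Steps 1 and 2 together place the tail of the sequence, and the segments joining its terms, inside a uniform $C^0$-neighbourhood of the compact set $\gamma(S^1)$ with uniformly bounded energy.

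\emph{Step 3: convergence in a chart.} Next I would approximate $\gamma$ uniformly by a smooth loop $\eta$, chosen so that $\gamma$ stays within a small distance $r$ of $\eta$. Since $\gamma(S^1)$ is compact, there is a uniform $r>0$ such that $\exp_{\eta(t)}$ is a diffeomorphism on the $r$-ball, with uniform bounds on $\exp$ and its first two derivatives along $\eta$. This uniformity comes from continuity of $\exp$ and its derivatives together with a compactness argument along $\eta$, which still works in infinite dimensions because $\eta(S^1)$ is compact. For large $n$, write $\gamma_n=\exp_\eta(\xi_n)$. The chart map $\xi\mapsto\exp_\eta\xi$ is then bi-Lipschitz between the flat $H^1$ norm on $\{\|\xi\|_{C^0}<r,\ \|\xi\|_{H^1}\le R\}$ and $\bar G$. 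For this one uses that $\partial_t\exp_\eta\xi=D_1\exp\cdot\dot\eta+D_2\exp\cdot\nabla_t\xi$, and that the covariant derivative of the variation field involves second derivatives of $\exp$, which are bounded. It follows that $(\xi_n)$ is Cauchy in the Hilbert space $H^1(S^1,\eta^*T\cM)$. Its limit $\xi$ gives $\gamma=\exp_\eta\xi\in\Lambda\cM$, and $d_{\bar G}(\gamma_n,\gamma)\to0$.

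The main obstacle is Step 3. Two things are needed there: the uniform chart estimates in infinite dimensions, which require care with second derivatives of $\exp$ and with the curvature terms in $\nabla_t$ of the variation field, and a proof that near-minimizing $\bar G$-paths between the $\gamma_n$ stay inside the chart. Steps 1 and 2 are designed to provide exactly this localisation, so that the Cauchy condition passes from $d_{\bar G}$ to the flat $H^1$ norm.
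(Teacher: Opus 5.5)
Your proposal is correct. It is the classical intrinsic argument: uniform convergence from the one-dimensional Sobolev bound, $d_{\bar G}$-Lipschitz continuity of $\sqrt{E}$ to keep short paths localized, and an exponential chart centred at a nearby smooth loop. The paper does not write this argument out; it only asserts that the finite-dimensional proof in Abbondandolo's lectures carries over, and your version shows why, since it uses compactness only of loop images and never of $\cM$ or its balls.

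One detail in Step 3 should be stated in the right order. First obtain $r$ (with $\exp_x$ mapping the $r$-ball diffeomorphically onto the metric $r$-ball, together with the derivative bounds) on a whole metric neighbourhood of $\gamma(S^1)$, using a finite subcover. Only then pick $\eta$ inside that neighbourhood, because in infinite dimensions $\gamma(S^1)$ has no compact neighbourhood on which to run the compactness argument.
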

We note that this result is well known in finite dimensions; however, to the best of our knowledge, it has not yet been explicitly stated in the infinite-dimensional setting. The proof is, however, essentially identical to the finite-dimensional one; see, for example,~\cite[§2]{Abbo13Lect}.
% \subsection{Ekeland’s variational principle}
% Next, we recall Ekeland’s variational principle~\cite{EKELAND1974324}:  \begin{theorem}[Ekeland variational principle, {\cite[Thm. 1.1]{EKELAND1974324}}]\label{thm:ekeland}
% Let $(X,d)$ be a complete metric space and let 
% $F:X\to \mathbb{R}\cup\{+\infty\}$ be lower semicontinuous, bounded from below, and not identically $+\infty$.
% Then for every $\varepsilon>0$ and every $x_0\in X$ with
% \[
% F(x_0)\le \inf_{x\in X} F(x) + \varepsilon,
% \]
% Then for every $\delta>0$ there exists $x_\varepsilon\in X$ satisfying:
% \begin{enumerate}[label=(\Roman*)]
% \item $F(x_\varepsilon)\le F(x_0)$;
% \item $d(x_\varepsilon,x_0)\le \varepsilon$ (after rescaling $d$ if desired);
% \item For all $x\neq x_\varepsilon$,
% \[
% F(x_\varepsilon) < F(x) + \frac{\varepsilon}{\delta}\, d(x,x_\varepsilon).
% \]
% \end{enumerate}
% If $F$ is Fr\'echet differentiable on a Banach manifold with a $C^1$ Riemannian structure, then $$\|\nabla F(x_\varepsilon)\|\le \varepsilon.$$
% \end{theorem}

\subsection{General Minimax Principle}
Next, we recall the general minimax principle, which is a useful tool for obtaining Palais--Smale sequences. We begin by recalling the notion of a Palais--Smale sequence.
\begin{definition}\label{def:def_and_cond_PS}
Let $F\in C^1(\cM,\bR)$. A sequence $(x_n)_{n \in \mathbb{N}} \subset \cM$ is called a Palais--Smale sequence of $F$ at level $c$ (or simply a $(PS)_c$ sequence) if
\[
\lim_{n\to\infty} F(x_n) = c
\quad\text{and}\quad
\lim_{n\to\infty} \dd F(x_n) = 0.
\]
The function $F$ is said to satisfy the $(PS)_c$ condition if every $(PS)_c$ sequence admits a convergent subsequence. It is said to satisfy the $(PS)$ condition if it satisfies the $(PS)_c$ condition for every $c \in \bR$.
\end{definition}

Next, we state the general minimax principle, which guarantees the existence of Palais--Smale sequences on a complete Riemannian Hilbert manifold. For a proof, we refer to \cite{Abbo13Lect} and the references therein.

\begin{theorem}[General Minimax Principle]\label{thm:General_Minimax_Principle}
Let $F$ be a $C^{1,1}$ function on a complete Riemannian Hilbert manifold $(\cM,G)$, and let $\Gamma$ be a collection of subsets of $\cM$ which is positively invariant with respect to the negative gradient flow of $F$. If the number
\[
c = \inf_{\gamma \in \Gamma}\, \sup_{\gamma} F
\]
is finite, then $F$ admits a $(PS)_c$ sequence. In particular, if $F$ satisfies the $(PS)_c$ condition, then $c$ is a critical value of $F$.
\end{theorem}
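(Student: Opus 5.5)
The statement here is the General Minimax Principle, \Cref{thm:General_Minimax_Principle}. I would prove it by contradiction, using a deformation argument along the negative gradient flow of $F$.

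\textbf{Setup.} Suppose $F$ admits no $(PS)_c$ sequence. Then there exist $\varepsilon>0$ and $\delta>0$ such that
\[
\|\nabla F(x)\|_G\ge\delta \quad\text{for all } x\in F^{-1}([c-\varepsilon,c+\varepsilon]).
\]
Otherwise we could choose $x_n$ with $|F(x_n)-c|<1/n$ and $\|\dd F(x_n)\|<1/n$, which is a $(PS)_c$ sequence.

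\textbf{Step 1: the flow.} Since $F$ is $C^{1,1}$, the gradient $\nabla F$ is locally Lipschitz. Hence the negative gradient flow $\phi_t$, given by $\partial_t\phi_t(x)=-\nabla F(\phi_t(x))$, exists locally and is unique. Along a trajectory,
\[
\tfrac{\dd}{\dd t}F(\phi_t(x))=-\|\nabla F\|^2 .
\]
Consequently, on any interval where the trajectory stays in the strip $F^{-1}([c-\varepsilon,c+\varepsilon])$, its length is at most $\int\|\nabla F\|\,\dd t\le \varepsilon'/\delta$, where $\varepsilon'$ denotes the drop in $F$ over that interval.

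\textbf{Step 2: existence of the flow up to the needed time (main obstacle).} The key issue is that the flow might cease to exist before $F$ has dropped far enough. Metric completeness of $(\cM,G)$ handles this: if a maximal trajectory blew up at a finite time $T$ while $F$ stayed above $c-\varepsilon$, it would have finite length. Its points would then form a Cauchy sequence converging in $\cM$, and by local existence around the limit point the trajectory could be continued, a contradiction. Outside the strip no such control is needed, since we only care about points with $F\le c+\varepsilon$. I would make this rigorous by the standard time-change trick: replace $\nabla F$ by $\nabla F/\max(1,\|\nabla F\|^2)$ (or a cutoff version), which is bounded and locally Lipschitz and gives a globally defined flow on the complete manifold. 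This normalized field still decreases $F$ at rate at least $\min(\delta^2,1)$ on the strip. Positive invariance of $\Gamma$ should be read with respect to this reparametrized flow, which has the same trajectories. I regard this completeness and reparametrization step as the only delicate point.

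\textbf{Step 3: deformation and contradiction.} By Step 2 there is a time $T$ such that $\phi_T$ maps $\{F\le c+\varepsilon\}$ into $\{F\le c-\varepsilon\}$: every point of the strip loses at least $2\varepsilon$ in $F$ by time $T$ unless it has already left the strip downward. Pick $A\in\Gamma$ with $\sup_A F\le c+\varepsilon$. By positive invariance, $\phi_T(A)\in\Gamma$, and $\sup_{\phi_T(A)}F\le c-\varepsilon<c$. This contradicts the definition of $c$ as the infimum.

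\textbf{Conclusion.} Hence a $(PS)_c$ sequence exists. If in addition $(PS)_c$ holds, a subsequence converges to some $x$. By continuity of $F$ and $\dd F$ we get $F(x)=c$ and $\dd F(x)=0$, so $c$ is a critical value of $F$.
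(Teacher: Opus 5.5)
The paper does not prove this statement; it only refers to \cite{Abbo13Lect}. Your deformation argument is the standard proof. The setup, the estimates of Step~1, the deformation in Step~3 and the final ``in particular'' are all fine. The one step that does not follow from the hypothesis as stated is the end of Step~2.

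\textbf{The gap.} After replacing $-\nabla F$ by the bounded field $-\nabla F/\max(1,\|\nabla F\|^2)$ with flow $\psi_t$, you say that positive invariance of $\Gamma$ ``should be read'' with respect to $\psi_t$, since it has the same trajectories. Having the same trajectories is not enough. We have $\psi_T(x)=\phi_{\sigma_T(x)}(x)$ with a point-dependent time $\sigma_T(x)$, so $\psi_T(A)$ is in general not of the form $\phi_s(A)$ for any $s\ge0$. The hypothesis $\phi_s(A)\in\Gamma$ therefore says nothing about $\psi_T(A)$, and as written you prove the theorem under a different invariance assumption.

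Relatedly, the remark that ``outside the strip no such control is needed'' is not correct. Points of $A$ with $F<c-\varepsilon$ must also be flowed up to time $T$ for $\phi_T(A)$ to be defined. There the unmodified flow can blow up when $F$ is unbounded below and $\nabla F$ is only locally Lipschitz. For example, $F(x)=-x^4$ on $\mathbb{R}$ gives $\dot x=4x^3$, which explodes in finite time.

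\textbf{The repair.} Drop the reparametrization; there are two ways to justify this.
\begin{itemize}
\item Read positive invariance as including that $\phi_t$ is defined on every $A\in\Gamma$ for all $t\ge0$, which is needed for the hypothesis to make sense. Then Step~3 works verbatim with $\phi_T$ and $T=2\varepsilon/\delta^2$, and Step~2 is unnecessary.
\item Alternatively, derive this forward completeness when $F$ is bounded below, which is the case in the paper since $E\ge0$. Along a forward trajectory,
\[
\int_0^t\|\nabla F(\phi_s x)\|\,\dd s\le\sqrt{t\,\bigl(F(x)-\inf F\bigr)} .
\]
So a trajectory with finite maximal existence time would have finite length. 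By completeness it would converge, and uniform local existence near the limit would let you continue it, a contradiction.
\end{itemize}
If you want to keep the bounded field, which is needed when $F$ is unbounded below, you must state invariance under $\psi_t$ as an explicit hypothesis. It does hold, for instance, for the homotopy-defined families used later in the paper, since these are invariant under every continuous deformation homotopic to the identity.
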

\subsection{Groups of Sobolev Diffeomorphisms Equipped with Right-Invariant  Metrics}\label{ssec:back:diffeo}
Finally, we review a particular class of half-Lie groups: groups of Sobolev diffeomorphisms equipped with right-invariant Sobolev metrics. Interest in these geometries is motivated by their role in several application-oriented areas, such as mathematical shape analysis~\cite{younes2010shapes,micheli2013sobolev,BauerBruverisMichor2014Overview} and geometric hydrodynamics~\cite{AK98,Vi08}. In~\Cref{sec:examples}, we will use them to demonstrate the applicability of the developed theory.

We denote by \((M,g)\) a closed
finite-dimensional Riemannian manifold and by \(\Diff^s(M)\) the group of
Sobolev diffeomorphisms, which for \(s>\frac{\on{dim}(M)}{2}+1\) is defined as
\[\Diff^s(M):=\left\{\varphi\in H^s(M,M): \varphi\text{ is a $C^1$-diffeomorphism}\right\}.\]
It is well known that
\(\Diff^s(M)\) is a smooth Hilbert manifold and a topological group~\cite{EM70}. To define a right-invariant Riemannian metric on $\Diff^s(M)$, we consider a positive, elliptic, self-adjoint, pseudodifferential operator $L$ of order $2r$. Under these assumptions~$L$ defines an inner product
on
\[
T_{\mathrm{id}}\Diff^s(M)=\mathfrak X^s(M)
\]
by
\begin{equation}\label{eq:def_H_r_metric_id}
G^L_{\mathrm{id}}(u,v):=
\int_M g(u,Lv)\,\mathrm{dvol}_g
\qquad
\forall\,u,v\in \mathfrak X^s(M).
\end{equation}
Extending this inner product by right translations gives the right-invariant
Riemannian metric
\begin{equation}\label{eq:def_H_r_metric}
G^L_{\varphi}(X_{\varphi},Y_{\varphi})
:=
\int_M
g\bigl(
X_{\varphi}\circ\varphi^{-1},
L(Y_{\varphi}\circ\varphi^{-1})
\bigr)\,\mathrm{dvol}_g,
\end{equation}
called an \emph{\(H^r\)-metric} on \(\Diff^s(M)\). Equivalently, this metric is
represented by the operator
\[
L_{\varphi}
=
R_{\varphi^{-1}}^{*}\circ L\circ R_{\varphi^{-1}},
\qquad
R_{\varphi}X:=X\circ\varphi .
\]
We emphasize that since \(\varphi\in\Diff^s(M)\) is only a Sobolev diffeomorphism, the right-translated inertia operator
\(L_{\varphi}\) is no longer a pseudodifferential operator with smooth symbol
in the classical sense. Consequently, the smoothness of the Riemannian metric is not guaranteed for a general inertia operator. For $L$ a  positive, elliptic, self-adjoint, pseudodifferential operators  of order $2r$ the properties of the induced metric have been studied in detail in a series of papers~\cite{misiolek2010fredholm,escher2014right,bauer2015local,BruverisVialard2017,bauer2020well}. In the following theorem we summarize the main properties, which will be of importance in Section~\ref{sec:examples}:
\begin{theorem}[Properties of Sobolev metrics on $\Diff^s(M)$~\cite{BruverisVialard2017,bauer2020well}]\label{thm:propertiesSobmetrics}
Let $(M,g)$ be either a closed Riemannian manifold or $\mathbb R^d$, let \(s>\frac{\on{dim}(M)}{2}+1\), let $0\leq r\leq s$ and let $L$ be a positive, elliptic, self-adjoint, pseudodifferential operator  of order $2r$ acting on $\mathfrak X^s(M)$. We have
\begin{enumerate}
\item $G^L$, defined via \eqref{eq:def_H_r_metric_id}, is a smooth Riemannian metric on $\Diff^s(M)$, which is weak if $r<s$ and strong if $r=s$. 
\item For any $r>\frac{1}{2}$ the geodesic equation of $G^L$ on $\Diff^s(M)$ is locally well-posed in the sense of Hadamard. 
\item For $s=r$ the space $(\Diff^s(M), G^L)$ is geodesically and metrically complete and for any $\varphi_1,\varphi_2\in\Diff(M)$ there exists a minimizing geodesic in $\Diff^s(M)$  connecting them. 
\end{enumerate}\end{theorem}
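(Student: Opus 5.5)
The three items are of rather different nature, so I would prove them separately. Items (1) and (2) are local statements about the metric and its spray on $\Diff^s(M)$. Item (3) is global and, by the remarks after \Cref{thm: completeness result half lie groupd}, should reduce to that theorem once its hypotheses are checked for $\Diff^s(M)$.

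For (1), I would start from the fact that $\Diff^s(M)$ is a Hilbert half-Lie group~\cite{EM70}: right translation $R_\varphi$ is smooth, whereas $\varphi\mapsto\varphi^{-1}$ and left translation are only continuous. The metric is represented by $L_\varphi=R_{\varphi^{-1}}^*\circ L\circ R_{\varphi^{-1}}$, and the inverse appearing here is exactly what could destroy smoothness. So the core of (1) is to show that the conjugated family
\[
\varphi\mapsto R_\varphi\circ L\circ R_{\varphi^{-1}}
\]
is a smooth map from $\Diff^s(M)$ into $\mathcal L\bigl(H^s(M,TM),H^{s-2r}(M,TM)\bigr)$. For differential operators $L$ this follows from writing the conjugated operator in local coordinates, where only derivatives of $\varphi$ and polynomial expressions in them occur. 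For general pseudodifferential $L$ I would follow the approach of~\cite{bauer2020well}: compute the derivatives of the conjugated family as iterated commutators $[L,u\cdot\nabla]$, and bound each commutator using the pseudodifferential symbol calculus. Strongness for $r=s$ then follows because $\langle u,Lu\rangle$ is equivalent to the $H^s$-norm on $\mathfrak X^s(M)$ by ellipticity and positivity, and right translation is a bounded isomorphism of $H^s$. For $r<s$ the induced norm is the $H^r$-norm, which is strictly weaker, so the metric is weak.

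For (2), I would write the geodesic equation as the Euler--Arnold equation
\[
u_t+\operatorname{ad}_u^{\top}u=0,\qquad u=\varphi_t\circ\varphi^{-1},
\]
and then pass to Lagrangian coordinates, where it becomes the second-order ODE $\varphi_{tt}=S_\varphi(\varphi_t)$. The key step is to show that the spray
\[
S_\varphi=R_\varphi\circ S\circ R_{\varphi^{-1}}
\]
is a smooth vector field on $T\Diff^s(M)$, for which the same commutator estimates as in (1) are needed. For $r>\tfrac12$ these estimates show that $L^{-1}\bigl[L,\nabla_u\bigr]$ gains enough regularity to cancel the apparent loss of one derivative. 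With smoothness of $S$ in hand, local well-posedness in the sense of Hadamard follows from the Picard--Lindelöf theorem on the Hilbert manifold $T\Diff^s(M)$.

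For (3), metric and geodesic completeness follow directly from \Cref{thm: completeness result half lie groupd}, since by (1) the metric is strong and right-invariant. For the existence of minimizers I would verify the remaining two hypotheses.
\begin{enumerate}
\item $L^2$-regularity: this is the flow theorem for $L^2_t H^s_x$ vector fields, which I would prove via a Gronwall argument in $H^s$.
\item Weak closedness of $A_x$: I would show that if $\xi_n\rightharpoonup\xi$ weakly in $L^2([0,1],\mathfrak X^s)$, then $\operatorname{evol}(\xi_n)\to\operatorname{evol}(\xi)$ in $C^0$. First, uniform $L^2H^s$ bounds give uniform $H^s$ bounds on the flows. Compact Sobolev embeddings then let me pass to the limit in the integral form of the flow equation. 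Since the limit of flows is the flow of the limit, the condition $\operatorname{evol}(\xi_n)=x$ persists in the limit, so $A_x$ is weakly closed.
\end{enumerate}
This is the argument of~\cite{BruverisVialard2017}.

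I expect the main obstacle to be the smoothness of the conjugated family $R_\varphi\circ L\circ R_{\varphi^{-1}}$, on which both (1) and (2) depend. For a genuinely pseudodifferential $L$ there is no coordinate formula, and the commutator estimates must be carried out at low regularity of $\varphi$.
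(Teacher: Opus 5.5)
The paper gives no proof of this statement; it is quoted from Bruveris--Vialard and Bauer--Bruveris--Cismas--Escher--Kolev. Your outline is a fair reconstruction of those sources. Routing (3) through the completeness theorem for half-Lie groups, via $L^2$-regularity and weak continuity of $\operatorname{evol}$, is legitimate, and it is how the paper itself later checks the hypotheses for $\Diff^s(M)$.

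\textbf{The gap in (1).} Your core lemma fails exactly in the strong case $r=s$, which is the case (3) and the rest of the paper depend on. For $\varphi\in\Diff^s(M)$, right translation $R_\varphi$ acts boundedly on $H^\sigma$ in general only for $1-s\le\sigma\le s$. Its $L^2$-transpose is $h\mapsto\operatorname{Jac}(\varphi^{-1})\,(h\circ\varphi^{-1})$, and multiplication by $\operatorname{Jac}(\varphi^{-1})\in H^{s-1}$ does not preserve $H^s$.
\begin{itemize}
\item Hence for $r>s-\frac12$, and in particular for $r=s$ where $L(v\circ\varphi^{-1})\in H^{-s}$, the operator $R_\varphi\circ L\circ R_{\varphi^{-1}}\colon H^s\to H^{s-2r}$ is not defined.
\item This already happens for a differential operator $L$ of order $2s$: its conjugate contains $2s$ derivatives of $\varphi$, which an $H^s$-diffeomorphism does not have.
\end{itemize}
The missing idea is to keep the metric in weak (bilinear) form. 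One option is to work with $L_\varphi=R^*_{\varphi^{-1}}\circ L\circ R_{\varphi^{-1}}$, exactly as the paper writes it; the transpose $R^*_{\varphi^{-1}}$ is bounded on $H^{-s}$. The other is to factor $L=B^*B$ with $B=L^{1/2}$ elliptic of order $r$, so that
\[
G^L_\varphi(X,Y)=\int_M g\bigl(B_\varphi X,B_\varphi Y\bigr)\operatorname{Jac}(\varphi)\,\mathrm{dvol}_g,
\qquad
B_\varphi:=R_\varphi\circ B\circ R_{\varphi^{-1}}\in\mathcal L\bigl(H^s,H^{s-r}\bigr).
\]
Now $s-r\ge0$, so $R_\varphi$ is harmless, and your iterated-commutator argument, applied to $B$ instead of $L$, can go through.

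\textbf{The same problem in (2).} At $r=s$ the Eulerian term $\operatorname{ad}^\top_u u$ is in general not in $H^s$ for $u\in H^s$. Indeed $\langle L\operatorname{ad}^\top_u u,w\rangle_{L^2}=\langle Lu,[u,w]\rangle_{L^2}$, with $[u,w]$ only in $H^{s-1}$, and this is not bounded by $\|w\|_{H^s}$. Only the Lagrangian combination $\nabla_u u-\operatorname{ad}^\top_u u$ lies in $H^s$. That comes from an integration-by-parts cancellation which termwise commutator bounds do not see.
\begin{itemize}
\item In the strong case, take smoothness of the spray from the general fact that a smooth strong metric on a Hilbert manifold has a smooth spray.
\item Keep the Euler--Arnold/commutator route for the weak range. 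There, termwise estimates (products of $\nabla u\in H^{s-1}$ with $Lu\in H^{s-2r}$) are available only up to $r\le s-\frac12$. The band $s-\frac12<r<s$ would require exhibiting that cancellation directly.
\end{itemize}

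\textbf{Two smaller points.}
\begin{itemize}
\item On $\mathbb R^d$ the embedding $H^s\hookrightarrow H^{s_0}$ is not compact. In the weak-closedness step, use Arzel\`a--Ascoli on compact sets instead; this suffices to identify the limit flow pointwise.
\item (3) only yields minimizers between points in the same connected component, and the statement must be read that way. For instance, $\Diff^s(S^1)$ has two components.
\end{itemize}
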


\begin{example}[Inertia operator defined via (fractional) powers of the Laplacian]\label{ex:laplace_inertia}
As a basic example, let \(\Delta_g=-\nabla^*\nabla\) denote the (connection) Laplace operator
induced by the Riemannian metric \(g\). Then for any $0\leq r\leq s$ the operator
\[
    L=(1-\Delta_g)^r
\]
is an admissible inertia operator of order $2r$ and the induced Riemannian metric is given at the identity via:
\begin{equation}
G^L_{\mathrm{id}}(u,v)=
\int_M g(u,(1-\Delta_g)^rv)\,\mathrm{dvol}_g
=\int_M g((1-\Delta_g)^{\frac r2}u,(1-\Delta_g)^{\frac r2}v)\,\mathrm{dvol}_g.
\end{equation}
\end{example}

\section{On periodic Geodesics of Half-Lie groups}\label{sec:halfLie}
In this section we prove the main results of the present article; namely that half--Lie groups \( \cG \) with nontrivial homotopy groups equipped with strong right-invariant metrics carry periodic geodesics.  The main statements of this section are contained in \Cref{thm: periodic geodesic in all homotopy classes of loops}, \Cref{thm:Lyusternik_half} and \Cref{prop_dyn_reduction}.
%\todolevin{adapt intro of section to the result established, also may add Cref to the theorem to imporve readability.}
\subsection{Periodic Geodesics on Half-Lie Groups with Non-Vanishing First Homotopy Group.}
We start by showing the existence of periodic geodesics under the assumption that there exists a nontrivial homotopy class \( [\eta] \in \pi_1(\cG) \):
%in each nontrivial homotopy class a periodic geodesic of \( (G, \mathcal{G}) \):
%We start by fomrulating the existence result:
\begin{theorem}[Existence of periodic geodesics on half-Lie groups with non-vanishing first homotopy group]\label{thm: periodic geodesic in all homotopy classes of loops}
 Let $(\cG,G)$ be a half-Lie group equipped with a strong right-invariant metric $\mathcal{G}$. Assume in addition that 
 \begin{enumerate}[label=(\alph*)]
     \item\label{cond:L_2 regular}\(\mathcal{G}\) is \(L^2\)-regular,
     \item\label{cond:weaklyclosed} for every homotopy class \([\alpha]\in\pi_1(\cG,e)\) of based loops, the set
\[A_{e,\alpha}:=\{\xi \in L^2([0,1], T_e\mathcal{G}) \mid
\operatorname{evol}(\xi)=e,\ \operatorname{Evol}(\xi)\in[\alpha]\}\]
is weakly closed,
\item\label{cond:non_vanishing_homotopy} and that $\pi_1(\cG)\neq 0$.
 \end{enumerate}
 Then for every nontrivial homotopy class $[\eta]\in \pi_1(\cG)$, there exists a closed geodesic $\gamma$ of $(\cG,G)$ representing the class $[\eta]$. If the class $[\eta] \in \pi_1(\cG)$ is primitive, then the geodesic $\gamma$ is prime.
\end{theorem}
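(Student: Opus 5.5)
We will argue by the direct method in the Lie-algebra picture, which fits the assumptions \ref{cond:L_2 regular}--\ref{cond:weaklyclosed} most naturally. Since $\cG$ is a topological group, its fundamental group is abelian. Free homotopy classes of loops therefore coincide with based classes in $\pi_1(\cG,e)$: conjugation acts trivially, and any loop can be right-translated to pass through $e$, since right translation is a homeomorphism isotopic to the identity on the connected component. Fix a nontrivial class $[\eta]$. By $L^2$-regularity, every $H^1$ loop $g$ with $g(0)=g(1)=e$ can be written as $g=\operatorname{Evol}(\xi)$ with $\xi=T\mu_{g^{-1}}\dot g\in L^2([0,1],T_e\cG)$. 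Right-invariance of $G$ gives $E(g)=\tfrac12\int_0^1\|\xi(t)\|_e^2\,dt$. The minimization problem thus becomes
\[
c:=\inf\Bigl\{\tfrac12\|\xi\|_{L^2}^2 \;\Big|\; \xi\in A_{e,\eta}\Bigr\}.
\]
The set $A_{e,\eta}$ is nonempty, because an $H^1$ representative exists by smoothing, as in finite dimensions via charts.

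We take a minimizing sequence $\xi_n\in A_{e,\eta}$. It is bounded in the Hilbert space $L^2([0,1],T_e\cG)$, so after passing to a subsequence $\xi_n\rightharpoonup\xi_\infty$ weakly. Assumption \ref{cond:weaklyclosed} gives $\xi_\infty\in A_{e,\eta}$, and weak lower semicontinuity of the norm shows that $\xi_\infty$ attains $c$. Hence $\gamma:=\operatorname{Evol}(\xi_\infty)$ is a based loop at $e$ in $[\eta]$ that minimizes energy among based $H^1$ loops at $e$ in this class.

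\textbf{Positivity.} We must also show $c>0$. If $c=0$, then $\xi_\infty=0$, so $\gamma\equiv e$ lies in the trivial class, which contradicts $[\eta]\neq0$.

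\textbf{Main obstacle: the closed-geodesic condition.} The loop $\gamma$ is only a critical point for variations fixing the endpoint $e$. It is therefore a geodesic on $(0,1)$, smooth by the local theory of the strong metric, but it may have a corner at $e$. Right-invariance removes this problem. For any $h\in\cG$, the right translate $\gamma\cdot h$ is a freely homotopic loop based at $h$ with the same energy. Consequently $\gamma$ also minimizes energy among all free $H^1$ loops in the class: any free loop $\sigma$ in the class with $\sigma(0)=h$ has the based loop $\sigma h^{-1}$ at $e$, in the same class and with the same energy. Thus $\gamma$ is a critical point of $E$ on $\Lambda_\eta\cG$, and not merely on $\Omega_e\cG$. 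Taking first variations that move the basepoint, the boundary terms give $\dot\gamma(0^+)=\dot\gamma(1^-)$, so $\gamma$ is a smooth closed geodesic. To justify the first-variation formula and the regularity, we would work in a chart around $e$ using the smoothness of the strong metric, exactly as in the finite-dimensional proof.

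As a sanity check, there is an alternative argument via the universal cover $\tilde\cG$, which is again a half-Lie group. In that argument a minimizing geodesic from $e$ to the deck element, given by \Cref{thm: completeness result half lie groupd}, projects to a closed geodesic. That argument would need the same corner analysis, using right-invariance under deck translations.

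\textbf{Primality.} Suppose $[\eta]$ is primitive and $\gamma$ were not prime. Then $\gamma$ would be the $k$-fold iterate of a closed geodesic $\beta$ for some $k\ge2$. This would give $[\eta]=k[\beta]$, contradicting primitivity.
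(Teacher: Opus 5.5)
Your proposal is correct and follows essentially the same route as the paper's first (direct-method) proof. You minimize $\tfrac12\|\xi\|_{L^2}^2$ over $A_{e,\eta}$ using weak closedness and weak lower semicontinuity, normalize loops via $\sigma\mapsto\sigma\,\sigma(0)^{-1}$ so that the based minimizer also minimizes $E$ on the open component $\Lambda_\eta\cG$, conclude that it is a critical point of the free-loop energy and hence a closed geodesic, and get primality from the iteration argument. The one difference is positivity: you note that attainment of the minimum already forces $c>0$, since $\operatorname{Evol}(0)$ is the constant loop, which lies in the trivial class. This is a valid and more economical replacement for the paper's injectivity-radius argument or its appeal to the lifted-distance lemma.
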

Before delving into the proof of
\Cref{thm: periodic geodesic in all homotopy classes of loops}, we will deduce as a corollary the existence of
infinitely many geometrically distinct periodic geodesics in  the infinite-dimensional setting:
%We close this subsection by observing that, in
%of \Cref{thm: periodic geodesic in all homotopy classes of loops}, the above
%construction yields infinitely many geometrically distinct periodic geodesics.
\begin{corollary}[Infinitely many geometrically distinct periodic geodesics]\label{cor:infinitely_many_periodic_geodeisc_in_each_homotopy_class}
Let \((\cG,G)\) be as in
\Cref{thm: periodic geodesic in all homotopy classes of loops}. If \(\cG\) is
infinite-dimensional, then there exist infinitely many geometrically distinct
periodic geodesics of \((\cG,G)\).
\end{corollary}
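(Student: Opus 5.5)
The plan is to exploit the fact that right translations are isometries of the right-invariant metric $G$. This gives a large family of periodic geodesics from a single one. Fix a nontrivial class $[\eta]\in\pi_1(\cG)$ and let $\gamma$ be the closed geodesic produced by \Cref{thm: periodic geodesic in all homotopy classes of loops}. Since $\gamma$ represents a nontrivial class, it is nonconstant. For each $h\in\cG$ the right translate $\mu_h\circ\gamma=\gamma\cdot h$ is again a periodic geodesic, because right translations are smooth isometries of $G$. It lies in the same free homotopy class, since $\cG$ is path connected on the relevant component and moving $h$ to $e$ gives a free homotopy; this last point is not even needed. We call two periodic geodesics geometrically distinct if their images (traces) differ. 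The goal is therefore to show that the traces $\gamma(S^1)\cdot h$, $h\in\cG$, form infinitely many distinct sets.

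The key step is a dimension/compactness argument. Set $K:=\gamma(S^1)$, a compact subset of $\cG$ homeomorphic to a circle or to a quotient of it; in any case it is the image of a $C^1$ (indeed smooth) curve. Suppose only finitely many distinct traces $K h_1,\dots,K h_N$ occurred. Then for every $h\in\cG$ we would have $Kh\in\{Kh_1,\dots,Kh_N\}$. Fix a point $k_0\in K$. Then $h=k_0^{-1}(k_0h)$ with $k_0h\in Kh\subset\bigcup_i Kh_i$, hence
\[
\cG\subset \bigcup_{i=1}^N k_0^{-1}K h_i .
\]
Each set $k_0^{-1}Kh_i$ is the image of $S^1$ under the continuous map $t\mapsto k_0^{-1}\gamma(t)h_i$. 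Continuity uses that $\cG$ is a topological group, so only continuity of left translation is needed, not smoothness. Consequently $\cG$ would be a finite union of compact sets, hence compact.

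The concluding step is to observe that an infinite-dimensional Hilbert manifold is never compact. It is locally homeomorphic to an infinite-dimensional Hilbert space, in which no neighbourhood of a point has compact closure (Riesz). This contradicts $\cG$ being compact, and hence gives infinitely many distinct traces $Kh$, i.e.\ infinitely many geometrically distinct periodic geodesics.

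The main subtlety I expect is making "geometrically distinct" precise and ensuring the argument does not secretly need smoothness of left translation. Using images as sets and only continuity of $h\mapsto k_0^{-1}(\cdot)h$ avoids this. An alternative, if one prefers a local argument, is to note that the map $h\mapsto Kh$ has fibres $\{h: Kh=K\}$ contained in $k_0^{-1}K$, a set with empty interior. A Baire category argument on the complete metric space $\cG$ then also yields infinitely (indeed uncountably) many traces. I would keep the compactness version as the primary proof.
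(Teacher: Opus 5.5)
Your argument is correct, and it reaches the contradiction by a more direct route than the paper's.

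The paper restricts to translates by $g\in\cG_0$. It observes that if only finitely many sets $Kg$ occur, then the stabilizer $\{g\in\cG_0 : Kg=K\}$ is a closed subgroup of finite index. Such a subgroup is open, hence equal to $\cG_0$ by connectedness, so $K$ would contain a whole component $k\cG_0$, which cannot be compact.

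You bypass the stabilizer entirely. Picking $k_0\in K$ and noting $k_0h\in Kh$ gives the covering $\cG\subseteq\bigcup_{i} k_0^{-1}Kh_i$ by finitely many compact sets. This needs neither the closedness of the stabilizer nor the clopen-subgroup argument. It also shows more generally that any nonempty compact subset of a non-compact topological group has infinitely many distinct right translates.

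What the paper's restriction to $g\in\cG_0$ buys is that all translates $\gamma g$ stay in the free homotopy class of $\gamma$, since a path from $e$ to $g$ in $\cG_0$ yields a free homotopy. This is what is later used to get infinitely many geodesics \emph{in each} nontrivial class for Sobolev diffeomorphism groups. Your side remark that $\gamma h$ lies in the same class holds only for $h\in\cG_0$.

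If you want that refinement, run your covering argument with $h,h_i\in\cG_0$. Then $\cG_0\subseteq\bigcup_i k_0^{-1}Kh_i$, and since components are closed, $\cG_0$ would be compact. But $\cG_0$ is open in $\cG$, so it is itself an infinite-dimensional Hilbert manifold, which gives the same contradiction.
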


\begin{proof}
By \Cref{thm: periodic geodesic in all homotopy classes of loops}, there exists
a periodic geodesic \(\gamma\). Since the metric is right-invariant, every right
translate of \(\gamma\) is again a periodic geodesic.

Set \(\cK := \operatorname{im}(\gamma)\). Suppose that the collection \(\{ \cK g \mid g\in \cG_0 \}\) were finite, where $\cG_0$ denotes the connected component of the identity in $\cG$. Then the stabilizer
\[
\operatorname{Stab}_{\cG_0}(\cK)
:=
\{ g\in \cG_0 \mid \cK g = \cK \}
\]
would be a closed subgroup of finite index in the connected group \(\cG_0\),
and hence equal to \(\cG_0\). Indeed, its complement is a finite union of cosets
of \(\operatorname{Stab}_{\cG_0}(\cK)\). Since
\(\operatorname{Stab}_{\cG_0}(\cK)\) is closed, all these cosets are closed.
Thus \(\operatorname{Stab}_{\cG_0}(\cK)\) is open. Hence it is a nonempty
clopen subset of the connected space \(\cG_0\), and therefore equals
\(\cG_0\). Thus, for any \(k\in \cK\), one obtains
\[
k\cG_0 \subseteq \cK.
\]
Hence \(\cK\) would contain the connected component \(k\cG_0\) of \(\cG_0\).
This is impossible, since \(\cK\) as the image of $\gc$ is compact, whereas
no connected component of an infinite-dimensional half-Lie group is compact.
To see this, note that  each connected component of \(\cG\) is an open submanifold of \(\cG\), and thus a Hilbert
manifold modeled on the same infinite-dimensional Hilbert space. Since a compact
Hilbert manifold is necessarily finite-dimensional, this proves the above claim. As a consequence we obtain that the family
\[
\{ \operatorname{im}(\gamma)g \mid g\in \cG_0 \}
\]
is infinite. Thus the right translates of \(\gamma\) yield infinitely many
geometrically distinct periodic geodesics.
\end{proof}

In the following, we will provide two different proofs of
\Cref{thm: periodic geodesic in all homotopy classes of loops}: one is based on
the direct method of the calculus of variations, while the other is more geometric and proceeds by
lifting the problem to the universal cover of \(\cG\). 
\begin{proof}[Proof of \Cref{thm: periodic geodesic in all homotopy classes of loops} using the direct method of calculus of variations]
 We begin by associating to each nontrivial homotopy class
\([\alpha]\in \pi_1(\cG)\setminus\{0\}\) its minimal energy value
\begin{equation}\label{eq:min_energy_value_homotopy_class}
    c([\alpha])
    :=
    \inf_{\gamma\in[\alpha]} E(\gamma)
    =
    \inf_{\gamma\in[\alpha]}
    \frac{1}{2}\int_0^1
    G_{\gamma(t)}(\partial_t\gamma,\partial_t\gamma)\,\mathrm \mathrm{d} t .
\end{equation}
Since the metric $G$ is strong, it follows from \cite{La99} that the local
injectivity radius at each point is positive. By right-invariance of $G$, we
therefore conclude that the global injectivity radius
\[
\rho := \operatorname{inj}(\cG,G)
\]
of $(\cG,G)$ is positive. Hence, by standard arguments, as in the
finite-dimensional case, every noncontractible loop has length, and therefore
energy, uniformly bounded away from zero. In particular,
\[
c([\alpha])>0.
\]
Alternatively, this positivity follows from
\Cref{prop: minmx value homtopy class} presented later.

Let $(\gamma_n)\subseteq \Lambda_{[\alpha]}\cG$ be a minimizing sequence, that is,
\begin{equation}\label{eq:minimization_sequence_c_alpha}
    E(\gamma_n)\to c([\alpha]) \qquad \text{as } n\to\infty .
\end{equation}
For each \(n\), we normalize the base point of \(\gamma_n\) by setting
\begin{equation}\label{eq:bar_gamma_n}
    \bar\gamma_n(t):=\gamma_n(t)\gamma_n(0)^{-1}.
\end{equation}

This uses only the fixed right translation by the element
\(\gamma_n(0)^{-1}\). In particular, no smoothness of the normalization map on
the loop space is required. We have that $\bar\gamma_n(0)=\bar\gamma_n(1)=e$. 
Moreover, since \(G\) is right-invariant it holds that
\begin{equation}\label{eq:e_tilde_gamma_n_equal_E_gamma}
    E(\bar\gamma_n)=E(\gamma_n).
\end{equation}
Under the standard identification of free homotopy classes in a topological
group with based homotopy classes at the identity, the loop \(\bar\gamma_n\)
represents the based class \([\alpha]\). Denoting by
\[
u_n:=\partial_t\bar\gamma_n\circ\bar\gamma_n^{-1}
\]
the right logarithmic derivative of \(\bar\gamma_n\), we obtain by using the \(L^2\)-regularity of
\(\cG\) that
\[
u_n\in A_{e,[\alpha]}\subseteq L^2([0,1],T_e\cG).
\]
By right-invariance of \(G\), we have
\[
E(\bar\gamma_n)
=
\frac{1}{2}\int_0^1
G_{\bar\gamma_n(t)}(\partial_t\bar\gamma_n,\partial_t\bar\gamma_n)\,\mathrm dt
=
\frac{1}{2}\int_0^1
G_e(u_n,u_n)\,\mathrm dt .
\]
Thus by \eqref{eq:e_tilde_gamma_n_equal_E_gamma} and \eqref{eq:minimization_sequence_c_alpha} we have \(E(u_n)\to c([\alpha])\). In particular, \((u_n)\) is bounded in the
Hilbert space \(L^2([0,1],T_e\cG)\). Hence, after passing to a subsequence, we
may assume that
\[
u_n \rightharpoonup u
\qquad\text{in } L^2([0,1],T_e\cG).
\]
By Assumption~\ref{cond:weaklyclosed}, the set \(A_{e,[\alpha]}\) is weakly
closed, and therefore
\[
u\in A_{e,[\alpha]}.
\]
Moreover, by weak lower semicontinuity of the \(L^2\)-norm it holds that
\[
E(u)
\leq
\liminf_{n\to\infty} E(u_n)
=
c([\alpha]).
\]
Since \(u\in A_{e,[\alpha]}\), we also have \(c([\alpha])\leq E(u)\). Hence
\[
E(u)=c([\alpha]).
\]
Since \(u\in A_{e,[\alpha]}\), we have for \(\gamma:=\Evol(u)\) that
\(\gc(0)=\gc(1)=e\), and that \(\gamma\) represents the based homotopy
class \([\alpha]\). Thus, viewed as a free loop, we have \(\gamma\in\Lambda_{[\alpha]}\cG.\) Moreover, by right-invariance of \(G\),
\[
E(\gamma)
=
\frac{1}{2}\int_0^1 G_e(u,u)\,\mathrm dt
=
E(u)
=
c([\alpha]).
\]

We now show that \(\gamma\) is a global minimizer of \(E\) on the free homotopy
component \(\Lambda_{[\alpha]}\cG\). Let
\(\eta\in\Lambda_{[\alpha]}\cG\). As in \eqref{eq:bar_gamma_n} above, we define
\[
\bar\eta(t):=\eta(t)\eta(0)^{-1}.
\]
Hence \(\bar\eta\) is a based loop at \(e\),
represents the based class \([\alpha]\), and
for its right logarithmic derivative \(v:=\partial_t\bar\eta\circ\bar\eta^{-1}\)
it holds, by right-invariance, that
\[
v\in A_{e,[\alpha]} \qquad \text{and} \qquad E(\eta)=E(v)=E(\bar\eta).
\]
Since \(u\) realizes the infimum over \(A_{e,[\alpha]}\), we obtain
\[
E(\gamma)=E(u)\leq E(v)=E(\bar\eta)=E(\eta).
\]
Thus \(\gamma\) is a global minimizer of \(E\) on the free homotopy component
\(\Lambda_{[\alpha]}\cG\).

Since \(\Lambda_{[\alpha]}\cG\) is a connected component of the Hilbert manifold
\(\Lambda\cG\), it is open in \(\Lambda\cG\). Hence \(\gamma\) is a local
minimizer of \(E\) on the full free loop space. Since \(E\) is \(C^1\), it
follows that
\[
\mathrm dE(\gamma)=0.
\]
Thus \(\gamma\) is a critical point of the free-loop energy functional. By the
standard first variation formula, \(\gamma\) is therefore a periodic geodesic.
Since \(E(\gamma)=c([\alpha])>0\), it is nonconstant. Finally, by the classical
iteration argument, if \([\alpha]\) is primitive, then \(\gamma\) is prime. This
finishes the proof.
\end{proof}
In the following we present an alternative, more geometric proof of
\Cref{thm: periodic geodesic in all homotopy classes of loops}. We first
introduce some additional notation. Let
\[
\pi:\widetilde{\cG}\to\cG
\]
be the universal covering homomorphism and equip \(\widetilde{\cG}\) with the
pullback metric \(\widetilde G:=\pi^*G\). Then \(\widetilde G\) is again a
strong right-invariant metric, and \(\pi\) is a local isometry. Moreover,
\[
\Gamma:=\ker(\pi)\subseteq Z(\widetilde{\cG})
\]
is a discrete central subgroup, naturally isomorphic to \(\pi_1(\cG,e)\). Here $Z(\widetilde{\cG})$ denotes the centralizer of $\widetilde\cG$.

We will show that the existence of minimizing geodesics can be lifted to the
universal cover. For this, we first record the following weak closedness
consequence of condition~\ref{cond:weaklyclosed} in
\Cref{thm: periodic geodesic in all homotopy classes of loops}.

\begin{lemma}[Weak closedness for arbitrary endpoint classes]
\label{lem:weakly-closed-arbitrary-endpoints}
Let \((\cG,G)\) be as in
\Cref{thm: periodic geodesic in all homotopy classes of loops}. Then, for
every \(x\) in the connected component of the identity in \(\cG\) and every homotopy class \(\alpha\) of curves connecting
\(e\) and \(x\), the set
\[
A_{x,\alpha}
:=
\bigl\{
\xi\in L^2([0,1],T_e\cG)
\mid
\operatorname{evol}(\xi)=x,\ 
\operatorname{Evol}(\xi)\in\alpha
\bigr\}
\]
is weakly closed.
\end{lemma}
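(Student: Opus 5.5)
The plan is to reduce the statement to condition~\ref{cond:weaklyclosed} of \Cref{thm: periodic geodesic in all homotopy classes of loops} by concatenating every control in \(A_{x,\alpha}\) with one fixed control that returns from \(x\) to \(e\). Such a concatenation is an affine map in \(\xi\), and an affine operator of this type pulls weakly closed sets back to weakly closed sets.

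\emph{Step 1: fix a return path.} Since \(x\) lies in the identity component, I choose a fixed curve \(\sigma\) from \(e\) to \(x\) in the class \(\alpha\) with \(\sigma\in H^1\). One way to obtain it is by composing a minimizing geodesic from \Cref{thm: completeness result half lie groupd} with a loop of suitable class; any \(H^1\) representative will do, since connected components of Hilbert manifolds are \(H^1\)-path connected. Let \(\zeta\in L^2([0,1],T_e\cG)\) be the right logarithmic derivative of \(\sigma\). Then \(\operatorname{evol}(\zeta)=x\) and \(\operatorname{Evol}(\zeta)=\sigma\in\alpha\).

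\emph{Step 2: the concatenation map.} For \(\xi\in L^2([0,1],T_e\cG)\), define \(\Phi(\xi)\in L^2([0,1],T_e\cG)\) by
\[
\Phi(\xi)(t)=2\xi(2t)\ \text{for } t\le\tfrac12,\qquad \Phi(\xi)(t)=-2\zeta(2-2t)\ \text{for } t>\tfrac12 .
\]
The second half is the right logarithmic derivative of the reversed curve \(s\mapsto\sigma(1-s)\) after right translation by \(x^{-1}\). Here I use right-invariance together with the uniqueness part of \(L^2\)-regularity (\Cref{def:regular half Lie group}): the evolution of a concatenated control is the concatenation of the first evolution with the right translate of the second evolution by the first endpoint. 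Concretely, \(\operatorname{Evol}(\Phi(\xi))\) is \(\operatorname{Evol}(\xi)\) reparametrized on \([0,\tfrac12]\), followed by \(s\mapsto\sigma(2-2s)\,x^{-1}\operatorname{evol}(\xi)\) on \([\tfrac12,1]\).

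Consequently, \(\xi\in A_{x,\alpha}\) if and only if \(\operatorname{evol}(\Phi(\xi))=e\) and \(\operatorname{Evol}(\Phi(\xi))\) lies in the based class \([\operatorname{Evol}(\xi)*\bar\sigma]\), and this class equals \([\beta]:=\alpha*\bar\sigma\). For the converse direction, \(\operatorname{evol}(\Phi(\xi))=\sigma(0)x^{-1}\operatorname{evol}(\xi)=x^{-1}\operatorname{evol}(\xi)\), so \(\operatorname{evol}(\Phi(\xi))=e\) forces \(\operatorname{evol}(\xi)=x\). The second half of the path is then exactly \(\bar\sigma\), and the homotopy class of \(\operatorname{Evol}(\xi)\) is recovered as \([\beta]*\sigma=\alpha\). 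Hence
\[
A_{x,\alpha}=\Phi^{-1}(A_{e,\beta}).
\]

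\emph{Step 3: weak continuity.} The map \(\Phi\) is a bounded linear map \(\xi\mapsto(2\xi(2\cdot))\mathbf 1_{[0,1/2]}\) plus a constant, so it is weak-to-weak continuous. In particular it maps weakly convergent sequences to weakly convergent sequences. The preimage of the weakly closed set \(A_{e,\beta}\) given by condition~\ref{cond:weaklyclosed} is therefore weakly closed.

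The main obstacle I expect is the bookkeeping in Step 2. It must be checked, using only right-translation smoothness and uniqueness of evolutions, that the evolution of the concatenated control really is the concatenated path with the right translate. This comes down to observing that \(t\mapsto g(t)h\) solves the same equation with initial value \(h\). The homotopy-class identification is also needed, and for it I would invoke the standard correspondence between concatenation and the group structure in \(\pi_1\).
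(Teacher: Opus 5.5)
Your proposal is correct and follows essentially the same argument as the paper. Both proofs concatenate with a fixed return path through an affine, weakly continuous map $\Phi$. Both use the right-translate description of the second half to get $\operatorname{evol}(\Phi(\xi))=x^{-1}\operatorname{evol}(\xi)$, and both cancel the fixed path class to recover $\alpha$.

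The differences are cosmetic. The paper takes an arbitrary smooth $\beta$ from $x$ to $e$ and lands in the class $[\alpha*\beta]$. It also argues with a weakly convergent sequence rather than with the identity $A_{x,\alpha}=\Phi^{-1}(A_{e,\beta})$.

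One small correction to Step 1: the minimizing-geodesic route you suggest is not guaranteed by the hypotheses of the theorem, since it requires weak closedness of the full set $A_x$. Any piecewise smooth representative of $\alpha$ suffices instead, and, as in the paper, any $H^1$ return path from $x$ to $e$ would do.
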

\begin{proof}
Fix \(x\in\cG\) and a homotopy class \([\alpha]\) of curves from \(e\) to \(x\).
Choose a smooth curve \(\beta\colon[0,1]\to\cG\) from \(x\) to \(e\), and
denote by \([\alpha*\beta]\) the induced based loop class at \(e\).

Let \((u_n)\subset A_{x,[\alpha]}\) be weakly convergent in
\(L^2([0,1],T_e\cG)\), say \(u_n\rightharpoonup u\). We define an affine map
\[
\Phi_\beta\colon L^2([0,1],T_e\cG)\to L^2([0,1],T_e\cG)
\]
by
\[
(\Phi_\beta v)(t)
=
\begin{cases}
2v(2t), & t\in[0,\frac12],\\[0.4em]
2\,\partial_t\beta(2t-1)\circ\beta(2t-1)^{-1},
& t\in[\frac12,1].
\end{cases}
\]
This is the operation on Eulerian velocities corresponding to first running
the curve with Eulerian velocity \(v\), and then the fixed curve \(\beta\),
both with twice the speed.

The map \(\Phi_\beta\) is affine and weakly continuous. Indeed, it has the form
\[
\Phi_\beta(v)=Lv+w,
\]
where
\[
(Lv)(t)
=
\begin{cases}
2v(2t), & t\in[0,\frac12],\\
0, & t\in[\frac12,1],
\end{cases}
\]
is a bounded linear operator on \(L^2([0,1],T_e\cG)\), and where
\[
w(t)
=
\begin{cases}
0, & t\in[0,\frac12],\\[0.4em]
2\,\partial_t\beta(2t-1)\circ\beta(2t-1)^{-1},
& t\in[\frac12,1],
\end{cases}
\]
is fixed.

For each \(n\), since \(u_n\in A_{x,[\alpha]}\), we have
\[
\operatorname{evol}(u_n)=x
\qquad\text{and}\qquad
\operatorname{Evol}(u_n)\in[\alpha].
\]
Thus \(\operatorname{Evol}(u_n)\) ends at \(x\), while \(\beta\) starts at
\(x\). Therefore the concatenation
\[
\operatorname{Evol}(u_n)*\beta
\]
is well-defined. Its Eulerian velocity is precisely \(\Phi_\beta(u_n)\), up
to the above affine reparametrisation. Consequently,
\[
\Phi_\beta(u_n)\in A_{e,[\alpha*\beta]}
\qquad\text{for all }n.
\]

Since \(A_{e,[\alpha*\beta]}\) is weakly closed by
condition~\ref{cond:weaklyclosed} in
\Cref{thm: periodic geodesic in all homotopy classes of loops}, and since
\(\Phi_\beta(u_n)\rightharpoonup \Phi_\beta(u)\), it follows that
\begin{equation}
\Phi_\beta(u)\in A_{e,[\alpha*\beta]}.
\end{equation}
In particular, \(\operatorname{evol}(\Phi_\beta(u))=e\). Thus it remains to
show that this implies \(\operatorname{evol}(u)=x\). For this we set
\[
y:=\operatorname{evol}(u).
\]
The first half of \(\operatorname{Evol}(\Phi_\beta(u))\) is
\(\operatorname{Evol}(u)\), reparametrised to the interval
\([0,\frac12]\), and hence it ends at \(y\). The second half of
\(\operatorname{Evol}(\Phi_\beta(u))\) has Eulerian velocity
\[
s\longmapsto \partial_s\beta(s)\circ\beta(s)^{-1},
\]
again up to the reparametrisation \(s=2t-1\), and starts at \(y\).

Since Eulerian velocities of the form
\(\partial_t\gamma\circ\gamma^{-1}\) are invariant under right translations,
this second half is the right translate of \(\beta\) given by
\[
\tilde\beta\colon [0,1]\to \cG,\qquad
s\longmapsto \beta(s)\circ x^{-1}\circ y.
\]
Indeed, a short computation confirms that this curve has starting point and
endpoint given by
\[
\tilde\beta(0)=y
\quad \text{and}\quad
\tilde\beta(1)=x^{-1}\circ y.
\]
Hence, using \(\operatorname{evol}(u)=y\), this can be expressed as
\[
\operatorname{evol}(\Phi_\beta(u))
=
x^{-1}\circ\operatorname{evol}(u).
\]
Recall that \(\Phi_\beta(u) \in A_{e,[\alpha*\beta]}\), so in particular
\(\operatorname{evol}(\Phi_\beta(u))=e\), and therefore
\[
x^{-1}\circ\operatorname{evol}(u)=e.
\]
Thus
\[
\operatorname{evol}(u)=x.
\]

Now the second half of \(\operatorname{Evol}(\Phi_\beta(u))\) is exactly
\(\beta\), not merely a right translate of \(\beta\). Hence
\[
\operatorname{Evol}(\Phi_\beta(u))
=
\operatorname{Evol}(u)*\beta
\]
up to the standard reparametrisation. Since
\(\Phi_\beta(u)\in A_{e,[\alpha*\beta]}\), this concatenated loop represents
the class \([\alpha*\beta]\). Cancelling the fixed path class \([\beta]\) in
the fundamental groupoid gives
\[
\operatorname{Evol}(u)\in[\alpha].
\]
Together with \(\operatorname{evol}(u)=x\), this shows
\[
u\in A_{x,[\alpha]}.
\]
Therefore \(A_{x,[\alpha]}\) is weakly closed.
\end{proof}
We are now in a position to prove that, under the assumptions of
\Cref{thm: periodic geodesic in all homotopy classes of loops}, the full
Hopf--Rinow statement also holds on the universal cover.
\begin{proposition}[Completeness Properties of the Universal Cover]\label{Lemm: universal cover geodesic convex}
  Let $(\cG,G)$ be as in \Cref{thm: periodic geodesic in all homotopy classes of loops}.  Then the universal 
cover $\widetilde{\cG}$ equipped with the induced right-invariant pull-back metric is metrically and geodesically complete. Furthermore, for any two points $\tilde{p}, \tilde{q}\in \widetilde{\cG}$ there exists a
  length–minimizing geodesic 
  $\tilde{\gamma}$ in $(\widetilde{\cG}, \tilde{G})$ connecting $\tilde{p}$ and $\tilde{q}$.
\end{proposition}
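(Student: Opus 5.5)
The plan is to reduce everything to \Cref{thm: completeness result half lie groupd} applied to $(\widetilde{\cG},\widetilde G)$. First, $\widetilde{\cG}$ (more precisely the universal cover of the identity component $\cG_0$) is a topological group. It is a Hilbert manifold via the local diffeomorphism $\pi$, and right translations $\widetilde R_{\tilde g}$ are smooth because locally $\pi\circ\widetilde R_{\tilde g}=R_{\pi(\tilde g)}\circ\pi$ and $\pi$ is a local diffeomorphism. Hence $\widetilde{\cG}$ is a Hilbert half-Lie group. The pullback $\widetilde G=\pi^*G$ is strong, and it is right-invariant because $\pi$ is a homomorphism. The first part of \Cref{thm: completeness result half lie groupd} then immediately gives metric and geodesic completeness of $(\widetilde{\cG},\widetilde G)$.

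For the existence of minimizers I would verify the two extra hypotheses of \Cref{thm: completeness result half lie groupd} on $\widetilde{\cG}$. For $L^2$-regularity, identify $T_e\widetilde{\cG}\cong T_e\cG$ via $T_e\pi$. Given $X\in L^2_{\mathrm{loc}}$, take $g=\Evol(X)$ in $\cG$ and lift it uniquely by path lifting to $\tilde g$ with $\tilde g(0)=\tilde e$. Since $\pi$ is a local isometric homomorphism, $\tilde g$ solves the lifted evolution equation. Uniqueness follows by projecting any other solution to $\cG$ and using uniqueness there together with uniqueness of lifts.

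For weak closedness of $\widetilde A_{\tilde x}=\{\xi:\widetilde{\operatorname{evol}}(\xi)=\tilde x\}$, the key observation is that, with $x:=\pi(\tilde x)$, the endpoint of the lift of $\operatorname{Evol}(\xi)$ is $\tilde x$ exactly when $\operatorname{evol}(\xi)=x$ and $\operatorname{Evol}(\xi)$ lies in the homotopy class $\alpha_{\tilde x}$ of paths from $e$ to $x$ determined by $\tilde x$. This uses the standard description of the universal cover as homotopy classes of paths from $e$. Hence $\widetilde A_{\tilde x}=A_{x,\alpha_{\tilde x}}$, which is weakly closed by \Cref{lem:weakly-closed-arbitrary-endpoints}. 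Here $x$ lies in the identity component, as the lemma requires.

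Applying the second part of \Cref{thm: completeness result half lie groupd} then gives a minimizing geodesic between any two points in the same component. Since $\widetilde{\cG}$ is connected, this covers every pair $\tilde p,\tilde q$. The step I expect to require the most care is the identification $\widetilde A_{\tilde x}=A_{x,\alpha_{\tilde x}}$. It requires that $W^{1,1}$ curves admit unique continuous lifts whose homotopy classes are well defined, and that lifting commutes with evolution; I would handle this by working with continuous paths throughout. A secondary subtlety is making the half-Lie group structure on $\widetilde{\cG}$ precise, in particular continuity of inversion and of left translations. This follows from standard covering group theory for connected, locally path-connected topological groups.
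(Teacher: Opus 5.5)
Your proof is correct, but it is organized differently from the paper's. Completeness is handled the same way in both. The difference is in the existence of minimizers. The paper does not verify the hypotheses of \Cref{thm: completeness result half lie groupd} on $\widetilde{\cG}$. Instead it runs the direct method downstairs:
\begin{itemize}
\item after translating $\tilde q$ to $\tilde e$, it projects an energy-minimizing sequence of paths from $\tilde e$ to $\tilde p$ down to $\cG$;
\item there the right logarithmic derivatives lie in $A_{p,[\alpha]}$, where $[\alpha]$ is the endpoint-homotopy class determined by $\tilde p$;
\item it extracts a weak limit and uses \Cref{lem:weakly-closed-arbitrary-endpoints} with weak lower semicontinuity to get an energy minimizer in $[\alpha]$;
\item finally it lifts this minimizer back to $\widetilde{\cG}$.
\end{itemize}
You instead transport the hypotheses upstairs. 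You show $\widetilde{\cG}$ is $L^2$-regular by lifting $\Evol$, and you identify $\widetilde A_{\tilde x}$ with $A_{\pi(\tilde x),\alpha_{\tilde x}}$ via $T_e\pi$. You then invoke the theorem as a black box on the (connected) cover.

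Both arguments rest on the same two ingredients: \Cref{lem:weakly-closed-arbitrary-endpoints}, and the correspondence between endpoints of lifts and endpoint-homotopy classes of paths in $\cG$.

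Your route is more modular. It shows that $(\widetilde{\cG},\widetilde G)$ itself satisfies all hypotheses of the cited completeness theorem, so everything proved there transfers to the cover. It also treats the identity-component issue more carefully than the paper, which is loose on this point. The cost is two extra checks, both routine as you indicate: $L^2$-regularity of the cover, and compatibility of weak topologies under the isomorphism induced by $T_e\pi$.

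The paper's route avoids those checks. It produces the minimizer explicitly as the lift of a class-minimizer in $\cG$, which is the form used in the subsequent positivity lemma and in the second proof of the main theorem. In exchange, it must justify separately that an energy minimizer within a homotopy class of paths is a geodesic, since the class is open in the path space. The paper states that step without comment; your black-box approach gets it for free.
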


\begin{proof}
Since $\tilde\cG$ is again a Hilbert half-Lie group and $\tilde G$ is a strong and right-invariant metric, ~\Cref{thm: completeness result half lie groupd} implies that $(\tilde\cG, \tilde G)$ is metrically and geodesically complete.

  It remains to prove the existence of minimizing geodesics. For this we will use the strengthened weak closedness assumption, cf. Assumption~\ref{cond:weaklyclosed} of~\Cref{thm: periodic geodesic in all homotopy classes of loops}.
  Furthermore, it suffices to prove the result after
  right-translating by \(\tilde q^{-1}\). Thus we may assume without loss of
  generality that
  \[
    \tilde q=\tilde e,
    \qquad
    q=\pi(\tilde q)=e.
  \]
  Next, we set \(p:=\pi(\tilde p)\) and choose a smooth curve \(\tilde c:[0,1]\to\widetilde{\cG}\) with
  \[
    \tilde c(0)=\tilde e,
    \qquad
    \tilde c(1)=\tilde p,
  \]
  and let \(\alpha:=\pi\circ\tilde c\). Then \(\alpha\) is a path from \(e\)
  to \(p\). We denote its endpoint-homotopy class by \([\alpha]\), and set
  \[
    H^1_{e,p}([\alpha])
    :=
    \left\{
      \gamma\in H^1([0,1],\mathcal G)
      \,\middle|\,
      \gamma(0)=e,\ \gamma(1)=p,\ \gamma\in[\alpha]
    \right\}.
  \]
  Let \(\tilde\gamma_n\) be an energy-minimizing sequence of \(H^1\)-curves in
  \(\widetilde{\cG}\) connecting \(\tilde e\) and \(\tilde p\), that is,
  \[
    \tilde\gamma_n(0)=\tilde e,
    \qquad
    \tilde\gamma_n(1)=\tilde p,
  \]
  and
  \[
    \tilde E(\tilde\gamma_n)
    :=
    \frac12\int_0^1
    \tilde G_{\tilde\gamma_n}
    \bigl(\partial_t\tilde\gamma_n,\partial_t\tilde\gamma_n\bigr)
    \,\mathrm dt
    \longrightarrow
    \inf_{\tilde\gamma\in H^1_{\tilde e,\tilde p}([0,1],\widetilde{\cG})}
    \tilde E(\tilde\gamma).
  \]
  Set \(\gamma_n:=\pi\circ\tilde\gamma_n\).
  Since \(\pi\) is a local isometry, projection preserves energy. Moreover,
  by the path-lifting property, projection identifies curves from \(\tilde e\)
  to \(\tilde p\) with curves from \(e\) to \(p\) in the endpoint-homotopy
  class \([\alpha]\). Hence
  \begin{equation}\label{eq:E_gc_n_to_inf_alpha_E}
       E(\gamma_n)
    \longrightarrow
    \inf_{\gamma\in H^1_{e,p}([\alpha])}E(\gamma).
  \end{equation}
 For the right logarithmic velocity \(
    u_n:=\partial_t\gamma_n\circ\gamma_n^{-1} \) of $\gc_n$,  we have by using right invariance of $G$ and \(\gamma_n(0)=e\) that
  \[
    \gamma_n=\operatorname{Evol}(u_n),
    \qquad
    \operatorname{evol}(u_n)=p,
    \qquad
    \operatorname{Evol}(u_n)\in[\alpha].
  \]
  Thus \(u_n\in A_{p,[\alpha]}\). Since the energies $E(\gc_n)$ are by~\eqref{eq:E_gc_n_to_inf_alpha_E} bounded, after
  passing to a subsequence we may assume
  \[
    u_n\rightharpoonup u
    \quad\text{weakly in }L^2([0,1],T_e\cG).
  \]
  By \Cref{lem:weakly-closed-arbitrary-endpoints} the set \(A_{p,[\alpha]}\) is weakly closed, thus we obtain $u\in A_{p,[\alpha]}$.
 Set $\gamma:=\operatorname{Evol}(u)$. Then by $L^2$ regularity of $G$ we have \(\gamma\in H^1_{e,p}([\alpha])\), and thus by weak lower semicontinuity of
  the \(L^2\)-norm,
  \[
    E(\gamma)
    =
    \frac12\|u\|_{L^2}^2
    \le
    \liminf_{n\to\infty}\frac12\|u_n\|_{L^2}^2
    =
    \inf_{\eta\in H^1_{e,p}([\alpha])}E(\eta).
  \]
  Hence \(\gamma\) is energy-minimizing in the class \([\alpha]\). Therefore
  \(\gamma\) is a geodesic. By the usual constant-speed reparametrization
  argument, it is also length-minimizing in \([\alpha]\).

  Let \(\tilde\gamma\) be the unique lift of \(\gamma\) with \(\tilde\gamma(0)=\tilde e.\)
  Since \(\gamma\in[\alpha]\), and since \(\alpha\) lifts from \(\tilde e\) to
  \(\tilde p\), the path-lifting property gives $  \tilde\gamma(1)=\tilde p.$
  Since \(\pi\) is a local isometry, \(\tilde\gamma\) is a geodesic.

  Finally, let \(\tilde\eta\) be any \(H^1\)-curve from \(\tilde e\) to
  \(\tilde p\), and set \(\eta:=\pi\circ\tilde\eta\). Then
  \(\eta\in H^1_{e,p}([\alpha])\), and therefore
  \[
    L_{\tilde G}(\tilde\eta)
    =
    L_G(\eta)
    \ge
    L_G(\gamma)
    =
    L_{\tilde G}(\tilde\gamma).
  \]
  Thus \(\tilde\gamma\) is length-minimizing from \(\tilde e\) to
  \(\tilde p\). Translating back by \(\tilde q\) proves the result for the
  original points \(\tilde q\) and \(\tilde p\).
\end{proof}

Next we fix a nontrivial free homotopy class $[\alpha] \in \pi_1(\cG)$, corresponding to an element 
$\zeta \in \Gamma \setminus \{e\}$.  
Any loop $\eta : S^1 \to G$ in this class lifts to a path 
$\tilde{\eta} : [0,1] \to \widetilde G$ satisfying
\[
    \tilde{\eta}(0) = \tilde g_0, 
    \qquad 
    \tilde{\eta}(1) = \tilde g_0 \zeta.
\]
To prove~\Cref{thm: periodic geodesic in all homotopy classes of loops} we need one additional technical ingredient, which we state next:
\begin{lemma}[Positivity of the lifted minimum value.]\label{prop: minmx value homtopy class}
    If $[\alpha] \in \pi_1(\cG)$ is nontrivial, denote its associated element in 
    $\ker(\pi)$ by $\zeta$. Then
    \[
        c 
        \;:=\; 
        \inf_{\eta \in [\alpha]} \,E_G(\eta) 
        \;=\; 
        \frac12 d_{\widetilde G}(e,\zeta)^2 
        \;>\; 
        0,
    \]
    where $d_{\tilde{G}}$ denotes the geodesic distance on $(\widetilde \cG,\tilde{G})$.
\end{lemma}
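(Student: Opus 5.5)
The plan is to prove the identity in two inequalities and then deduce positivity from the discreteness of $\Gamma$. Throughout, loops are parametrized on $[0,1]$.

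For the inequality $c\ge \tfrac12 d_{\widetilde G}(e,\zeta)^2$, let $\eta\in[\alpha]$ be an $H^1$-loop. Let $\tilde\eta$ be its lift starting at some $\tilde g_0\in\pi^{-1}(\eta(0))$. By the discussion preceding the lemma, $\tilde\eta(1)=\tilde g_0\zeta$. Since $\zeta\in\Gamma\subseteq Z(\widetilde\cG)$ is central, $\tilde g_0\zeta=\zeta\tilde g_0$. Right-translating by $\tilde g_0^{-1}$ is an isometry of $\widetilde G$, so the curve $\tilde\eta\,\tilde g_0^{-1}$ runs from $\tilde e$ to $\zeta$. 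Because $\pi$ is a local isometry, its length equals $L_G(\eta)$. Cauchy--Schwarz then gives
\[
E_G(\eta)\ge \tfrac12 L_G(\eta)^2\ge \tfrac12 d_{\widetilde G}(e,\zeta)^2 .
\]
Taking the infimum over $\eta\in[\alpha]$ yields the inequality.

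For the reverse inequality, I would invoke \Cref{Lemm: universal cover geodesic convex}. It provides a length-minimizing geodesic $\tilde\gamma$ from $\tilde e$ to $\zeta$, which may be taken with constant speed, so that $\widetilde E(\tilde\gamma)=\tfrac12 d_{\widetilde G}(e,\zeta)^2$. Its projection $\gamma=\pi\circ\tilde\gamma$ is a closed curve at $e$, since $\pi(\zeta)=e$. It represents the class corresponding to $\zeta$ under $\Gamma\cong\pi_1(\cG,e)$, namely $[\alpha]$, and $E_G(\gamma)=\widetilde E(\tilde\gamma)$. Hence $c\le\tfrac12 d_{\widetilde G}(e,\zeta)^2$. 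Two identifications need some care here: free homotopy classes versus based classes (in a topological group $\pi_1$ is abelian, so free classes coincide with based classes), and the correspondence between $\zeta$ and $[\alpha]$, which is fixed by lifting from $\tilde e$.

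The main obstacle is positivity, i.e.\ showing $d_{\widetilde G}(e,\zeta)>0$ for $\zeta\ne\tilde e$. The geodesic distance of a strong Riemannian metric is a genuine metric inducing the manifold topology (\cite{La99}). Since $\widetilde\cG$ is Hausdorff and $\zeta\neq\tilde e$, the distance is positive. One subtlety is that the completeness theory is needed only for attainment in the reverse inequality; positivity uses only nondegeneracy of $d_{\widetilde G}$ for strong metrics. As an alternative route, discreteness of $\Gamma$ gives a neighborhood $U$ of $\tilde e$ with $U\cap\Gamma=\{\tilde e\}$, and a metric ball of radius $r>0$ contained in $U$ then shows $d_{\widetilde G}(e,\zeta)\ge r$. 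By right-invariance, this bound is even uniform over all nontrivial $\zeta$.
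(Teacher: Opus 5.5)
Your proposal is correct. The first inequality is essentially the paper's argument: lift from $\tilde g_0$ with endpoint $\tilde g_0\zeta$, right-translate by $\tilde g_0^{-1}$ using centrality of $\zeta$, and use that $\pi$ is a local isometry. The only difference there is that you work with energies directly via Cauchy--Schwarz, whereas the paper first proves $\inf_{[\alpha]}L_G=d_{\widetilde G}(e,\zeta)$ and converts afterwards.

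The genuine difference is in the reverse inequality. The paper projects \emph{arbitrary} $H^1$ paths from $\tilde e$ to $\zeta$ down to loops in $[\alpha]$. It then passes from lengths to energies via the standard fact $2\inf E=(\inf L)^2$, which relies on constant-speed reparametrization. You instead project the minimizing geodesic supplied by \Cref{Lemm: universal cover geodesic convex}.

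Your route avoids reparametrizing $H^1$ curves and shows directly that the infimum is attained. The price is that the identity now depends on the $L^2$-regularity and weak-closedness hypotheses of \Cref{thm: periodic geodesic in all homotopy classes of loops}, through the direct-method argument on the universal cover. That is legitimate here: the lemma is stated under those hypotheses, and that proposition does not use the lemma, so there is no circularity. The paper's argument, however, gives the identity for every strong right-invariant metric, with no existence of minimizers needed.

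Finally, the paper's proof leaves the strict inequality $>0$ implicit. Your explicit argument, via nondegeneracy of $d_{\widetilde G}$ for strong metrics or via discreteness of $\Gamma$, fills this in. You also rightly observe that positivity uses only the first inequality, so $c>0$ holds independently of any completeness input. That is exactly what is needed when the first proof of the theorem cites this lemma as an alternative source of $c([\alpha])>0$.
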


\begin{proof}
    Write
    \[
        \mathcal{P}(e,\zeta)
        := 
        \bigl\{\, \sigma \in H^1([0,1], \widetilde \cG)
        \ \big|\ 
        \sigma(0)=e,\ \sigma(1)=\zeta \,\bigr\}.
    \]
    We claim that
    \begin{equation}\label{eq:inf-length=distance}
        \inf_{\eta\in[\alpha]} L_G(\eta) 
        \;=\; 
        \inf_{\sigma\in\mathcal{P}(e,\zeta)} L_{\tilde{G}}(\sigma)
        \;=\; 
        d_{\widetilde G}(e,\zeta),
    \end{equation}
    where $d_{\widetilde G}$ denotes the length metric on $(\widetilde \cG,\tilde G)$.

    Indeed, let $\eta : S^1 \to \cG$ be any loop in $[\alpha]$ and lift it to 
    $\tilde{\eta} : [0,1] \to \widetilde \cG$ with $\tilde{\eta}(0)=\tilde g_0$ and 
    $\tilde{\eta}(1)=\tilde g_0 \zeta$. This follows directly from covering space theory; since $\Gamma$ is central, the endpoint 
    of the lift is determined by the free homotopy class.

    Since $\pi$ is a local isometry, we have
    \[
        L_{G}(\eta)=L_{\tilde G}(\tilde{\eta}).
    \]
    Next we right-translate $\tilde{\eta}$ by $\tilde g_0^{-1}$ and define \( \sigma(t) := \tilde{\eta}(t)\,\tilde g_0^{-1}.\)
    Then $\sigma \in \mathcal{P}(e,\zeta)$.  
    Since right translation is an isometry of $(\widetilde \cG,\tilde{G})$, we have
    \[
        L_{\tilde G}(\sigma)=L_{\tilde G}(\tilde{\eta})=L_{G}(\eta).
    \]
    Taking infima over $\eta\in[\alpha]\in \pi_1(\cG)$ gives
    \[
        \inf_{\eta\in[\alpha]} L_G(\eta) 
        \;\ge\;
        \inf_{\sigma\in\mathcal{P}(e,\zeta)} L_{\tilde{G}}(\sigma).
    \]
    The reverse inequality follows by applying the same argument in the opposite direction by lifting any $\sigma \in \mathcal{P}(e,\zeta)$ down to a loop in $[\alpha]$ and translating it.  This proves \eqref{eq:inf-length=distance}. Combining \eqref{eq:inf-length=distance} with the standard fact from Riemannian geometry that
\[
   2 \inf_{\eta\in[\alpha]} E_G(\eta)
    \;=\;\bigl( \inf_{\eta\in[\alpha]} L_G(\eta) \bigr)^2,
\]
concludes the proof.
\end{proof}
We are finally ready to present the second proof of \Cref{thm: periodic geodesic in all homotopy classes of loops}:
\begin{proof}[Proof of \Cref{thm: periodic geodesic in all homotopy classes of loops} using the universal cover]
Let \([\alpha]\in \pi_1(\cG,e)\) be a nontrivial homotopy class. Via the standard identification \(\pi_1(\cG,e)\cong \ker(\pi)\),
the class \([\alpha]\) corresponds to a unique element
\(\zeta\in\ker(\pi)\setminus\{\widetilde e\}\). By
\Cref{Lemm: universal cover geodesic convex}, there exists a length-minimizing geodesic \(\widetilde\gamma:[0,1]\to\widetilde{\cG}\)
from \(\widetilde e\) to \(\zeta\). We choose \(\widetilde\gamma\) to be
parametrized with constant speed. Its projection
\[
\gamma:=\pi\circ\widetilde\gamma:[0,1]\to\cG
\]
is a closed loop in the homotopy class \([\alpha]\). Since \(\pi\) is a local
isometry and \(\widetilde\gamma\) is a geodesic, the curve \(\gamma\) is a
geodesic segment in \((\cG,G)\), that is closed. It remains to verify the tangent endpoint matching
condition
\[
\dot\gamma(0)=\dot\gamma(1).
\]
In order to do so we consider the Hilbert path space
\[
\mathcal P_\zeta
:=
\left\{
\sigma\in H^1([0,1],\widetilde{\cG})
\;:\;
\sigma(1)=\sigma(0)\zeta
\right\}.
\]
This is a smooth Hilbert submanifold of
\(H^1([0,1],\widetilde{\cG})\). Indeed, if
\[
\operatorname{ev}_{0,1}:
H^1([0,1],\widetilde{\cG})
\longrightarrow
\widetilde{\cG}\times\widetilde{\cG},
\qquad
\sigma\longmapsto (\sigma(0),\sigma(1)),
\]
denotes the endpoint map, then
\[
\mathcal P_\zeta
=
\operatorname{ev}_{0,1}^{-1}
\bigl(\operatorname{graph}(R_\zeta)\bigr),
\]
where \(R_\zeta(g)=g\zeta\). The endpoint map is a smooth submersion, and
\(\operatorname{graph}(R_\zeta)\) is a split embedded submanifold of
\(\widetilde{\cG}\times\widetilde{\cG}\), since \(R_\zeta\) is a smooth fixed
right translation. Note also that the tangent space of \(\mathcal P_\zeta\) at \(\widetilde\gamma\) is given by
\begin{equation}\label{eq:def_tangent_space_P_zeta}
    T_{\widetilde\gamma}\mathcal P_\zeta
=
\left\{
V\in H^1(\widetilde\gamma^*T\widetilde{\cG})
\;:\;
V(1)=T_{\widetilde e}R_\zeta V(0)
\right\}.
\end{equation}

We claim that \(\widetilde\gamma\) minimizes the energy \(E\) on
\(\mathcal P_\zeta\). Let \(\sigma\in\mathcal P_\zeta\) and set
\(\overline\sigma(t):=\sigma(t)\sigma(0)^{-1}\). Since
\(\sigma(1)=\sigma(0)\zeta\) and since \(\zeta\in Z(\widetilde{\cG})\), we have
\[
\overline\sigma(0)=\widetilde e,
\qquad
\overline\sigma(1)
=
\sigma(0)\zeta\sigma(0)^{-1}
=
\zeta .
\]
By right-invariance of \(\widetilde G\) combined with the fact that 
\(\widetilde\gamma\) is length minimizing from \(\widetilde e\) to
\(\zeta\) and is parametrized with constant speed, the Cauchy--Schwarz
inequality gives
\[
E(\widetilde\gamma)
=
\frac12 L_{\widetilde G}(\widetilde\gamma)^2
=
\frac12 d_{\widetilde G}(\widetilde e,\zeta)^2
\leq
\frac12 L_{\widetilde G}(\overline\sigma)^2
\leq
E(\overline\sigma)
=
E(\sigma).
\]
Thus \(\widetilde\gamma\) is a minimizer of \(E\) on \(\mathcal P_\zeta\). Therefore we have
\begin{equation}\label{eq:crit_point_P_zeta}
    \mathrm d(E|_{\mathcal P_\zeta})_{\widetilde\gamma}(V)=0
\qquad
\forall
V\in T_{\widetilde\gamma}\mathcal P_\zeta .
\end{equation}
By the first variation formula for the energy $E$ on the space of paths with moving endpoints, that is $\cP_\zeta$, we have
\[
\mathrm dE_{\widetilde\gamma}(V)
=
\widetilde G_\zeta
\left(
\dot{\widetilde\gamma}(1),
V(1)
\right)
-
\widetilde G_{\widetilde e}
\left(
\dot{\widetilde\gamma}(0),
V(0)
\right)
-
\int_0^1
\widetilde G_{\widetilde\gamma(t)}
\left(
\nabla_t\dot{\widetilde\gamma},
V
\right)
\,\mathrm dt .
\]
Since \(\widetilde\gamma\) is a geodesic of $(\widetilde \cG, \widetilde G)$, the integral term on the right-hand side of the equation above vanishes. This in combination with~\eqref{eq:crit_point_P_zeta} yields
\begin{equation}\label{eq:tilde_G_dot_gc_V_equal_zero}
    0
=
\widetilde G_\zeta
\left(
\dot{\widetilde\gamma}(1),
V(1)
\right)
-
\widetilde G_{\widetilde e}
\left(
\dot{\widetilde\gamma}(0),
V(0)
\right)\qquad \forall V\in T_{\widetilde\gamma}\mathcal P_\zeta.
\end{equation}
For a fixed \(X\in T_{\widetilde e}\widetilde{\cG}\) we choose
\(V\in T_{\widetilde\gamma}\mathcal P_\zeta\) so that \(V(0)=X\). Then by~\eqref{eq:def_tangent_space_P_zeta} it holds that \(V(1)=T_{\widetilde e}R_\zeta X
\).
Therefore by~\eqref{eq:tilde_G_dot_gc_V_equal_zero}
\[
0
=
\widetilde G_\zeta
\left(
\dot{\widetilde\gamma}(1),
T_{\widetilde e}R_\zeta X
\right)
-
\widetilde G_{\widetilde e}
\left(
\dot{\widetilde\gamma}(0),
X
\right).
\]
As this holds for all \(X\in T_{\widetilde e}\widetilde{\cG}\), and since
\(R_\zeta\) is an isometry, we conclude that
\[
\dot{\widetilde\gamma}(1)
=
T_{\widetilde e}R_\zeta
\,\dot{\widetilde\gamma}(0).
\]
For the projection \(\gamma=\pi\circ\widetilde\gamma\), using
\(\pi\circ R_\zeta=\pi\), we obtain
\[
\dot\gamma(1)
=
T_\zeta\pi\,\dot{\widetilde\gamma}(1)
=
T_\zeta\pi\,
T_{\widetilde e}R_\zeta
\,\dot{\widetilde\gamma}(0)
=
T_{\widetilde e}\pi\,
\dot{\widetilde\gamma}(0)
=
\dot\gamma(0).
\]
Hence \(\gamma\) is \(C^1\) as a loop and therefore extends to a smooth
periodic geodesic of \((\cG,G)\) with domain of definition \(S^1\).

By \Cref{prop: minmx value homtopy class}, the length of \(\gamma\) is
\(d_{\widetilde G}(\widetilde e,\zeta)\). Thus \(\gamma\) is a nontrivial
periodic geodesic of \((\cG,G)\) satisfying \([\gamma]=[\alpha]\). As in the
first proof, the standard iteration argument shows that if the homotopy class
\([\alpha]\) is primitive, then \(\gamma\) is a prime geodesic of
\((\cG,G)\). This completes the proof.
\end{proof}

\subsection{A Lyusternik–-Fet Theorem for Non-Aspherical Half-Lie Groups satisfying a Palais--Smale condition}
A classical problem in Riemannian geometry asks whether, in addition to the existence of a closed geodesic in every homotopy class, there also exists a periodic geodesic that is contractible. In the compact, finite-dimensional setting, the famous Lyusternik–Fet theorem answers this question affirmatively. We now address the analogous problem for half-Lie groups under the assumption that the energy functional  satisfies a Palais–Smale condition modulo right translations. Therefore, we will first note that  the group \(\cG\) acts on the free loop space $\Lambda \cG$ by right translations, i.e., for $\gamma\in \Lambda \cG$ and $g\in\cG$
\[
(\gamma,g)\mapsto \gamma g,
\qquad
(\gamma g)(t):=\gamma(t)g.
\]
Since the metric $G$ is right-invariant, the energy functional $E$
is invariant under this action, i.e., 
$E(\gamma g)=E(\gamma)$. This gives rise to the following notion.

\begin{definition}[\((PS)_c\) modulo right translations]
Let \(c\in\bR\)\label{def:ps_c_mod_right_trans}. We say that the energy functional \(E\) satisfies the
Palais--Smale condition at level \(c\) modulo right translations on $\Lambda_0\cG$ if every
\((PS)_c\)-sequence \((\gamma_n)\subset\Lambda_0\cG\) for \(E\), in the sense of
\Cref{def:def_and_cond_PS}, admits a sequence \((g_n)\subset\cG\) such that,
after passing to a subsequence,
\[
\gamma_n g_n\to\gamma
\qquad
\text{in } \Lambda_0\cG.
\]
\end{definition}

\begin{remark}[Non-right invariant \((PS)_c\) condition]
We emphasize that, whenever \(\cG\) is noncompact, the energy functional
\(E\) itself can never satisfy the ordinary $(PS)_c$-condition on
\(\Lambda\cG\). Indeed, if \(\gamma\) is a critical point of \(E\), then so is
every right translate \(\gamma g\), \(g\in\cG\), and
\[
E(\gamma g)=E(\gamma).
\]
Choosing a sequence \(g_n\to\infty\) in \(\cG\), the sequence
\[
(\gamma g_n)
\]
is a \((PS)_{E(\gamma)}\)-sequence without any convergent subsequence in
\(\Lambda\cG\). Consequently, the reduced Palais--Smale condition modulo right translations for
\(E\) is the natural compactness assumption in the right-invariant
setting.
\end{remark}
\begin{remark}[\((PS)_c\) condition of the reduced energy functional]
We note that, since \(E\) is invariant under fixed right translations, it descends to a
well-defined function on the orbit set
\begin{equation}\label{eq:reduced_energy}
 \bar E:\Lambda\cG/\cG\to\mathbb R,
 \qquad
 \bar E([\gamma]) := E(\gamma),
\end{equation}
where \(\Lambda\cG/\cG\) denotes the set of orbits of the right-translation
action. At this point no smooth structure on the quotient is being assumed. For every fixed \(g\in\cG\), the right translation
\[
R_g:\cG\to\cG,
\qquad
x\mapsto xg,
\]
is smooth, and hence induces a smooth map on the loop space,
\[
\Lambda\cG\to\Lambda\cG,
\qquad
\gamma\mapsto \gamma g.
\]
This is the only smoothness used in the definition of the Palais--Smale
condition modulo right translations. On the contractible component, there is a canonical identification at the level
of sets,
\[
\Lambda_0\cG/\cG\cong\Omega_e\cG,
\qquad
[\gamma]\longmapsto \gamma\gamma(0)^{-1}.
\]
Indeed, for each fixed loop \(\gamma\), right translation by the fixed element
\(\gamma(0)^{-1}\) sends \(\gamma\) to a based loop. However, in the half-Lie
group setting, one should not automatically regard the map
\[
\gamma\longmapsto \gamma\gamma(0)^{-1}
\]
as a smooth map on the loop space, since this would require smooth dependence
on the translating element \(\gamma(0)^{-1}\), which is not part of the
half-Lie group structure. Consequently, the Palais--Smale condition modulo right translations, as introduced in~\Cref{def:ps_c_mod_right_trans}, should be
understood as a compactness condition formulated directly on
\(\Lambda_0\cG\), not automatically as the usual Palais--Smale condition for a
smooth reduced functional. If, in addition, \(\Lambda_0\cG/\cG\) carries a
compatible smooth structure for which \(\bar E\) is smooth and quotient
convergence is represented by convergence after right translations, then the
two formulations agree. Without such additional structure, this equivalence
should not be assumed.
\end{remark}

In order to formulate the Lyusternik–-Fet theorem, we first
recall, for the convenience of the reader, how a non-vanishing higher homotopy
group gives rise to a minimax value. This minimax value will turn out to be
positive; this positivity is a nontrivial point and is not automatic for
arbitrary minimax constructions, even on non-compact finite-dimensional manifolds.

Assume that there exists \(q\geq 2\) such that \(\pi_q(\cG)\neq 0\). Using the
standard identification
\[
\pi_q(\cG)\cong \pi_{q-1}(\Omega_0\cG),
\]
where \(\Omega_0\cG\) denotes the space of based contractible \(H^1\)-loops in
\(\cG\), we obtain a nontrivial homotopy class
\[
[\Gamma]\in \pi_{q-1}(\Omega_0\cG)\setminus\{0\}.
\]
We regard \(\Omega_0\cG\) as a subspace of the contractible component
\(\Lambda_0\cG\) of the free loop space. Thus the class \([\Gamma]\) gives rise
to a homotopy class in \(\Lambda_0\cG\), which we denote again by
\([\Gamma]\). More explicitly, we define
\begin{equation}\label{eq:defn_Gamma_half_lie_group}
    \Gamma :=
    \left\{
    \gamma \in C^0(S^{q-1},\Lambda_0\cG)
    \;\middle|\;
    [\gamma]=[\Gamma] \text{ in } \pi_{q-1}(\Lambda_0\cG)
    \right\}.
\end{equation}
The minimax value associated with \([\Gamma]\) is then defined by
\begin{equation}\label{eq:minimax_value_higher_homotopy_group_half_lie_group}
    c(\Gamma):=
    \inf_{\gamma\in\Gamma}\sup_{\xi\in S^{q-1}} E(\gamma(\xi)).
\end{equation}
We are now able to formulate the Lyusternik--Fet theorem:
\begin{theorem}[Lyusternik–-Fet theorem for half-Lie groups]
\label{thm:Lyusternik_half}
Let \((\cG,G)\) be a half-Lie group equipped with a strong
right-invariant Riemannian metric. Assume, in addition, that
\begin{enumerate}[label=(\alph*)]
\item\label{it:1 Lyusternik_half_homotopy} there exists \(q\ge2\) such that
\(
\pi_q(\cG)\neq0,
\) and that,
\item\label{it:2 Lyusternik_half_rigth_PS_c}  for the induced nontrivial class
\[
0\neq[\Gamma]\in\pi_{q-1}(\Lambda_0\cG),
\]
the energy functional $E$ satisfies the Palais--Smale condition at the
level $c(\Gamma)$ as in \eqref{eq:minimax_value_higher_homotopy_group_half_lie_group} modulo right translations.
\end{enumerate}
Then \((\cG,G)\) admits a contractible, nonconstant  periodic geodesic of energy $c(\Gamma)>0$.
\end{theorem}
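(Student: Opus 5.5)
The plan is to follow the classical minimax scheme on the contractible free loop space and to use the right-translation invariance only at the final compactness step. By \Cref{thm:loop_space_is_complete}, since $(\cG,G)$ is metrically complete by \Cref{thm: completeness result half lie groupd}, the component $\Lambda_0\cG$ with the $H^1$-metric $\bar G$ is a complete strong Riemannian Hilbert manifold. The energy $E$ is $C^{1,1}$ on it, at least on sublevel sets, as in the finite-dimensional case; I would take this from the standard loop-space theory in \cite{Abbo13Lect}. The family $\Gamma$ in \eqref{eq:defn_Gamma_half_lie_group} consists of all continuous maps $S^{q-1}\to\Lambda_0\cG$ in a fixed free homotopy class. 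It is therefore positively invariant under the negative gradient flow of $E$: composing with the time-$s$ flow map is homotopic to the identity through the flow itself. The flow exists for all positive times on a complete manifold because $E$ is bounded below. Finiteness of $c(\Gamma)$ is clear, since $\Gamma\neq\emptyset$ and $E\geq 0$. \Cref{thm:General_Minimax_Principle} then yields a $(PS)_{c(\Gamma)}$-sequence $(\gamma_n)\subset\Lambda_0\cG$.

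Next I apply assumption \ref{it:2 Lyusternik_half_rigth_PS_c}. It gives $g_n\in\cG$ such that, along a subsequence, $\gamma_ng_n\to\gamma$ in $\Lambda_0\cG$. Each fixed right translation $R_{g_n}$ induces an isometry of $(\Lambda\cG,\bar G)$, because right translations are isometries of $G$ and hence preserve the Levi-Civita connection and the covariant derivatives entering $\bar G$. Therefore $E(\gamma_ng_n)=E(\gamma_n)\to c(\Gamma)$ and $\|\dd E(\gamma_ng_n)\|=\|\dd E(\gamma_n)\|\to0$. Continuity of $E$ and $\dd E$ gives $E(\gamma)=c(\Gamma)$ and $\dd E(\gamma)=0$. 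By the first variation formula, $\gamma$ is then a contractible periodic geodesic.

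The main obstacle is positivity, $c(\Gamma)>0$; without it the limit could be a constant loop. My approach would use the uniform injectivity radius $\rho>0$ from right-invariance, as in the first proof of \Cref{thm: periodic geodesic in all homotopy classes of loops}. Suppose $c(\Gamma)=0$. Then there is $\gamma\in\Gamma$ with $\sup_\xi E(\gamma(\xi))<\rho^2/8$, so every loop $\gamma(\xi)$ has length less than $\rho/2$ and lies in the geodesic ball of radius $\rho/2$ around its base point. Each loop can then be contracted to its base point $\gamma(\xi)(0)$ by the geodesic homotopy $H_s(\xi)(t)=\exp_{\gamma(\xi)(0)}\bigl((1-s)\exp^{-1}_{\gamma(\xi)(0)}\gamma(\xi)(t)\bigr)$. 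I must check that this is continuous in $\xi$ with values in $H^1$. The delicate point is that $\exp$ depends smoothly on the base point, since the geodesic spray is smooth, so no smoothness of left translation is required. This deforms $\gamma$ within $\Lambda_0\cG$ into a map into the constant loops $\cG\subset\Lambda_0\cG$. Hence $[\Gamma]$ lies in the image of $\pi_{q-1}(\cG)\to\pi_{q-1}(\Lambda_0\cG)$. This contradicts nontriviality, provided the induced class is not in that image. To make this rigorous I would use the splitting $\Lambda_0\cG\simeq\cG\times\Omega_0\cG$, given by $\gamma\mapsto(\gamma(0),\gamma\gamma(0)^{-1})$, which is a homeomorphism because $\cG$ is a topological group. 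Since $[\Gamma]$ originates from $\pi_{q-1}(\Omega_0\cG)\setminus\{0\}$, its $\Omega_0\cG$-component is nonzero, whereas constant loops have trivial $\Omega_0\cG$-component. Here I interpret $\Gamma$ via this nontrivial component, as the statement intends. The homotopy-theoretic argument therefore only needs continuity, not smoothness, of $\gamma\mapsto\gamma\gamma(0)^{-1}$.

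Since $c(\Gamma)>0$ and $E(\gamma)=c(\Gamma)$, the geodesic $\gamma$ is nonconstant, which completes the plan.
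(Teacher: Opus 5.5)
Your overall route is the paper's. The general minimax principle on the complete loop space gives a $(PS)_{c(\Gamma)}$-sequence, and the Palais--Smale condition modulo right translations gives the limit. Positivity of $c(\Gamma)$ comes from contracting short loops to their base points inside balls of uniform radius: normal balls in your version, convex balls in the paper's. You also supply a step the paper leaves implicit: each fixed $R_{g_n}$ acts isometrically on $(\Lambda\cG,\bar G)$, so $(\gamma_n g_n)$ is again a $(PS)_{c(\Gamma)}$-sequence and its limit is critical. Your radial contraction using base-point-dependent $\exp^{-1}$ is a sound way to make the deformation continuous in $H^1$.

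The final topological step is not justified as written. You claim that $\gamma\mapsto(\gamma(0),\gamma\gamma(0)^{-1})$ is a homeomorphism $\Lambda_0\cG\to\cG\times\Omega_0\cG$ ``because $\cG$ is a topological group''. Your contradiction relies on exactly the continuity of the second component $p_2(\gamma)=\gamma\gamma(0)^{-1}$. The group axioms give this continuity only for $C^0$ loops. In the $H^1$ topology you need $L^2$-control of $t\mapsto T_{\gamma(t)}R_{\gamma(0)^{-1}}\dot\gamma(t)$ as both the loop and the translating element vary. That is essentially joint continuity of $(v,g)\mapsto TR_g v$, which is not part of the half-Lie group axioms and would need a separate argument. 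The paper already warns that this normalization map need not be smooth on the loop space, and $H^1$-continuity is not automatic either.

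The fix is to project with $\operatorname{ev}_0$ instead of $p_2$, as the paper does. The maps $\operatorname{ev}_0$ and $s$ (constant loops) are clearly continuous on $H^1$-loops, $\operatorname{ev}_0\circ s=\mathrm{id}_{\cG}$, and $\operatorname{ev}_0\circ i$ is constant. Take $\gamma\in\Gamma$ of small energy. Your contraction gives $\gamma\simeq s\circ\operatorname{ev}_0\circ\gamma$. Since $\gamma\simeq i\circ\Gamma_0$, the map $\operatorname{ev}_0\circ\gamma$ is null-homotopic. Hence $\gamma$ is null-homotopic, contradicting $[\Gamma]\neq0$ in $\pi_{q-1}(\Lambda_0\cG)$, which is exactly hypothesis (b). With this replacement your argument is complete.
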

\begin{comment}
    
\begin{theorem}[Lyusternik--Fet theorem for half-Lie groups]
\label{thm:Lyusternik_half}
Let \((\cG,G)\) be a half-Lie group equipped with a strong
right-invariant Riemannian metric. Assume, in addition, that
\begin{enumerate}[label=(\alph*)]
\item there exists \(q\ge2\) such that
\(
\pi_q(\cG)\neq0,
\) and that,
\item for the induced nontrivial class
\[
0\neq[\Gamma]\in\pi_{q-1}(\Lambda_0\cG/\cG),
\]
the reduced energy functional $\bar E$, as introduced in~\eqref{eq:reduced_energy}, satisfies the Palais--Smale condition at the
level
\begin{equation}\label{eq:minmax}
    c(\Gamma)
:=
\inf_{\gamma\in[\Gamma]}
\sup_{\xi\in S^{q-1}}
\bar E(\gamma(\xi)).
\end{equation}
\end{enumerate}
Then \((\cG,G)\) admits a contractible,  periodic geodesic.
\end{theorem}
\end{comment}

\begin{remark}[Alternative compactness assumption]
Alternatively, instead of assuming the full Palais--Smale condition for
\(\bar E\) at the level \(c(\Gamma)\), one may impose the corresponding
compactness assumption from
\Cref{thm: periodic geodesic in all homotopy classes of loops}. Then, following
the argument of the direct-method proof of
\Cref{thm: periodic geodesic in all homotopy classes of loops}, it suffices to
assume the following norm convergence property.

Let \((\gamma_n)\) be the \((PS)_{c(\Gamma)}\)-sequence constructed in
\Cref{lemm:Existence_of Palais_Smale_sequence}, and let
\[
u_n := \partial_t\gamma_n\circ\gamma_n^{-1}
\in L^2([0,1],T_e\cG)
\]
be its right logarithmic derivatives. If, after passing to a subsequence,
\[
u_n \rightharpoonup u
\qquad\text{in } L^2([0,1],T_e\cG),
\]
then one assumes that
\[
\|u_n\|_{L^2([0,1],T_e\cG)}
\to
\|u\|_{L^2([0,1],T_e\cG)} .
\]
This implies
\[
u_n\to u
\qquad\text{strongly in } L^2([0,1],T_e\cG),
\]
and hence, by \(L^2\)-regularity,
\[
\gamma_n\to\gamma:=\Evol(u)
\qquad\text{in } H^1([0,1],\cG).
\]

We note that, without such a norm convergence assumption, it is unclear whether
one can exclude loss of norm along weakly convergent subsequences of
\((u_n)\).
\end{remark}

%Having formulated the Palais--Smale condition modulo right translations, we
%also record the corresponding \(q=1\) statement. The existence of periodic
%geodesics in prescribed nontrivial homotopy classes was proved above by the
%direct method in
%\Cref{thm: periodic geodesic in all homotopy classes of loops}. The following
%result gives an alternative Palais--Smale point of view: instead of using the
%direct method, one assumes compactness directly at the relevant minimax level.
%The proof follows the same minimax argument as in \Cref{thm:Lyusternik_half}.

% \begin{theorem}[Periodic geodesics from a \(PS_c\)-condition modulo right translations]
% \label{thm:periodic_geodesic_from_PS_mod_right_translations}
% Let \((\cG,G)\) be a half-Lie group equipped with a strong right-invariant
% Riemannian metric. Assume that:
% \begin{enumerate}[label=(\alph*)]
%     \item\label{it:pi1_nontrivial_PS_theorem}
%     \(\pi_1(\cG)\neq0\);

%     \item\label{it:PS_level_homotopy_class}
%     for some nontrivial homotopy class \([\alpha]\in\pi_1(\cG)\setminus\{0\}\), the energy
%     functional \(E\) satisfies the Palais--Smale condition at the level
%     \(c([\alpha])\) modulo right translations, where \(c([\alpha])\) denotes the
%     minimum value from \eqref{eq:min_energy_value_homotopy_class}.
% \end{enumerate}
% Then \((\cG,G)\) admits a nonconstant periodic geodesic of energy
% \(c([\alpha])>0\) in the homotopy class \([\alpha]\).
% \end{theorem}

\begin{remark}[Palais--Smale approach in a nontrivial homotopy class]
The existence result for $\pi_1(\cG)\neq 0$ of~\Cref{thm: periodic geodesic in all homotopy classes of loops} could also be obtained from a
Palais--Smale compactness assumption, in analogy with the proof of
Theorem~\ref{thm:Lyusternik_half}. Indeed, let
\[
 c([\alpha])
=
\inf_{\gamma\in[\alpha]} E(\gamma)
\]
for a nontrivial homotopy class \([\alpha]\in\pi_1(G)\setminus\{0\}\).
Since \(\Lambda_{[\alpha]}G\) is a connected component of the complete loop space
\(\Lambda G\), Ekeland's variational principle yields a
\((PS)_{c([\alpha])}\)-sequence in \(\Lambda_{[\alpha]}G\).
If, in addition, the energy functional satisfies the Palais--Smale condition
modulo right translations at the level \(c([\alpha])\), then one obtains a
critical point \(\gamma\in\Lambda_{[\alpha]}G\) with
\[
E(\gamma)=c([\alpha]).
\]
Since we have already shown that \(c([\alpha])>0\),
the loop \(\gamma\) is nonconstant and therefore defines a periodic geodesic
representing the class \([\alpha]\).
\end{remark}

% \begin{remark}
% This should 
%be read as a Palais--Smale reformulation of the
%\(q=1\) minimax argument, 
% not be viewed as a replacement for the direct-method result.
% Compared with
% \Cref{thm: periodic geodesic in all homotopy classes of loops}, the compactness
% input provided by condition~\ref{cond:weaklyclosed} is shifted to
% condition~\ref{it:PS_level_homotopy_class} in
% \Cref{thm:periodic_geodesic_from_PS_mod_right_translations}. In particular, the
% structural assumptions \ref{cond:L_2 regular} and \ref{cond:weaklyclosed}  in \Cref{thm: periodic geodesic in all homotopy classes of loops} are
% not part of the statement above, although the authors believe that they are expected to be relevant when
% verifying this Palais--Smale condition in concrete examples.
%\end{remark}

We now return to the higher-homotopy setting of \Cref{thm:Lyusternik_half}.
Before proving \Cref{thm:Lyusternik_half}, we record the following consequence.
By an argument following the lines of
\Cref{cor:infinitely_many_periodic_geodeisc_in_each_homotopy_class}, the
existence of one contractible periodic geodesic in the infinite-dimensional
setting implies the existence of infinitely many geometrically distinct
contractible periodic geodesics.

\begin{corollary}[Infinitely many geometrically distinct contractible periodic geodesics]
\label{cor:infinitely_many_contractible_periodic_geodesics}
Let \((\cG,G)\) be as in \Cref{thm:Lyusternik_half}. If \(\cG\) is
infinite-dimensional, then there exist infinitely many geometrically distinct
contractible periodic geodesics of \((\cG,G)\).
\end{corollary}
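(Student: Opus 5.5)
The plan is to copy the argument of \Cref{cor:infinitely_many_periodic_geodeisc_in_each_homotopy_class} almost verbatim, with \Cref{thm:Lyusternik_half} supplying the initial geodesic. By \Cref{thm:Lyusternik_half} there is a nonconstant periodic geodesic \(\gamma\in\Lambda_0\cG\) of energy \(c(\Gamma)>0\).

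The first step is to check that right translates stay in the class we want. Since \(G\) is right-invariant, every right translation \(R_g\) is an isometry, so each \(\gamma g\) is again a nonconstant periodic geodesic. We need \(\gamma g\) to remain contractible. For \(g\in\cG_0\), choose a path \(g_s\) from \(e\) to \(g\); then \(s\mapsto \gamma g_s\) is a free homotopy from \(\gamma\) to \(\gamma g\), so \(\gamma g\in\Lambda_0\cG\). This step uses only continuity of \((x,g)\mapsto xg\), which holds because \(\cG\) is a topological group. Note also that every contractible loop lies in a single component, and here it lies in \(\cG_0\), since \(\gamma\) is a based contractible loop at \(e\) up to translation.

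The second step shows that infinitely many of these translates are geometrically distinct. Set \(\cK:=\operatorname{im}(\gamma)\), which is compact as a continuous image of \(S^1\). Suppose the family \(\{\cK g\mid g\in\cG_0\}\) were finite. Then \(\operatorname{Stab}_{\cG_0}(\cK)\) is a subgroup of finite index. It is also closed: if \(g_n\to g\) with \(\cK g_n=\cK\), then compactness of \(\cK\) and continuity of multiplication give \(\cK g\subseteq\cK\), and the same argument applied to \(g_n^{-1}\) gives the reverse inclusion. A closed subgroup of finite index is clopen, so by connectedness it equals \(\cG_0\). Then \(k\cG_0\subseteq\cK\) for any \(k\in\cK\). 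But \(k\cG_0\) is a connected component of \(\cG\), hence an open subset of an infinite-dimensional Hilbert manifold. It is closed in the compact set \(\cK\), so it would be compact, which is impossible in infinite dimensions. Hence the family of images is infinite, and the right translates \(\gamma g\), \(g\in\cG_0\), give infinitely many geometrically distinct contractible periodic geodesics.

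The main obstacle is the closedness of the stabilizer, which the earlier corollary only asserted. I will prove it through the compactness of \(\cK\) as above, arguing with Hausdorff distance or directly with sequences. Everything else is inherited from the earlier corollary, together with the observation that translating by elements of \(\cG_0\) preserves contractibility.
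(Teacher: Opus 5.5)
Your proposal is correct and follows the paper's argument. The paper proves this corollary simply by pointing to the proof of \Cref{cor:infinitely_many_periodic_geodeisc_in_each_homotopy_class}: take right translates of the geodesic from \Cref{thm:Lyusternik_half}, observe that the stabilizer in \(\cG_0\) would be a closed subgroup of finite index, and conclude that a component would have to be compact, which is impossible. You follow the same route and additionally prove two details the paper leaves implicit, namely that the stabilizer is closed and that translates stay contractible.

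Your side claim that \(\gamma\) lies in \(\cG_0\) is not justified as stated. It is also not needed: right translation by any element preserves contractibility, and the stabilizer argument works in whichever component \(\operatorname{im}(\gamma)\) lies.
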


As already pointed out before stating the main theorem of this subsection, the
proof of \Cref{thm:Lyusternik_half} is based on Morse theory for the energy
functional on the space of contractible loops, using minimax techniques.
Therefore, we first show in the following lemma that the minimax value
\eqref{eq:minimax_value_higher_homotopy_group_half_lie_group} is strictly
positive.
\begin{lemma}[Positivity of the minimax value]
Let \((\cG,G)\) be as in~\Cref{thm:Lyusternik_half}. Then
$c(\Gamma)$, as defined in~\eqref{eq:minimax_value_higher_homotopy_group_half_lie_group}, is strictly positive.
\end{lemma}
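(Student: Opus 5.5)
We argue by contradiction: we assume \(c(\Gamma)=0\) and show that this forces \([\Gamma]=0\) in \(\pi_{q-1}(\Lambda_0\cG)\), contradicting assumption~\ref{it:1 Lyusternik_half_homotopy}. The key geometric input is the positivity of the injectivity radius \(\rho=\operatorname{inj}(\cG,G)>0\), already used in the direct-method proof of \Cref{thm: periodic geodesic in all homotopy classes of loops}: since \(G\) is strong, the local injectivity radius at \(e\) is positive (cf.~\cite{La99}), and right translations are isometries, so the same radius works uniformly at every point.

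\textbf{Step 1: small-energy loops lie in small balls.} If \(c(\Gamma)=0\), then we can choose \(\gamma\in\Gamma\) with \(\sup_{\xi\in S^{q-1}}E(\gamma(\xi))<\varepsilon\), where \(\varepsilon>0\) is fixed with \(\sqrt{2\varepsilon}<\rho/2\). By Cauchy--Schwarz, \(L(\gamma(\xi))\le\sqrt{2E(\gamma(\xi))}<\rho/2\). Hence every loop \(\gamma(\xi)\) lies in the geodesic ball \(B_{\rho/2}(\gamma(\xi)(0))\). Inside such a ball the exponential map is a diffeomorphism, and minimizing geodesics between points of the ball are unique and depend smoothly on their endpoints.

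\textbf{Step 2: contract the family.} We construct the standard deformation
\[
H(s,\xi)(t):=\exp_{\gamma(\xi)(0)}\bigl((1-s)\exp_{\gamma(\xi)(0)}^{-1}\gamma(\xi)(t)\bigr),\qquad s\in[0,1].
\]
At \(s=0\) this is \(\gamma(\xi)\), and at \(s=1\) it is the constant loop at \(\gamma(\xi)(0)\). This gives a homotopy in \(\Lambda_0\cG\) from \(\gamma\) into the subspace \(\cG\subset\Lambda_0\cG\) of constant loops. Composing further with the evaluation at \(0\), we see that \([\Gamma]\) lies in the image of \(\pi_{q-1}(\cG)\to\pi_{q-1}(\Lambda_0\cG)\) induced by the constant-loop inclusion.

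\textbf{Step 3: reach the contradiction.} The class \([\Gamma]\) comes from \(\pi_{q-1}(\Omega_0\cG)\cong\pi_q(\cG)\) via the based-loop inclusion. For a topological group, the fibration \(\Omega_e\cG\to\Lambda\cG\to\cG\) splits: \(\Lambda\cG\cong\Omega_e\cG\times\cG\) via \(\gamma\mapsto(\gamma\gamma(0)^{-1},\gamma(0))\), a homeomorphism using only continuity of the group operations. Consequently \(\pi_{q-1}(\Lambda_0\cG)\cong\pi_{q-1}(\Omega_0\cG)\oplus\pi_{q-1}(\cG)\). In this decomposition \([\Gamma]\) lies in the first summand, while classes of constant loops lie in the second. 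Step 2 therefore places \([\Gamma]\) in both summands, so \([\Gamma]=0\), which is a contradiction. It follows that \(c(\Gamma)\ge\varepsilon>0\).

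\textbf{Main obstacle.} The delicate point is Step 2, namely the continuity of \(H\) as a map into the \(H^1\)-loop space in infinite dimensions. This requires smooth dependence of \(\exp_x^{-1}(y)\) on both \(x\) and \(y\) uniformly on \(\{d(x,y)<\rho/2\}\). Because the metric is only right-invariant on a half-Lie group, we cannot simply translate this to the identity smoothly in \(x\). We would instead work with the exponential map on \(T\cG\) directly: this map is smooth, being the flow of the smooth geodesic spray of the strong metric, so the inverse function theorem applies locally near the zero section. The remaining issue is to ensure that the radius is uniform along the compact image \(\gamma(S^{q-1})(0)\), which a compactness argument handles. This avoids needing the global injectivity radius at all.
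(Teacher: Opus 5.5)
Your Steps 1 and 2 match the paper's argument. Right-invariance gives a radius that is uniform in the base point, loops with $E<\varepsilon$ lie in such a ball around their base point, and the radial contraction shows that on the sublevel set the inclusion is homotopic to $s\circ\operatorname{ev}_0$.

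You diverge in Step 3, and there the justification fails as written. By continuity of the group operations, the map $\gamma\mapsto(\gamma\gamma(0)^{-1},\gamma(0))$ is a homeomorphism of \emph{continuous} loop spaces. It is not evidently one on $H^1$-loops. Since $\partial_t(\gamma g)=TR_g\,\partial_t\gamma$, continuity in the $H^1$-topology needs joint continuity of $(v,g)\mapsto TR_g v$, and that is not among the half-Lie axioms, which only make each individual $R_g$ smooth. The paper raises the analogous caveat about $\gamma\mapsto\gamma\gamma(0)^{-1}$ on $\Lambda\cG$.

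You do not need the splitting. You only use its second component $\operatorname{ev}_0$, which is plainly continuous. Write $[\Gamma]=i_*[\Gamma_0]$ with $[\Gamma_0]\in\pi_{q-1}(\Omega_e\cG)$. Since $\operatorname{ev}_0\circ i$ is constant and $\operatorname{ev}_0\circ s=\mathrm{id}_{\cG}$, Step 2 gives directly
\[
[\Gamma]=s_*(\operatorname{ev}_0)_*[\Gamma]=s_*(\operatorname{ev}_0\circ i)_*[\Gamma_0]=0 .
\]
This is exactly how the paper concludes.

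Your closing paragraph also needs correcting, because the proposed compactness argument is circular. $\varepsilon$ must be fixed before the representative $\gamma$ is chosen, whereas a radius obtained by compactness of $\operatorname{ev}_0(\gamma(S^{q-1}))$ depends on $\gamma$. Uniformity has to come from right-invariance, as in your Step 1.

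With that uniform $\rho$, the joint regularity you worry about follows without any compactness:
\begin{enumerate}
\item $\Psi(x,v)=(x,\exp_x v)$ is injective on $\{v\in T\cG:\ G(v,v)<\rho^2\}$.
\item At any point $(x_0,v_0)$ of that set, its differential is block-triangular with diagonal blocks $\mathrm{id}$ and $d(\exp_{x_0})_{v_0}$, both invertible.
\item Hence $\Psi$ is a diffeomorphism onto its open image, and $(x,y)\mapsto\exp_x^{-1}y$ is smooth on $\{d(x,y)<\rho\}$.
\item Continuity of your homotopy $H$ into $H^1$-loops then follows, since composition with smooth maps is continuous on $H^1$-maps of a one-dimensional domain.
\end{enumerate}
The paper passes over this continuity point without comment, so making it explicit is worthwhile, but it should be done this way.
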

\begin{proof}
Since the metric \(G\) is strong, there exists a geodesically convex
neighbourhood \(U\subset \cG\) of the identity \(e\). Choose \(r>0\)
sufficiently small such that
\[
B_r(e)\subset U
\]
is geodesically convex. By right-invariance of \(G\), every right
translation \(x\mapsto x\cdot g\) is an isometry. Hence, for every \(g\in\cG\), the ball \(B_r(g)=R_g(B_r(e))\) is geodesically convex.

Choose \(\varepsilon>0\) sufficiently small such that \(\sqrt{2\varepsilon}<r\). Each contractible loop of energy less than \(\varepsilon\), that is,
\[
\alpha\in \Lambda_0\cG
\qquad\text{with}\qquad
E(\alpha)<\varepsilon,
\]
is completely contained in a metric ball,
\[
\alpha([0,1])\subset B_r(\alpha(0)),
\]
which is geodesically convex. Hence every such loop
\(\alpha\) can be contracted to the constant loop at \(\alpha(0)\)
through loops remaining entirely inside \(B_r(\alpha(0))\). It follows that the inclusion
\begin{equation}\label{eq:inclusion_lamba_eps_is_homotopic_to_const}
\Lambda_0^{<\varepsilon}\cG
:=
\{\alpha\in\Lambda_0\cG\mid E(\alpha)<\varepsilon\}
\hookrightarrow \Lambda_0\cG
\end{equation}
is homotopic to the map
\begin{equation}\label{eq:s_ev_0}
s\circ \operatorname{ev}_0:
\Lambda_0^{<\varepsilon}\cG\to \Lambda_0\cG,
\end{equation}
where the evaluation map is denoted by
\[
\operatorname{ev}_0:\Lambda_0\cG\longrightarrow\cG,\qquad \gamma\mapsto\gamma(0),
\]
and \(s:\cG\to\Lambda_0\cG\) denotes the inclusion of constant loops.
Assume now, by contradiction, that
\[
c(\Gamma)=0.
\]
Then, by definition of the minimax value \(c(\Gamma)\) in~\eqref{eq:minimax_value_higher_homotopy_group_half_lie_group}, for every
\(\delta>0\) there exists a representative \(\gamma:S^{q-1}\to\Lambda_0\cG\)
of the higher homotopy class \([\Gamma]\) such that
\[
\sup_{\xi\in S^{q-1}} E(\gamma(\xi))<\delta.
\]
From this we can conclude that
\[
\gamma(S^{q-1})\subset \Lambda_0^{<\delta}\cG.
\]
Choosing \(\delta<\varepsilon\), we obtain
\[
\gamma:S^{q-1}\to \Lambda_0^{<\varepsilon}\cG.
\]
By \eqref{eq:inclusion_lamba_eps_is_homotopic_to_const} and \eqref{eq:s_ev_0}, this map is homotopic in
\(\Lambda_0\cG\) to \(s\circ \operatorname{ev}_0\circ \gamma\). Therefore,
\begin{equation}\label{eq:Gamma_equal_s_ev_0}
[\Gamma]
=
[\gamma]
=
s_*(\operatorname{ev}_0)_*[\gamma].
\end{equation}
By definition of \(\Gamma\) in \eqref{eq:defn_Gamma_half_lie_group}, there exists a homotopy class
\([\Gamma_0]\in \pi_{q-1}(\Omega_e\cG)\) such that, for the inclusion
\(i:\Omega_e\cG\hookrightarrow \Lambda_0\cG\), it holds that
\[
[\Gamma]=i_*[\Gamma_0].
\]
Together with \eqref{eq:Gamma_equal_s_ev_0}, this implies that
\[
(\operatorname{ev}_0)_*[\gamma]
=
(\operatorname{ev}_0\circ i)_*[\Gamma_0].
\]
Since
\[
\operatorname{ev}_0\circ i:\Omega_e\cG\to\cG
\]
is the constant map with value \(e\), we obtain
\[
(\operatorname{ev}_0\circ i)_*[\Gamma_0]=0 \quad \text{in } \pi_{q-1}(\cG).
\]
Thus \((\operatorname{ev}_0)_*[\gamma]=0\) in \(\pi_{q-1}(\cG)\), and consequently, by \eqref{eq:Gamma_equal_s_ev_0},
\[
[\Gamma]
=
s_*(\operatorname{ev}_0)_*[\gamma]
=
s_*(0)
=
0,
\]
which contradicts the assumption that \([\Gamma]\neq 0\) in
\(\pi_{q-1}(\Lambda_0\cG)\). Therefore
\[
c(\Gamma)>0.
\]
This finishes the proof.
\end{proof}
Next, we show that the Riemannian energy functional admits a Palais--Smale sequence at the level $c(\Gamma)$.

\begin{lemma}[Existence of a Palais--Smale sequence]\label{lemm:Existence_of Palais_Smale_sequence}
Let $(\cG,G)$ be a half-Lie group equipped with a strong right-invariant metric $G$, and assume that there exists $q \geq 2$ such that $\pi_q(G)\neq 0$. Then, for every nontrivial class
\[
\Gamma\in \pi_{q-1}(\Lambda_0\cG)\setminus\{0\},
\]
the Riemannian energy functional $E$ admits a Palais--Smale sequence at level $c(\Gamma)$, where $c(\Gamma)$ is defined in \eqref{eq:minimax_value_higher_homotopy_group_half_lie_group}.
\end{lemma}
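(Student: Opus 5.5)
The plan is to apply the General Minimax Principle, \Cref{thm:General_Minimax_Principle}, with $\cM=\Lambda_0\cG$, $F=E$, and the admissible family given by the images $\{\gamma(S^{q-1}) : \gamma\in\Gamma\}$, with $\Gamma$ as in \eqref{eq:defn_Gamma_half_lie_group}. This requires checking three hypotheses in turn.

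First, $(\Lambda_0\cG,\bar G)$ is a complete Riemannian Hilbert manifold. This follows from \Cref{thm:loop_space_is_complete}, once we know that $(\cG,G)$ is metrically complete; the latter is the first part of \Cref{thm: completeness result half lie groupd}, which needs no regularity or weak closedness assumptions. Second, $E$ is $C^{1,1}$ on $\Lambda_0\cG$. Here I would quote the standard fact that the energy on the $H^1$ loop space of a strong Riemannian manifold is $C^2$ (indeed smooth), and hence $C^{1,1}$ on $\bar G$-balls. The proof is the finite-dimensional one, as in \cite{Abbo13Lect}, since only smoothness of $G$ and the Hilbert structure enter. If global $C^{1,1}$ is an issue, I would note that the deformation lemma behind \Cref{thm:General_Minimax_Principle} only uses local Lipschitz continuity of $\nabla E$, which suffices to define the negative gradient flow.

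Third, and the main point, is the invariance of the family. Let $\phi^s$ denote the negative gradient flow of $E$ on $\Lambda_0\cG$. Since $E\ge 0$ and $\|\nabla E\|^2$ is controlled along flow lines by the decrease of $E$, flow lines starting in a sublevel set have finite $\bar G$-length on finite time intervals. By completeness they therefore cannot escape in finite time, so $\phi^s$ is defined for all $s\ge0$. For $\gamma\in\Gamma$, the map $\phi^s\circ\gamma$ is continuous and homotopic to $\gamma$ via $\sigma\mapsto\phi^{\sigma s}\circ\gamma$. It therefore still represents $[\Gamma]$ in $\pi_{q-1}(\Lambda_0\cG)$; the flow preserves the connected component $\Lambda_0\cG$ because it is continuous. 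Thus $\phi^s(\gamma(S^{q-1}))$ lies in the family, which gives positive invariance.

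Finally, $c(\Gamma)$ is finite. It is nonnegative since $E\ge0$, and it is finite because $[\Gamma]$ is represented by some continuous map from the compact sphere $S^{q-1}$, on whose image the continuous $E$ is bounded. (By the preceding lemma it is even strictly positive, though that is not needed here.) \Cref{thm:General_Minimax_Principle} then yields a $(PS)_{c(\Gamma)}$ sequence. I expect the regularity and global existence of the gradient flow to be the only delicate step, and I would handle it by the finite-length argument above together with \Cref{thm:loop_space_is_complete}.
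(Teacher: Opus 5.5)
Your proposal is correct and follows essentially the same route as the paper: apply \Cref{thm:General_Minimax_Principle} on $\Lambda_0\cG$, get completeness from \Cref{thm: completeness result half lie groupd} and \Cref{thm:loop_space_is_complete}, and obtain positive invariance of $\Gamma$ from the homotopy $(t,\xi)\mapsto\varPhi_E^t(\gamma(\xi))$. Your additional points are refinements of steps the paper handles only implicitly (it simply restricts to times $T$ for which the flow is defined), not a different approach: global forward existence of the flow, from the length bound $\int_0^T\|\nabla E\|\,\mathrm{d}t\le\sqrt{T\,E(x)}$ for a flow line starting at $x$ together with completeness, and the finiteness of $c(\Gamma)$.
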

\begin{proof}
We want to make use of \Cref{thm:General_Minimax_Principle}. Thus, we have to check its prerequisites. Consider the Riemannian energy functional $E$ induced by the strong right-invariant metric $G$ on the loop space $\Lambda(\cG)$. This functional is smooth, and by \Cref{thm: completeness result half lie groupd}, the space $(\cG,G)$ is metrically complete. Thus, by \Cref{thm:loop_space_is_complete}, the loop space \(\left(\Lambda(\cG), \langle\cdot, \cdot\rangle_{H^1}\right)\), equipped with the $H^1$-metric induced by $G$, is metrically complete as well.
 
By using \Cref{thm:loop_space_is_complete} again, we can conclude that, since $G$ is strong, the induced $H^1$-metric on $\Lambda(\cG)$ is strong too. Thus, there exists a unique gradient vector field $\nabla E$ of $E$ on \(\left(\Lambda(\cG), \langle\cdot, \cdot\rangle_{H^1}\right)\). By the Picard--Lindelöf theorem, the negative gradient flow, denoted by $\varPhi_E^t$, of $E$ exists locally in time. It leaves $\Lambda_0(\mathcal G)$, the path-connected component of the contractible loops, invariant. Indeed, for each $t$, the map $\Phi_E^t$ is continuous, hence it sends path-connected sets to path-connected sets. Therefore, a loop starting in $\Lambda_0(\mathcal G)$ remains in the same path connected component for as long as the flow exists. 

It remains to check that the homotopy class $[\Gamma]$, as in~\eqref{eq:defn_Gamma_half_lie_group}, is positively invariant under $\varPhi_E$. Therefore, let $\gamma\in\Gamma$, that is, by \eqref{eq:defn_Gamma_half_lie_group}, a $C^0$-map \[\gamma:S^{q-1}\longrightarrow\Lambda_0\cG\] such that $[\gamma]=[\Gamma]\in \pi_{q-1}(\Lambda_0\cG)$, and let $T> 0$ be such that $\varPhi_E^T$ is defined on $\gamma(S^{q-1})$. Since $\varPhi_E^T\colon \Lambda_0\cG\to \Lambda_0\cG$ is continuous, the composition
\[
\varPhi_E^T\circ \gamma \colon S^{q-1}\longrightarrow \Lambda_0\cG
\]
is again continuous. Moreover, by the definition of $\varPhi_E$ it holds that $\varPhi_E^0=\mathrm{id}_{\Lambda_0\cG}$, and the map
\[
[0,T]\times \Lambda_0\cG\longrightarrow \Lambda_0\cG,\qquad (t,\alpha)\mapsto \varPhi_E^t(\alpha),
\]
is continuous. Hence
\[
H\colon [0,T]\times S^{q-1}\longrightarrow \Lambda_0\cG,\qquad (t,\xi)\mapsto H(t,\xi):=\varPhi_E^t(\gamma(\xi)),
\]
defines a homotopy between $\gamma$ and $\varPhi_E^T\circ\gamma$. Therefore,
\[
[\varPhi_E^T\circ\gamma]=[\gamma]=[\Gamma]
\qquad\text{in }\pi_{q-1}(\Lambda_0\cG).
\]
This shows that $\varPhi_E^T\circ\gamma\in\Gamma$, and thus $\Gamma$ is positively invariant under the negative gradient flow.

We have therefore verified the assumptions of \Cref{thm:General_Minimax_Principle}. Consequently, there exists a Palais--Smale sequence $(\gamma_n)\subseteq \Lambda_0\cG$ of $E$ at level $c(\Gamma)>0.$
\end{proof}
Now we are in the position to finish the proof of \Cref{thm:Lyusternik_half}. 
\begin{proof}[Proof of \Cref{thm:Lyusternik_half}]
By \Cref{lemm:Existence_of Palais_Smale_sequence}, the Riemannian energy functional $E$ admits a Palais--Smale sequence $(\gamma_n)\subseteq \Lambda_0\cG$ at level $c(\Gamma)$, where $c(\Gamma)$ is defined in \eqref{eq:minimax_value_higher_homotopy_group_half_lie_group}. By condition \ref{it:2 Lyusternik_half_rigth_PS_c} in \Cref{thm:Lyusternik_half}, the energy functional $E$ satisfies the $(PS)_c$ condition at level $c(\Gamma)$ modulo right translations in the sense of \Cref{def:ps_c_mod_right_trans}. 

Hence, there exists a sequence $(g_n)\subset \cG$ so that after passing to a subsequence, $(\gamma_n\cdot g_n)$ converges in $\Lambda_0\cG$ to a critical point $\gamma$ of $E$ with
\[
E(\gamma)=c(\Gamma).
\]
Since $c(\Gamma)>0$ and $\dd E(\gamma)=0$, the loop $\gamma$ is a nonconstant contractible periodic geodesic of $(\cG,G)$.
\end{proof}
\subsection{Reduction to Finite-Dimensional Totally Geodesic Submanifolds}
We conclude this section by describing a particular simple situation, in which the existence of periodic geodesics can be reduced to classical finite-dimensional results. In the following, we do not require the metric to be strong and therefore permit weak Riemannian metrics, provided that the Levi-Civita connection exists. Note that the existence of the Levi-Civita connection  is automatically satisfied for strong Riemannian metrics, but that it may fail in the weak Riemannian setting~\cite{bauer2014homogeneous}.
\begin{proposition}\label{prop_dyn_reduction}
    Let $(\mathcal{G}, G)$ be a half-Lie group equipped
with a right-invariant metric $G$, such that the Levi--Civita connection $\nabla$ associated with $(\mathcal{G}, G)$ exists. 
   Assume, in addition, that there exists a compact finite-dimensional totally geodesic submanifold $(\bar{\mathcal{M}}, \bar{G})$ of $(\mathcal{G}, G)$. We have:
    \begin{enumerate}
        \item\label{it:1_prop_dyn_reduction} 
        If $\pi_1(\bar\cM) \neq 0$, then for each nontrivial homotopy class 
        $[c] \in \pi_1(\bar\cM) \setminus \{0\}$ there exists a closed geodesic of $(\mathcal{G}, G)$, 
        which is prime if the homotopy class $[c]$ is primitive. 
        
        \item\label{it:2_prop_dyn_reduction} 
        If there exists $q \geq 2$ such that $\pi_q(\bar{\mathcal{M}}) \neq 0$, 
        then there exists a nonconstant closed geodesic of $(\cG, G)$.
    \end{enumerate}
Note that if $\cG$ is infinite-dimensional, then in each of the two situations the right-invariance of the metric $G$ implies the existence of infinitely many geometrically distinct periodic geodesics. 
\end{proposition}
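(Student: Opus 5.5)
The plan is to move the whole problem into the compact finite-dimensional manifold $(\bar\cM,\bar G)$, use classical results there, and then push the resulting geodesics back into $(\cG,G)$ via total geodesicity. Here $\bar G$ is the metric induced by $G$ on $\bar\cM$. Since $\bar\cM$ is a compact finite-dimensional Riemannian manifold, $\bar G$ is an honest smooth Riemannian metric and has its own Levi-Civita connection $\bar\nabla$.

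The first step is the standard fact that geodesics of $\bar\cM$ are geodesics of $\cG$. For a curve $c$ in $\bar\cM$ one decomposes
\[
\nabla_{\partial_t}\dot c=\bar\nabla_{\partial_t}\dot c+\mathrm{II}(\dot c,\dot c).
\]
Total geodesicity means that the second fundamental form $\mathrm{II}$ vanishes, or equivalently that $\bar G$-geodesics are $G$-geodesics. In the weak setting the orthogonal splitting $T_x\cG=T_x\bar\cM\oplus (T_x\bar\cM)^{\perp}$ still exists, because $T_x\bar\cM$ is finite-dimensional. Hence the tangential part of $\nabla$ is well defined and, by the Koszul formula, coincides with $\bar\nabla$. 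I will take ``totally geodesic'' in the sense that $\bar G$-geodesics are $G$-geodesics; if the paper's definition is instead $\mathrm{II}=0$, the displayed decomposition gives the same conclusion. So any periodic $\bar G$-geodesic, meaning a smooth $c\colon S^1\to\bar\cM$ with $\bar\nabla_{\partial_t}\dot c=0$, is a periodic geodesic of $(\cG,G)$.

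For part (1), I will apply the classical theorem for compact manifolds (see~\cite{Klingenberg1978}): every nontrivial free homotopy class $[c]\in\pi_1(\bar\cM)$ contains a closed geodesic minimizing length in that class, and it is prime if $[c]$ is primitive. Primeness is intrinsic to the curve, since it concerns whether $c$ is an iterate $\tilde c^k$ with $k\ge2$. A prime geodesic in $\bar\cM$ is therefore still prime when regarded in $\cG$, and the first step shows it is a closed geodesic of $(\cG,G)$.

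For part (2), I will apply the classical Lyusternik--Fet theorem on the compact manifold $\bar\cM$. If $\pi_q(\bar\cM)\ne0$ for some $q\ge2$, it produces a nonconstant closed geodesic in $\bar\cM$, and the first step transfers it to $\cG$.

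For the final remark, I will repeat the argument of \Cref{cor:infinitely_many_periodic_geodeisc_in_each_homotopy_class}. Right translations are isometries, so they map periodic geodesics to periodic geodesics, and the image of the geodesic is compact. If $\cG$ is infinite-dimensional, the clopen-stabilizer argument shows that the orbit of this image under $\cG_0$ is infinite. That argument used neither strongness nor the Levi-Civita connection, only the topological group structure and the smoothness of right translations, so it applies verbatim.

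The main obstacle I expect is the first step in the weak Riemannian setting. There one must make sure that the tangential projection of $\nabla$ really is $\bar\nabla$ and that total geodesicity gives $\nabla_{\partial_t}\dot c=0$, and not merely a vanishing tangential component. The Koszul formula restricted to vector fields tangent to $\bar\cM$, together with the finite-dimensionality of $T\bar\cM$ (which gives a $G$-orthogonal projection), should settle this.
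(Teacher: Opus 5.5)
Your proposal is correct and follows essentially the paper's route: apply the classical finite-dimensional results (Klingenberg for nontrivial homotopy classes, Lyusternik--Fet for $\pi_q(\bar\cM)\neq 0$) on the compact manifold $(\bar\cM,\bar G)$, transfer the resulting closed geodesics to $(\cG,G)$ by total geodesicity, and rerun the stabilizer argument of \Cref{cor:infinitely_many_periodic_geodeisc_in_each_homotopy_class} for the infinitely-many statement. The paper's proof is only two lines, so your discussion of the tangential projection and the Koszul formula in the weak setting fills in details the paper leaves implicit rather than changing the argument.
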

\begin{proof}
This follows directly from the classical finite-dimensional results~\cite{Klingenberg1978, LystFetThm51}, since periodic geodesics in the totally geodesic submanifold give rise to periodic geodesics in the infinite-dimensional half-Lie group. By an argument analogous to \Cref{cor:infinitely_many_periodic_geodeisc_in_each_homotopy_class}, we can conclude the existence of infinitely many geometrically distinct periodic geodesics.
\end{proof}
\section{Examples of half-Lie groups with periodic geodesics}\label{sec:examples}
In this section, we illustrate the applicability of \Cref{thm: periodic geodesic in all homotopy classes of loops}, \Cref{thm:Lyusternik_half}, and \Cref{prop_dyn_reduction} through a collection of examples of half-Lie groups satisfying the respective hypotheses. We first show that groups of Sobolev diffeomorphisms equipped with suitable right-invariant metrics satisfy the assumptions of \Cref{thm: periodic geodesic in all homotopy classes of loops} and \Cref{prop_dyn_reduction}. By contrast, the validity of the Palais–Smale-type hypothesis required in \Cref{thm:Lyusternik_half} is presently unknown for these examples. Finally, in \Cref{sec:ex:synthetic}, we construct a synthetic but genuinely infinite-dimensional example for which all assumptions of \Cref{thm:Lyusternik_half} can be verified.

\subsection{Groups of Sobolev Diffeomorphisms Equipped with Right-Invariant Strong Metrics}
As a first example we will study strong right-invariant metrics on $\Diff^s(M)$ as introduced in~\Cref{ssec:back:diffeo}. In this setting we can immediately apply~\Cref{thm: periodic geodesic in all homotopy classes of loops} to obtain the following existence result for periodic geodesics:
\begin{corollary}\label{cor:infinitely many periodic geodesics sobolev diffeos}
Let $s>\frac{\operatorname{dim}(M)}2+1$, let $L$ be a  positive, elliptic, self-adjoint, pseudodifferential operators  of order $2s$ and let \(\Diff^s(M)\) be equipped with the right-invariant metric \(G^L\) induced by $L$ as in
\eqref{eq:def_H_r_metric_id}. Assume, in addition, that \[\pi_1(\Diff^s(M))\neq 0.\] Then in each nontrivial homotopy class $[\alpha]\in \pi_{1}(\Diff^s(M))$ there exist infinitely many geometrically distinct nonconstant 
periodic geodesics of \((\Diff^s(M),G^L)\) in the homotopy class $[\alpha]$.
%The results continue to hold for the subgroups of volume preserving diffeomorphisms $\Diff^s_{\mu}(M)$ and symplectomorphisms $\on{Sympl}^s(M,\omega)$.
\end{corollary}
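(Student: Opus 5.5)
The plan is to deduce the corollary from \Cref{thm: periodic geodesic in all homotopy classes of loops} together with the right-translation argument of \Cref{cor:infinitely_many_periodic_geodeisc_in_each_homotopy_class}. First I would record the structural facts. $\Diff^s(M)$ is a Hilbert manifold and a topological group whose right translations $\varphi\mapsto\varphi\circ\psi$ are smooth, so it is a Hilbert half-Lie group. By \Cref{thm:propertiesSobmetrics}(1) with $r=s$, the metric $G^L$ is a smooth strong right-invariant metric. Hence \Cref{thm: completeness result half lie groupd} gives metric and geodesic completeness.

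Next I would verify assumptions \ref{cond:L_2 regular} and \ref{cond:weaklyclosed}. For $L^2$-regularity, the equation $\partial_t g=X(t)\circ g$ with $X\in L^2_{\mathrm{loc}}(\bR,\mathfrak X^s(M))$ is the flow equation of a time-dependent $H^s$ vector field. For $s>\dim(M)/2+1$ it is known (Bruveris--Vialard) to have a unique solution in $W^{1,1}_{\mathrm{loc}}$, in fact in $H^1_{\mathrm{loc}}(\bR,\Diff^s(M))$. Weak closedness is the main obstacle. The tool is the weak continuity of the flow map used by Bruveris--Vialard to prove \Cref{thm:propertiesSobmetrics}(3). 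If $u_n\rightharpoonup u$ in $L^2([0,1],\mathfrak X^s)$, then $\Evol(u_n)\to\Evol(u)$ uniformly in $t$, in a weaker topology (say $C^0([0,1],\Diff^{s-1})$, or $C^1$). From this I would get that $\operatorname{evol}(u)=\lim\operatorname{evol}(u_n)$, so the endpoint condition $\operatorname{evol}=e$ passes to the limit.

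For the homotopy class, I would argue as follows. Uniform $C^0$-closeness of the loops $\Evol(u_n)$ and $\Evol(u)$ in $\Diff^{s-1}(M)$, or even in $C^1$ diffeomorphisms, forces them to be homotopic there. One would then need that the inclusions $\Diff^s(M)\hookrightarrow\Diff^{s-1}(M)\hookrightarrow \Diff^1(M)$ are injective on $\pi_1$. These inclusions are in fact homotopy equivalences, since they are dense inclusions of Hilbert/Banach manifold groups of the same homotopy type (Palais). So the based class is preserved and $u\in A_{e,\alpha}$. This is the step I expect to require the most care. If it is problematic, I would try instead to use the $H^1$-bounds: these give equicontinuity in $\Diff^s$ and, via compact embedding, uniform convergence in $\Diff^{s-\epsilon}$.

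With \ref{cond:L_2 regular}--\ref{cond:non_vanishing_homotopy} verified, \Cref{thm: periodic geodesic in all homotopy classes of loops} yields a nonconstant periodic geodesic $\gamma$ in each nontrivial class $[\alpha]$. For infinitely many geodesics in the same class, note that $\gamma\circ\psi$ for $\psi\in\Diff^s_0(M)$ is again a periodic geodesic by right-invariance. It lies in the same free homotopy class, since $\psi$ is joined to $\mathrm{id}$ by a path. Finally, the argument of \Cref{cor:infinitely_many_periodic_geodeisc_in_each_homotopy_class} applies because $\Diff^s(M)$ is infinite-dimensional: the compact image of $\gamma$ cannot be invariant under all right translations by $\Diff^s_0(M)$. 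This gives infinitely many geometrically distinct translates.
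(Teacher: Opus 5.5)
Your proposal is correct and follows essentially the same route as the paper. The steps match: strongness of $G^L$ from \Cref{thm:propertiesSobmetrics}, $L^2$-regularity and weak closedness of the endpoint constraint from Bruveris--Vialard, and preservation of the homotopy class through uniform convergence of the flows in a weaker Sobolev diffeomorphism group combined with injectivity of the inclusion on $\pi_1$. You then conclude with \Cref{thm: periodic geodesic in all homotopy classes of loops} and the right-translation argument; your remark that translates by $\Diff^s_0(M)$ stay in the same free homotopy class is a detail the paper leaves implicit.

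The only point to tighten is the choice of intermediate space. $\Diff^{s-1}(M)$ is only defined in the standard way when $s-1>\frac{\on{dim}(M)}{2}+1$, so you should use, as the paper does, $\Diff^{s_0}(M)$ with $\frac{\on{dim}(M)}{2}+1<s_0<s$; this is exactly your $\Diff^{s-\epsilon}$ fallback.
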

\begin{remark}
    The condition \(\pi_1(\Diff^s_0(M))\neq 0\) holds for the following non-exclusive list of manifolds:
    \begin{enumerate}
        \item for \(M=S^1\), as \(\Diff^s_0(S^1)\) is homotopy equivalent to \(S^1\);

        \item for \(M=S^2\), as \(\Diff^s_0(S^2)\) is, by Smale's theorem~\cite{Smale1959}, homotopy equivalent to \(SO(3)\);

        \item for \(M=T^n\), \(n\geq 1\). Indeed, translations give an inclusion
        \[
            T^n \hookrightarrow \Diff^s_0(T^n),
        \]
        and evaluation at a point gives a left inverse. Hence
        \[
            \mathbb Z^n \cong \pi_1(T^n)\hookrightarrow \pi_1(\Diff^s_0(T^n)).
        \]

        \item for total spaces \(M\) of principal \(S^1\)-bundles
        \[
            S^1\hookrightarrow M\to B
        \]
        for which the fiber represents a nontrivial element of \(\pi_1(M)\). The fiberwise rotation action gives a loop in \(\Diff^s_0(M)\), and evaluation at a point detects the fiber class. Thus
        \[
            \pi_1(\Diff^s_0(M))\neq 0.
        \]
        For instance, the circle bundles
        \[
            S^1\hookrightarrow L(k,1)\to S^2,\qquad k\geq 2,
        \]
        give such examples, with the fiber class of order \(k\). The case \(k=1\) is the Hopf fibration
        \[
            S^1\hookrightarrow S^3\to S^2,
        \]
        but here the fiber class is trivial in \(\pi_1(S^3)\); the nontriviality of
        \(\pi_1(\Diff^s_0(S^3))\) instead follows from Hatcher's proof of the Smale conjecture~\cite{Hatcher1983}.
    \end{enumerate}
\end{remark}
\begin{proof}[Proof of~\Cref{cor:infinitely many periodic geodesics sobolev diffeos}]
One has to check the assumptions of
\Cref{thm: periodic geodesic in all homotopy classes of loops}.
By \Cref{thm:propertiesSobmetrics}, for $s=r$, the metric $G^L$ is a strong right-invariant metric on $\Diff^s(M)$, cf.~\cite{bauer2020well}. Condition~\ref{cond:non_vanishing_homotopy} holds by assumption. Moreover,
condition~\ref{cond:L_2 regular} has been checked implicitly in
\cite[Thm.~4.4, Thm.~5.8]{BruverisVialard2017}. 
It remains to check that \ref{cond:weaklyclosed} holds. For that let \(u_n\in A_{\id,[\alpha]}\) and assume that
\[
u_n\rightharpoonup u
\qquad\text{weakly in }
L^2([0,1],\mathfrak X^s(M)).
\]
Next we let
\(
\varphi_n:=\Evol(u_n)\) and 
\(\varphi:=\Evol(u)
\).
By \cite[Lemma~7.1]{BruverisVialard2017} applied with
\(\psi_0=\psi_1=\id\), the set
\[
\left\{
v\in L^2([0,1],\mathfrak X^s(M))
\ \middle|\
\evol(v)=\id
\right\}
\]
is weakly closed. Hence
\[
\operatorname{evol}(u)=\id.
\]
Thus \(\varphi\) is again a based loop at the identity.

It remains to show that \(\varphi\) represents the same based homotopy
class as the loops \(\varphi_n\). For that we note that for $s_0$ with $\frac{\on{dim}(M)}{2}+1<s_0<s$
it is shown in the proof of \cite[Lemma~7.1]{BruverisVialard2017}, that after passing to the weaker Sobolev topology:
\[
\varphi_n\longrightarrow \varphi
\qquad\text{in } C^0([0,1],\Diff^{s_0}(M)).
\]
Since \(\Diff^{s_0}(M)\) is a Hilbert manifold, sufficiently \(C^0\)-close
based loops are homotopic relative to their endpoints. Therefore, for
all sufficiently large \(n\),
\begin{equation}\label{eq:equality_induced_classes_in_Diff_s_0}
    [\varphi_n]=[\varphi]
\qquad\text{in } \pi_1(\Diff^{s_0}(M),\id).
\end{equation}
Let
\[
\iota:\Diff^{s}(M)\hookrightarrow \Diff^{s_0}(M)
\]
denote the natural inclusion. By the standard smoothing theorem for
Sobolev diffeomorphism groups, \(\iota\) is a weak homotopy equivalence;
in particular,
\[
\iota_*:\pi_1(\Diff^{s}(M),\id)\longrightarrow \pi_1(\Diff^{s_0}(M),\id)
\]
is injective. Since each \(\varphi_n\) represents \([\alpha]\) in
\(\pi_1(\Diff^{s}(M),\id)\), the equality in~\eqref{eq:equality_induced_classes_in_Diff_s_0} of the induced classes in
\(\pi_1(\Diff^{s_0}(M),\id)\) implies
\[
[\varphi]=[\alpha]
\qquad\text{in } \pi_1(\Diff^{s}(M),\id),
\]
from which we can conclude that
\[
u\in A_{\id,[\alpha]}.
\]
Hence \(A_{\id,[\alpha]}\) is weakly closed. Thus condition~\ref{cond:weaklyclosed} holds, and we can
apply \Cref{thm: periodic geodesic in all homotopy classes of loops} and thus also \Cref{cor:infinitely_many_periodic_geodeisc_in_each_homotopy_class}, which
finishes the proof.
\end{proof}

\subsection{Groups of Sobolev Diffeomorphisms Equipped with Right-Invariant Metrics Induced by Isometry Equivariant Inertia Operators}
Unfortunately the validity of the Palais–Smale-type hypothesis required in \Cref{thm:Lyusternik_half} is presently unknown in the setting of right-invariant metrics on \(\Diff^s(M)\). To still obtain the existence of 
contractible periodic geodesic, we will thus resort to \Cref{prop_dyn_reduction}. In contrast to the previous subsection, we will even obtain the
existence of periodic geodesics on \((\Diff^s(M),G^L)\) in the case where
\(\frac12\leq r<s\), i.e., in the regime where the induced Sobolev metric \(G^L\) is merely a weak Riemannian metric. The
price we pay to apply~\Cref{prop_dyn_reduction}  is that we have to impose suitable equivariance conditions on the
inertia operator \(L\) used in \eqref{eq:def_H_r_metric} to define \(G^L\).

First, we note that by \Cref{prop_dyn_reduction} it suffices to detect closed
finite-dimensional totally geodesic submanifolds of \(\Diff^s(M)\) in order to
obtain periodic geodesics on \((\Diff^s(M),G^L)\). Therefore, we begin by giving
a sufficient symmetry criterion for the inertia operator which guarantees that
the isometry group \(\Isom(M,g)\) is a totally geodesic submanifold of
\(\Diff^s(M)\). We denote the (identity component) of the group of isometries by
\[
    \cK:=\Isom_0(M,g)
\]
and recall that its Lie
algebra 
is precisely the space
\[
    \mathfrak k:=\mathrm{Lie}(\cK)=\Kill(M,g)
\]
of Killing vector fields on \((M,g)\). Next, we introduce the class of \(\cK\)-equivariant inertia operators:

\begin{definition}[\(\cK\)-equivariant inertia operators]
\label{def:K_equiv_inertia_operator}
Let \(L\) be a positive, self-adjoint, invertible, elliptic operator of order
\(2r\), with \(0\leq r\leq s\). We call \(L\) a \(\cK\)-equivariant inertia
operator of order \(2r\) if the following conditions hold:
\begin{enumerate}[label=(\alph*), ref=(\alph*)]
    \item \label{def:K_equiv_inertia_operator_preserves_killing}
    \(L\) preserves the space of Killing fields, i.e.
    \[
        L(\mathfrak k)\subseteq \mathfrak k .
    \]

    \item \label{def:K_equiv_inertia_operator_equivariant}
    \(L\) is \(\cK\)-equivariant, i.e.
    \[
        L(\rho_*u)=\rho_*L(u)
        \qquad
        \text{for all }\rho\in \cK
        \text{ and }u\in H^s(M,TM).
    \]
\end{enumerate}
We denote the space of such operators by \(\cI_\cK(2r)\).
\end{definition}
\begin{example} Returning to~\Cref{ex:laplace_inertia} of $L=(1-\Delta_g)^r$, we note that these inertia operators  
belong to \(\cI_\cK(2r)\) only under certain assumptions on the background metric $g$, e.g., that $g$ is Einstein. This is necessary to guarantee that $L$  preserves the space of Killing fields.  
\end{example}
The next lemma
shows that this example is far from isolated: the class \(\cI_\cK(2r)\) is
infinite-dimensional, although it has infinite codimension inside the full
space of inertia operators of order \(2r\).
\begin{lemma}[Size of the class \(\cI_\cK(2r)\)]
Let $(M,g)$ be a Riemannian manifold with isometry group $\cK$. Then the class \(\cI_\cK(2r)\) is nonempty and infinite-dimensional.
\end{lemma}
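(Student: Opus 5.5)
The plan is to construct elements of \(\cI_\cK(2r)\) explicitly by correcting a natural \(\cK\)-equivariant elliptic operator by a finite-rank smoothing term that enforces the preservation of \(\mathfrak k\). Throughout, \(M\) is closed, as in \Cref{ssec:back:diffeo}. Let \(\Delta_g=-\nabla^*\nabla\) be the connection Laplacian on vector fields, and write \(\rho_*u=T\rho\circ u\circ\rho^{-1}\) for the push-forward of a vector field \(u\) by \(\rho\in\cK\).

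\emph{Step 1: symmetries.} The key observation is that \(\Delta_g\) is built naturally from \(g\), so \(\rho_*\Delta_g u=\Delta_g\rho_*u\) for every isometry \(\rho\). Hence every spectral function \(f(\Delta_g)\) commutes with the \(\cK\)-action. Moreover, \(\cK\) acts on \(L^2(M,TM)\) by unitary operators \(u\mapsto\rho_*u\). It preserves the finite-dimensional subspace \(\mathfrak k\), since \(\rho_*X\) is Killing whenever \(X\) is; indeed \(\rho_*X=-\mathrm{Ad}_\rho X\) under the identification \(\mathfrak k=\mathrm{Lie}(\cK)\). Consequently the \(L^2\)-orthogonal projection \(P\) onto \(\mathfrak k\) commutes with every \(\rho_*\). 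Because Killing fields are smooth, \(P\) is a finite-rank smoothing operator.

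\emph{Step 2: construction.} Let \(A\) be any positive self-adjoint operator on \(\mathfrak k\) commuting with \(\mathrm{Ad}(\cK)\); for instance \(A=\lambda\,\mathrm{id}\) with \(\lambda>0\). For \(r\ge 0\), set
\[
L_A:=P A P+(1-P)(1-\Delta_g)^r(1-P).
\]
We then check the required properties one by one.
\begin{enumerate}
\item \(L_A-(1-\Delta_g)^r\) is a finite sum of terms each containing \(P\), hence smoothing. Therefore \(L_A\) is a pseudodifferential operator of order \(2r\) with the same principal symbol as \((1-\Delta_g)^r\), so it is elliptic.
\item \(L_A\) is self-adjoint, being a sum of self-adjoint operators.
\item \(L_A\) is positive and invertible, because it is block diagonal with respect to \(\mathfrak k\oplus\mathfrak k^\perp\), and each block is positive and invertible.
\item \(L_A(\mathfrak k)=A(\mathfrak k)\subseteq\mathfrak k\), which is condition \ref{def:K_equiv_inertia_operator_preserves_killing}.
\item \(L_A\) is \(\cK\)-equivariant, since \(P\), \(A\) and \((1-\Delta_g)^r\) all commute with \(\rho_*\); this is condition \ref{def:K_equiv_inertia_operator_equivariant}.
\end{enumerate}
This proves nonemptiness.

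\emph{Step 3: infinite-dimensionality.} Replace \((1-\Delta_g)^r\) by
\[
(1-\Delta_g)^r+\sum_{j} c_j E_j,\qquad c_j\ge 0,
\]
where the \(E_j\) are the spectral projections of \(\Delta_g\) onto its eigenspaces. There are infinitely many distinct eigenvalues, since \(L^2(M,TM)\) is infinite-dimensional and the eigenspaces are finite-dimensional. Each \(E_j\) is finite rank, smoothing and \(\cK\)-equivariant. Taking only finitely many \(c_j\neq 0\), every argument of Step 2 goes through unchanged. Choosing the nonzero \(c_j\) among eigenvalues whose eigenspaces are not contained in \(\mathfrak k\), the resulting operators restricted to \(\mathfrak k^\perp\) differ by distinct nonzero multiples of \((1-P)E_j(1-P)\), and these are linearly independent. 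This yields an infinite-dimensional family (a convex cone spanning an infinite-dimensional space) inside \(\cI_\cK(2r)\).

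\emph{Main obstacle.} The delicate point is the interplay between ellipticity and the projection correction. One has to confirm that \(P\) and the \(E_j\) are genuinely smoothing, so that the principal symbol is unchanged, and that the \(\cK\)-action commutes with \(P\), which uses the fact that \(\cK\) preserves both \(\mathfrak k\) and the \(L^2\)-inner product. One should also remark that without the correction by \(P\), the operator \((1-\Delta_g)^r\) itself need not preserve \(\mathfrak k\) unless, for instance, \(g\) is Einstein. The correction by \(P\) is exactly what removes this restriction.
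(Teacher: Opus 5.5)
Your proposal follows the paper's proof closely. For nonemptiness the paper uses the same block-diagonal compression $P_{\mathfrak k}LP_{\mathfrak k}+(1-P_{\mathfrak k})L(1-P_{\mathfrak k})$ of $L=(1-\Delta_g)^r$. Your $PAP$, with an arbitrary $\operatorname{Ad}(\cK)$-equivariant $A$ in place of $PLP$, is a harmless generalization. For infinite-dimensionality the paper likewise adds positive multiples of finite-rank, smoothing, $\cK$-equivariant spectral projections.

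\textbf{Issue in Step~3.} The assertion that the operators $(1-P)E_j(1-P)$ are linearly independent does not hold as stated. Your $E_j$ are spectral projections of $\Delta_g$, and these need not commute with $P$. A Killing field satisfies $\nabla^*\nabla X=\operatorname{Ric}(X)$, so unless $g$ is, e.g., Einstein, $X$ is in general not an eigenfield and can have components in several eigenspaces. Suppose some $X\in\mathfrak k$ had the form $e_1-c\,e_2$, with $c\neq0$ and unit vectors $e_1,e_2\notin\mathfrak k$ spanning one-dimensional eigenspaces. Then $(1-P)e_1=c\,(1-P)e_2$, and hence $(1-P)E_1(1-P)=c^2\,(1-P)E_2(1-P)$.

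The conclusion survives, but it needs an argument. Either of the following works.

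\emph{Rank count.} Suppose a finite combination $T=\sum_j a_jE_j$ satisfies $(1-P)T(1-P)=0$. Then $T=PT+(1-P)TP$ has rank at most $2\dim\mathfrak k$. Since $\operatorname{rank}T=\sum_{a_j\neq0}\dim\operatorname{ran}E_j$, at most $2\dim\mathfrak k$ of the $a_j$ are nonzero. A space of finitely supported coefficient vectors, each with at most $m$ nonzero entries, has dimension at most $m$. So the kernel of $T\mapsto(1-P)T(1-P)$ on $\operatorname{span}\{E_j\}$ has dimension at most $2\dim\mathfrak k$, and your compressed projections still span an infinite-dimensional space.

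\emph{Paper's route.} Take the spectral projections $Q_j$ of an operator that already commutes with $P$. For instance, use $PAP+(1-P)(1-\Delta_g)(1-P)$; it has order two, so its resolvent is compact even when $r=0$. Each $Q_j$ is then finite-rank, smoothing and $\cK$-equivariant, and it preserves $\mathfrak k$. Since the $Q_j$ project onto mutually orthogonal nonzero subspaces, they are trivially linearly independent.

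A minor slip: $\rho_*X=\operatorname{Ad}_\rho X$, without a minus sign. This is harmless, since you only use $\rho_*\mathfrak k\subseteq\mathfrak k$.
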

\begin{comment}
\begin{remark}[Codimension of the equivariant class]
Although Lemma~4.5 shows that \(I_K(2r)\) is infinite-dimensional, the equivariance
condition is still highly restrictive. If \(K\neq\{e\}\), then the corresponding linear space
of self-adjoint pseudodifferential operators satisfying
\[
L\rho_*=\rho_*L
\qquad
\text{for all } \rho\in K
\]
has infinite codimension inside the full space of self-adjoint pseudodifferential operators
of order at most \(2r\). This can already be seen at the level of principal symbols:
\(K\)-equivariance forces the principal symbol to be \(K\)-invariant. Since nontrivial
\(K\)-actions impose infinitely many independent linear constraints on smooth functions on
\(S^*M\), the resulting symbol space has infinite codimension. We leave the details to the
reader.
\end{remark}
\end{comment}
\begin{proof}
We start by showing that the space is nonempty. If $g$ is Einstein then the non-emptiness is provided by ~\Cref{ex:laplace_inertia}. In the general case, we 
let
\[
P_{\mathfrak{k}}:L^2(M,TM)\to \mathfrak{k}
\]
be the \(L^2\)-orthogonal projection onto the Lie algebra of $\cK$. Since \(\mathfrak{k}\) is finite-dimensional and consists
of smooth vector fields, \(P_{\mathfrak{k}}\) is a smoothing operator. Moreover,
\(P_{\mathfrak{k}}\) is \(\cK\)-equivariant, because \(\mathfrak{k}\) is \(\cK\)-invariant and the
\(\cK\)-action on \(L^2(M,TM)\) is unitary.

Next we consider again the inertia operator from~\Cref{ex:laplace_inertia}, i.e.,
\(
L=(1-\Delta_g)^r.
\)
Since \(K\) acts
by isometries, \(L\) is positive, self-adjoint, elliptic of order \(2r\), and \(K\)-equivariant. As noted above, in general $L$ does not  preserve the space
of Killing fields and in the following we will perturb the operator to obtain a new operator that satisfies this additional assumption. 
Define
\[
\tilde L
:=
P_{\mathfrak{k}}LP_{\mathfrak{k}}
+
(1-P_{\mathfrak{k}})L(1-P_{\mathfrak{k}}).
\]
Then \(\tilde L\) is self-adjoint and \(\cK\)-equivariant. Moreover, $\tilde L(\mathfrak{k})\subseteq \mathfrak{k}.$ Furthermore,
\[
\tilde L-L
=
-P_{\mathfrak{k}}L(1-P_{\mathfrak{k}})
-
(1-P_{\mathfrak{k}})LP_{\mathfrak{k}}.
\]
Both summands on the right-hand side are smoothing, since they contain the finite-rank
smoothing projection \(P_{\mathfrak{k}}\). Hence \(\tilde L\) and \(L\) have the same principal
symbol. Therefore \(\tilde L\) is elliptic of order \(2r\).

We next check positivity. Write
\[
u=v+w,
\qquad
v=P_{\mathfrak{k}}u\in\mathfrak{k},
\qquad
w=(1-P_{\mathfrak{k}})u\in\mathfrak{k}^{\perp}.
\]
Then
\[
G^{\tilde L}_{\id}(u,u)
=G^{L}_{\id}(v,v)
+
G^{L}_{\id}(w,w).
\]
Since \(L\) is positive and invertible, the right-hand side is strictly positive whenever
\(u\neq 0\). Thus \(\tilde L\) is positive. Since \(\tilde L\) is self-adjoint, elliptic, and has trivial
kernel, it is invertible. Hence
\(
\tilde L\in \cI_\cK(2r),
\)
which proves that \(\cI_\cK(2r)\) is nonempty.

It remains to show that \(\cI_\cK(2r)\) is infinite-dimensional. Let \(D\) be a positive
self-adjoint elliptic \(\cK\)-equivariant operator preserving \(\mathfrak{k}\), which always exists by the above argument. Since
\(D\) has compact resolvent, it has infinitely many finite-dimensional eigenspaces. Let
\(
Q_j:L^2(M,TM)\to E_j
\)
denote the corresponding spectral projections. Each \(Q_j\) is finite-rank, smoothing,
self-adjoint, \(\cK\)-equivariant, and preserves \(\mathfrak{k}\).

The operators \(Q_j\) are linearly independent. For every \(j\) and every \(t>0\), the operator
\(
\tilde L+tQ_j
\)
is still positive, self-adjoint, \(\cK\)-equivariant, and preserves \(\mathfrak{k}\). Moreover,
\(tQ_j\) is smoothing, so it does not change the principal symbol. Therefore \(\tilde L+tQ_j\)
is still elliptic of order \(2r\). Hence
\(
\tilde L+tQ_j\in I_K(2r)
\)
for every \(j\) and every \(t>0\). Since the directions \(Q_j\) are linearly independent,
\(I_K(2r)\) is infinite-dimensional.
\end{proof}

Having fixed this notation, we can now state a criterion ensuring that
\(\Isom(M,g)\) is a totally geodesic subgroup of \(\Diff^s(M)\).
\begin{lemma}[Criterion for the isometry group to be totally geodesic]
\label{prop:isometry_group_totally_geodesic}
Let \((M,g)\) be a closed Riemannian manifold, and let
\(L\in \cI_\cK(2r)\) as in \Cref{def:K_equiv_inertia_operator} with $\frac12\leq r\leq s$. Then
\(\Isom(M,g)\), equipped with the metric induced by restriction of \(G^L\), is a
totally geodesic submanifold of \((\Diff^s(M),G^L)\). %The result continues to hold when replacing $\Diff^s(M)$ with its subgroup of volume preserving diffeomorphisms $\Diff^s_{\mu}(M)$.
\end{lemma}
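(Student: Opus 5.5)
We plan to show that every geodesic of $(\Diff^s(M),G^L)$ starting at a point of $\Isom(M,g)$ with initial velocity tangent to $\Isom(M,g)$ stays in $\Isom(M,g)$. This is equivalent to total geodesicity, since the second fundamental form is then zero. By right-invariance of $G^L$ and since $\Isom(M,g)$ is a subgroup, right translation by an isometry is an isometry of $\Diff^s(M)$ preserving $\Isom(M,g)$. So it suffices to treat geodesics starting at $\id$ with initial velocity $u_0\in\mathfrak k=\Kill(M,g)$. All Killing fields are smooth, so $\Isom(M,g)$ is a compact finite-dimensional embedded submanifold of $\Diff^s(M)$, with tangent space $\mathfrak k\circ\rho$ at $\rho$.

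The main tool is the Euler--Arnold equation for right-invariant metrics. In Eulerian variables $u=\partial_t\varphi\circ\varphi^{-1}$ it reads
\[
\partial_t (Lu) = -\operatorname{ad}_u^{*}(Lu), \qquad\text{i.e.}\qquad \partial_t u = -L^{-1}\operatorname{ad}^{*}_u Lu,
\]
where $\operatorname{ad}^*$ is the $L^2$-coadjoint of the vector field bracket, $\operatorname{ad}^*_u m = \nabla_u m + (\nabla u)^T m + (\operatorname{div} u)\, m$. Well-posedness for $r\ge\frac12$ is given by \Cref{thm:propertiesSobmetrics}, so the geodesic exists locally and is unique. The plan is to construct an explicit candidate solution inside $\mathfrak k$, invoke uniqueness, and then integrate the resulting Killing field.

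For the candidate, consider the finite-dimensional Euler--Arnold equation on the compact Lie group $\cK$ with the left/right-invariant metric $\langle v,w\rangle_{\mathfrak k}:=\int_M g(v,Lw)\,\mathrm{dvol}_g$ restricted to $\mathfrak k$. Condition \ref{def:K_equiv_inertia_operator_preserves_killing} makes this an inner product whose inertia operator is $L|_{\mathfrak k}$. Its solution $u(t)\in\mathfrak k$ solves the restricted equation
\[
\partial_t u = -(L|_{\mathfrak k})^{-1}P_{\mathfrak k}\operatorname{ad}^*_u Lu .
\]
The key step, and the expected main obstacle, is to show that for $u\in\mathfrak k$ one already has $\operatorname{ad}^*_u Lu\in\mathfrak k$, so that the projection $P_{\mathfrak k}$ can be dropped. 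Since $\operatorname{div}u=0$, we have $\operatorname{ad}^*_u m=\mathcal L_u m^\flat$ read through $g$ (the Lie derivative of the one-form $m^\flat$), and for Killing $u$ this equals $\frac{d}{d\epsilon}\big|_0(\rho_\epsilon)_*^{-1}m$, where $\rho_\epsilon$ is the flow of $u$.

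For $m=Lu$ with $u\in\mathfrak k$ we then compute, using the $\cK$-equivariance in \ref{def:K_equiv_inertia_operator_equivariant} and $\rho_\epsilon\in\cK$,
\[
\operatorname{ad}^*_u Lu = \tfrac{d}{d\epsilon}\big|_0 L\big((\rho_{\epsilon})^{-1}_* u\big) = -L[u,u]=0 .
\]
Up to sign conventions this vanishes, so the identity holds. More generally, $\operatorname{ad}^*_u Lv = L(\pm[u,v])\in L(\mathfrak k)\subseteq\mathfrak k$ for $u,v\in\mathfrak k$. Hence the finite-dimensional solution solves the full equation, and by uniqueness it is the geodesic.

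Finally, $\varphi=\Evol(u)$ for a curve $u(t)$ in $\mathfrak k$ is a curve in $\cK$, because the flow of time-dependent Killing fields consists of isometries. This proves the claim; the case of general components of $\Isom(M,g)$ follows by the right translation above. The care needed is in the sign and regularity of the $\operatorname{ad}^*$ identity. I would carry this out by pairing $\operatorname{ad}^*_uLu$ against an arbitrary $w\in\mathfrak X^s(M)$ weakly, which avoids having to differentiate $L$ directly.
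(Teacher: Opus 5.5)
Your proposal is correct. The overall architecture matches the paper's: show that one-parameter subgroups of $\cK$ solve the Euler--Arnold equation, i.e.\ $\operatorname{ad}^{\top}_XX=0$, equivalently $[X,LX]=0$ for Killing $X$, and then right-translate. The key identity, however, is obtained by a genuinely different and shorter route.

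The paper gets $[X,LX]=0$ indirectly. It proves $\operatorname{Ad}(\cK)$-invariance of $G^L_{\id}$ on $\mathfrak k$ and combines the resulting $\operatorname{ad}$-invariance with the identity $\int_M g([X,LX],Y)\,\mathrm{dvol}_g+\int_M g(LX,[X,Y])\,\mathrm{dvol}_g=0$ to get $[X,LX]\perp\mathfrak k$. It then needs condition (a) to know $[X,LX]\in\mathfrak k$. You instead use that $\operatorname{ad}^*_um=[u,m]$ for Killing $u$, and differentiate the equivariance relation along the flow of $u$. This gives $[u,Lu]=L[u,u]=0$ directly from (b).

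This buys you something. Since $\operatorname{ad}^*_uLu=0$, your candidate is simply $u\equiv u_0$, so the finite-dimensional Euler--Arnold equation and $P_{\mathfrak k}$ are superfluous. Moreover, hypothesis (a) is never actually needed: equivariance alone makes $\Isom(M,g)$ totally geodesic.

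On the other hand, your global step (ambient geodesics tangent to $\cK$ stay in $\cK$) relies on uniqueness of ambient geodesics. The paper's direction does not: it shows that intrinsic geodesics of $\cK$, which are right translates of one-parameter subgroups by bi-invariance, solve the ambient equation. This matters at the endpoint $r=\frac12$, because the cited well-posedness result is stated only for $r>\frac12$, not for $r\geq\frac12$ as you quote it. You can avoid this within your own argument. The curve $t\mapsto\exp(tu_0)\circ\rho$ is an ambient geodesic contained in $\cK$, hence a geodesic of $\cK$, and by finite-dimensional uniqueness these curves exhaust the geodesics of $\cK$.
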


\begin{proof}
We first note that the condition $\frac12\leq r\leq s$ ensures that the geodesic equation exists and is locally well-posed, cf.~\Cref{thm:propertiesSobmetrics}. Next we show that the metric induced by \(G^L\) on \(\cK\) is bi-invariant.
By condition~\ref{def:K_equiv_inertia_operator_preserves_killing},
\(L(\mathfrak k)\subseteq \mathfrak k\). Hence \(L\) restricts to a positive
definite operator
\[
    L|_{\mathfrak k}\colon \mathfrak k\to \mathfrak k .
\]
For \(X,Y\in\mathfrak k\), the induced inner product at the identity is
\begin{equation}\label{eq:induced_metric_at_identity}
   G^L_{\mathrm{id}}(X,Y)
    =
    \int_M g(X,LY)\,\mathrm{dvol}_g .
\end{equation}

Let \(\rho\in K\). By
condition~\ref{def:K_equiv_inertia_operator_equivariant}, \(L\) commutes with
the push-forward action of \(\rho\). Since \(\rho\) also preserves \(g\) and
\(\mathrm{dvol}_g\), it follows that the restriction of
\(G^L_{\mathrm{id}}\) to \(\mathfrak k\) is \(\operatorname{Ad}(K)\)-invariant. Therefore the
induced metric on \(\cK\) is bi-invariant.

We next show that every one-parameter subgroup of \(\cK\) is a geodesic of
\((\Diff^s(M),G^L)\). Let \(X\in\mathfrak k\) and consider
\begin{equation}\label{eq:varphi_K_Diff}
        \varphi(t)=\exp(tX)\in K\subseteq \Diff^s(M).
\end{equation}
Its right logarithmic derivative is constant and equal to \(X\). Since
\(K\subseteq \Diff^s(M)\) consists of smooth diffeomorphisms, the transpose
adjoint operator \(\operatorname{ad}^{\top}\) is applicable to elements of \(\cK\)
by \cite[Lemma~7.4]{Bauer_2025}. Hence \(\varphi\) is a geodesic of
\((\Diff^s(M),G^L)\) if and only if
\(u=\partial_t\varphi\circ \varphi^{-1}\) solves the associated Euler--Arnold
equation
\[
    \partial_t u+\operatorname{ad}^{\top}_u u=0 .
\]
Since the right logarithmic derivative of \(\varphi\) is the constant vector
field \(u(t)=X\), this reduces to
\begin{equation}\label{eq:geodesic_eq_on_K}
        \operatorname{ad}^{\top}_X X=0 .
\end{equation}
It remains to prove \eqref{eq:geodesic_eq_on_K}.

Since the restriction of \(G^L_{\mathrm{id}}\) to \(\mathfrak k\) is
\(\on{Ad}(K)\)-invariant, it is \(\operatorname{ad}(\mathfrak k)\)-invariant.
Thus, for all \(X,Y,Z\in\mathfrak k\),
\[
    G^L_{\mathrm{id}}([X,Y],Z)
    +
    G^L_{\mathrm{id}}(Y,[X,Z])
    =
    0 .
\]
Taking \(Y=X\), and using \eqref{eq:induced_metric_at_identity} together with
the self-adjointness of \(L\), we obtain
\begin{equation}\label{eq:identity_G_r_mathfrakk}
     \int_M g(LX,[X,Z])\,\mathrm{dvol}_g=0
    \qquad
    \forall Z\in\mathfrak k .
\end{equation}

On the other hand, since \(X\) is a Killing field, its flow preserves \(g\) and
\(\mathrm{dvol}_g\). Hence, for every \(Y\in\mathfrak X(M)\),
\[
    0
    =
    \int_M \mathcal L_X\bigl(g(LX,Y)\,\mathrm{dvol}_g\bigr).
\]
Using \(\mathcal L_Xg=0\), \(\mathcal L_X\mathrm{dvol}_g=0\), and
\(LX\in\mathfrak k\), where the last inclusion follows from
condition~\ref{def:K_equiv_inertia_operator_preserves_killing}, this gives
\begin{equation}\label{eq:integration_by_parts_identity}
    \int_M g([X,LX],Y)\,\mathrm{dvol}_g
    +
    \int_M g(LX,[X,Y])\,\mathrm{dvol}_g
    =
    0
    \qquad
    \forall Y\in\mathfrak X(M).
\end{equation}
In particular, taking \(Y=Z\in\mathfrak k\), we get
\begin{equation}\label{eq:identity_left_right_on_z}
       \int_M g([X,LX],Z)\,\mathrm{dvol}_g
    +
    \int_M g(LX,[X,Z])\,\mathrm{dvol}_g
    =
    0
    \qquad
    \forall Z\in \mathfrak k .
\end{equation}
By \eqref{eq:identity_G_r_mathfrakk}, the second term in
\eqref{eq:identity_left_right_on_z} vanishes. Therefore
\[
    \int_M g([X,LX],Z)\,\mathrm{dvol}_g=0
    \qquad
    \forall Z\in\mathfrak k .
\]
Since \(X,LX\in\mathfrak k\) by
condition~\ref{def:K_equiv_inertia_operator_preserves_killing}, we have
\([X,LX]\in\mathfrak k\). Since the \(L^2\)-inner product is positive definite
on the finite-dimensional space \(\mathfrak k\), we conclude that
\begin{equation}\label{eq:comm_LX_X_zero}
        [X,LX]=0
        \qquad
        \forall X\in\mathfrak k .
\end{equation}

Recall that, by definition of \(\operatorname{ad}^{\top}\), we have
\[
    G^L_{\mathrm{id}}\bigl(\operatorname{ad}^{\top}_X X,Y\bigr)
    =
    G^L_{\mathrm{id}}\bigl(X,[X,Y]\bigr)
    \qquad
    \forall Y\in \mathfrak X(M).
\]
In combination with \eqref{eq:induced_metric_at_identity}, the self-adjointness
of \(L\), and \eqref{eq:integration_by_parts_identity}, this gives
\[
    G^L_{\mathrm{id}}\bigl(\operatorname{ad}^{\top}_X X,Y\bigr)
    =
    -\int_M g([X,LX],Y)\,\mathrm{dvol}_g
    \qquad
    \forall Y\in \mathfrak X(M).
\]
Together with \eqref{eq:comm_LX_X_zero} and the nondegeneracy of
\(G^L_{\mathrm{id}}\), this yields
\[
    \operatorname{ad}^{\top}_X X=0 .
\]
Thus, by \eqref{eq:geodesic_eq_on_K}, the one-parameter subgroup
\(\varphi\) in \eqref{eq:varphi_K_Diff} generated by \(X\) is a geodesic of
\((\Diff^s(M),G^L)\).

Since the induced metric on \(\cK\) is bi-invariant, the geodesics of \(\cK\) are
precisely the right translations of one-parameter subgroups. The ambient metric
\(G^L\) on \(\Diff^s(M)\) is right-invariant, so right translations preserve
ambient geodesics. It follows that every geodesic of \(\cK\) is also a geodesic of
\(\Diff^s(M)\). Hence \(\cK=\Isom_0(M,g)\) is totally geodesic.

Finally, if one works with the full isometry group \(\Isom(M,g)\), each
connected component is a right translate of \(\cK\). Since right translations are
isometries for \(G^L\), the same conclusion holds for \(\Isom(M,g)\).
\end{proof}
Finally, we are in a position to state the main result of this subsection, which  follows directly from~\Cref{prop_dyn_reduction} and ~\Cref{prop:isometry_group_totally_geodesic}:
\begin{corollary}[Infinitely many periodic geodesics on \(\Diff^s(M)\)]
\label{thm:periodic_geodesics_from_isometries}
Let \((M,g)\) be a closed, finite-dimensional Riemannian manifold, let
\(s>\frac{\on{dim}(M)}{2}+1\), and let \(L\in\cI_\cK(2r)\) be an inertia operator defining the
\(G^L\)-metric on \(\Diff^s(M)\) as in \eqref{eq:def_H_r_metric} with $\frac12\leq r\leq s$. Then the
following hold:
\begin{enumerate}
    \item If \(\pi_1(K)\neq 0\), then \((\Diff^s(M),G^L)\) admits a closed
    geodesic in each nontrivial homotopy class represented by a loop in \(\cK\).

    \item If there exists \(q\geq 2\) such that \(\pi_q(K)\neq 0\), then
    \((\Diff^s(M),G^L)\) admits a contractible periodic geodesic.
\end{enumerate}
In each case, there exist infinitely many geometrically distinct periodic
geodesics of the corresponding type.
%The result continues to hold when replacing $\Diff^s(M)$ with its subgroup of volume preserving diffeomorphisms $\Diff^s_{\mu}(M)$.
\end{corollary}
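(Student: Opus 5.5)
The plan is to apply \Cref{prop_dyn_reduction} with the compact finite-dimensional totally geodesic submanifold $\bar\cM:=\cK=\Isom_0(M,g)$. This submanifold is supplied by \Cref{prop:isometry_group_totally_geodesic}.

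\emph{Verifying the hypotheses of \Cref{prop_dyn_reduction}.} Since $M$ is closed, the Myers--Steenrod theorem gives that $\Isom(M,g)$ is a compact Lie group. Hence $\cK$ is a compact, connected, finite-dimensional Lie group. It sits inside $\Diff^s(M)$ as a smoothly embedded submanifold: it consists of smooth diffeomorphisms, and the orbit map of the smooth action is an injective immersion of a compact manifold, hence an embedding. \Cref{prop:isometry_group_totally_geodesic} shows that $\cK$ is totally geodesic for $G^L$, provided $L\in\cI_\cK(2r)$ and $\frac12\le r\le s$. The remaining hypothesis of \Cref{prop_dyn_reduction} is the existence of the Levi-Civita connection of $G^L$. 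For $r<s$ the metric is only weak, so this is not automatic. I would obtain it from \Cref{thm:propertiesSobmetrics}: local well-posedness of the geodesic equation for $r>\frac12$ relies on the existence of the transpose $\operatorname{ad}^\top$, equivalently the Levi-Civita connection. This is the same input already used via \cite[Lemma~7.4]{Bauer_2025} in the proof of \Cref{prop:isometry_group_totally_geodesic}. I expect the endpoint $r=\frac12$ to be the main technical obstacle, since \Cref{thm:propertiesSobmetrics} is stated for $r>\frac12$. There I would argue directly that the geodesic spray exists by citing the literature on the Euler--Arnold equation. Alternatively, for the purpose of this corollary, it suffices that geodesics in $\cK$ solve the ambient Euler--Arnold equation, which was verified explicitly above.

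\emph{Part (1).} Let $[c]\in\pi_1(\cK)\setminus\{0\}$. Since $\cK$ is compact, Klingenberg's classical result gives a closed geodesic of $(\cK,G^L|_\cK)$ in the class $[c]$. This is simply the minimizer of length in the free homotopy class; for a bi-invariant metric it can even be taken to be a one-parameter subgroup. Total geodesy makes it a closed geodesic of $(\Diff^s(M),G^L)$, and its class in $\Diff^s(M)$ is $\iota_*[c]$. This is exactly the statement "in each nontrivial homotopy class represented by a loop in $\cK$". One caveat needs care: if $\iota_*[c]$ is trivial, the loop is contractible in $\Diff^s(M)$, but it is still nonconstant.

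\emph{Part (2).} If $\pi_q(\cK)\neq0$ for some $q\ge2$, the Lyusternik--Fet theorem on the compact manifold $\cK$ yields a nonconstant contractible closed geodesic in $\cK$. Again by total geodesy, this is a periodic geodesic of $\Diff^s(M)$, and it is contractible in $\Diff^s(M)$ because it is contractible in $\cK$.

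\emph{Infinitely many geodesics.} Right translates $\gamma\varphi$, $\varphi\in\Diff^s_0(M)$, are again periodic geodesics of the same type. Indeed, right translation is an isometry and preserves homotopy classes in the connected component. The argument of \Cref{cor:infinitely_many_periodic_geodeisc_in_each_homotopy_class} then applies: $\Diff^s(M)$ is infinite-dimensional and $\operatorname{im}(\gamma)$ is compact, so the stabilizer cannot be all of $\Diff^s_0(M)$. That argument only used the topological group structure and the fact that $\Diff^s(M)$ is a Hilbert manifold, not strongness of the metric, so infinitely many geometrically distinct periodic geodesics of each type follow.
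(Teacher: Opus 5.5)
Your proposal is correct and takes the same route as the paper, which proves this corollary in one line by applying \Cref{prop_dyn_reduction} to the compact totally geodesic submanifold $\cK$ supplied by \Cref{prop:isometry_group_totally_geodesic}, with the infinitely-many statement coming from the right-translation argument of \Cref{cor:infinitely_many_periodic_geodeisc_in_each_homotopy_class}. Your extra checks fill in details the paper leaves implicit: compactness via Myers--Steenrod, the embedding of $\cK$, and the Levi-Civita connection at the endpoint $r=\tfrac12$, which \Cref{thm:propertiesSobmetrics} only covers for $r>\tfrac12$; your fallback, that geodesics of $\cK$ solve the ambient Euler--Arnold equation, is exactly what the reduction needs.
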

\begin{remark}[Periodic geodesics in the smooth diffeomorphism group]
In the setting of \Cref{thm:periodic_geodesics_from_isometries}, the periodic
geodesics obtained from \(K=\Isom_0(M,g)\) are in fact smooth. Indeed, if
\(\gamma\) is a closed geodesic of \(\cK\), equipped with the metric induced by
\(G^L\), then \(\gamma\) is also a geodesic of \((\Diff^s(M),G^L)\) by
\Cref{prop:isometry_group_totally_geodesic}. Since \(\cK\) is a finite-dimensional
Lie group of smooth diffeomorphisms, \(\gamma\) lies entirely in
\(\Diff^\infty(M)\subseteq \Diff^s(M)\).

Moreover, by right-invariance of \(G^L\), for every
\(\varphi\in \Diff^\infty(M)\) the right translate
\[
    t\mapsto \gamma(t)\circ \varphi
\]
is again a periodic geodesic of \((\Diff^s(M),G^L)\), and it also lies entirely
in \(\Diff^\infty(M)\).

This smoothness statement is special to the present construction. In contrast,
the existence result in \Cref{cor:infinitely many periodic geodesics sobolev diffeos}
is obtained from \Cref{thm: periodic geodesic in all homotopy classes of loops}
by a variational argument which uses the Hilbert manifold structure of
\(\Diff^s(M)\) and the strongness of the \(G^L\)-metric. That argument produces
periodic geodesics in the Sobolev diffeomorphism group, but does not by itself
imply that they lie in the smooth subgroup \(\Diff^\infty(M)\).
\end{remark}

\begin{remark}[Examples for the homotopy conditions on \(K=\Isom_0(M,g)\)]
\label{rem:some_pi_q_nopn_zero_of_Isom}
The conditions \(\pi_1(K)\neq 0\) and \(\pi_q(K)\neq 0\) for some \(q\geq 2\)
hold for the following non-exclusive list of Riemannian manifolds:
\begin{enumerate}
    \item for \(M=S^1\) with its standard metric. Then
    \[
        K=\Isom_0(S^1,g_{\mathrm{std}})\cong SO(2)\cong S^1,
    \]
    and hence \(\pi_1(K)\cong\mathbb Z\);

    \item for \(M=S^n\), \(n\geq 2\), with the round metric. Then
    \[
        K=\Isom_0(S^n,g_{\mathrm{round}})\cong SO(n+1),
    \]
    so \(\pi_1(K)\cong\mathbb Z/2\) and also
    \[
        \pi_3(K)\neq 0;
    \]

    \item for \(M=T^n\) with a flat metric. Then
    \[
        K=\Isom_0(T^n,g_{\mathrm{flat}})\cong T^n,
    \]
    and hence \(\pi_1(K)\cong\mathbb Z^n\);

    \item for total spaces \(M\) of principal \(S^1\)-bundles
    \[
        S^1\hookrightarrow M\to B
    \]
    equipped with an \(S^1\)-invariant connection metric, provided that the
    fiber represents a nontrivial element of \(\pi_1(M)\). The fiberwise
    \(S^1\)-action is then by isometries, and evaluation at a point detects the
    fiber class. Hence \(\pi_1(K)\neq0\). For instance, the lens spaces
    \[
        S^1\hookrightarrow L(k,1)\to S^2,\qquad k\geq 2,
    \]
    with a connection metric give such examples;

    \item for \(M=\mathbb{CP}^m\), \(m\geq 1\), with the Fubini--Study metric.
    Then
    \[
        K=\Isom_0(\mathbb{CP}^m,g_{\mathrm{FS}})\cong PU(m+1),
    \]
    and hence
    \[
        \pi_1(K)\cong \mathbb Z/(m+1)\mathbb Z,
        \qquad
        \pi_3(K)\cong\mathbb Z;
    \]

    \item more generally, for compact homogeneous spaces \(M=G/H\) with a
    \(G\)-invariant metric, whenever the induced action of \(G\) on \(M\) gives
    a nontrivial class in some \(\pi_q(\Isom_0(M,g))\), one obtains
    \[
        \pi_q(K)\neq0.
    \]
    In particular, compact symmetric spaces with non-abelian transvection group
    give many examples with \(\pi_3(K)\neq0\).
\end{enumerate}
\end{remark}

\subsection{A Synthetic Example for the Lyusternik–-Fet Theorem}
\label{sec:ex:synthetic}
We conclude this section with a synthetic example for which the Palais--Smale
condition modulo right translations can be verified explicitly, which in turn allows us to apply~
\Cref{thm:Lyusternik_half}.

Therefore let \(\cH\) be an infinite-dimensional separable Hilbert space and let
\(\cK\) be a compact non-aspherical Lie group. We consider the Hilbert Lie group
\[
        \cG := \cH \times \cK
\]
with product group structure \((h,A)(k,B) := (h+k,AB)\) and Lie algebra
\[
        \mathfrak g = \cH\oplus \mathfrak{k}.
\]
Next we choose a non-zero linear map
\begin{equation}\label{def:synth_example_of_C}
            \chi:\mathfrak{k}\longrightarrow \cH
\end{equation}
and a constant \(\lambda>0\). We define an inner product on \(\mathfrak g\) by
\[
        \big\langle (h,X),(k,Y)\big\rangle_{\mathfrak g}
        :=
        \langle h+\chi(X),k+\chi(Y)\rangle_{\cH}
        +
        \lambda\langle X,Y\rangle_{\mathfrak{k}},
\]
where \(\langle\cdot,\cdot\rangle_{\mathfrak{k}}\) is a fixed
bi-invariant inner product on \(\mathfrak{k}\). Extending this inner
product by right translations defines a smooth strong right-invariant
Riemannian metric on \(\cG\).  For a curve
\[
        \gamma(t)=(x(t),A(t))
\]
the corresponding energy functional is 
\begin{equation}\label{eq:synthetic}
        E(x,A)
        =
        \frac12\int_0^1
        \left(
        \|\dot x+\chi(u)\|_{\cH}^2
        +
        \lambda\|u\|_{\mathfrak{k}}^2
        \right)\,\mathrm{d} t ,
\end{equation}
where
\[
        u(t):=\dot A(t)A(t)^{-1}\in\mathfrak{k}
\]
is the right logarithmic derivative of the \(\cK\)-component. Next, we show that the energy functional satisfies the Palais--Smale condition:
%We first collect a useful result on energy functionals on finite-dimensional manifolds:
% \begin{lemma}
% \label{lem:synthetic-fd-ps}
% Let \(M\) be a compact finite-dimensional manifold and let
% \(L:TM\to\mathbb R\) be a smooth Lagrangian which is uniformly positive
% definite and quadratic in the velocity variable. Then the corresponding energy
% functional on \(H^1\)-loops in \(M\) satisfies the Palais--Smale condition.
% \end{lemma}

% \begin{proof}
% This is the standard Palais--Smale compactness theorem for smooth uniformly
% positive definite Lagrangians on compact finite-dimensional manifolds.
% Bounded action gives an \(H^1\)-bound. Compactness of \(M\) gives, after
% passing to a subsequence, uniform convergence and weak \(H^1\)-convergence.
% The Palais--Smale condition excludes loss of kinetic energy and hence upgrades
% weak \(H^1\)-convergence to strong \(H^1\)-convergence.
% \end{proof}

\begin{proposition}
\label{prop:synthetic-ps}
The energy functional \(E\), as defined in~\eqref{eq:synthetic}, satisfies the Palais--Smale condition modulo right
translations on \(\Lambda_0\cG\).
\end{proposition}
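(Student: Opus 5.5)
The plan is to exploit the product structure. Right translation by $(h,B)\in\cG$ acts by $x\mapsto x+h$ and $A\mapsto AB$, and leaves $\dot x$ and $u=\dot A A^{-1}$ unchanged, so it preserves both $E$ and $\mathrm dE$. Given a $(PS)_c$ sequence $\gamma_n=(x_n,A_n)\subset\Lambda_0\cG$, I first right-translate by $g_n:=\gamma_n(0)^{-1}=(-x_n(0),A_n(0)^{-1})$. This gives a new $(PS)_c$ sequence with $x_n(0)=0$ and $A_n(0)=e$, since $E$ and the norm of $\mathrm dE$ are invariant under these isometries. It then suffices to show that this normalized sequence has an $H^1$-convergent subsequence. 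The limit lies in $\Lambda_0\cG$ because components of $\Lambda\cG$ are open and closed.

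Set $w_n:=\dot x_n+\chi(u_n)\in L^2(S^1,\cH)$, so that $E(\gamma_n)=\tfrac12\int(\|w_n\|^2+\lambda\|u_n\|^2)$. Boundedness of $E(\gamma_n)$ gives $L^2$-bounds on $w_n$ and $u_n$. Since $A_n$ is a bounded sequence in $H^1(S^1,\cK)$ with $\cK$ compact, after passing to a subsequence we get $A_n\rightharpoonup A$ weakly in $H^1$ and uniformly. Moreover, since $\chi$ has finite rank, $\chi(u_n)\rightharpoonup\chi(u)$ in the finite-dimensional space $L^2(S^1,\operatorname{im}\chi)$.

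\textbf{Step 1: the $\cH$-direction.} For variations $(\delta x,0)$ we have $\mathrm dE(\gamma_n)(\delta x,0)=\int\langle w_n,\dot{\delta x}\rangle\,\mathrm dt$. Taking the supremum over $\|\delta x\|_{H^1}\le1$ shows that this controls $\|w_n-\bar w_n\|_{L^2}$, where $\bar w_n=\int w_n=\chi(\bar u_n)$ because $\int\dot x_n=0$. Hence
\[
w_n-\chi(\bar u_n)\to0\quad\text{in }L^2,
\]
and $\bar u_n$ is bounded in the finite-dimensional space $\mathfrak k$, so a subsequence converges. Consequently $\dot x_n=w_n-\chi(u_n)$ converges strongly in $L^2$ as soon as $\chi(u_n)$ does. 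Since $x_n(0)=0$, this yields $x_n\to x$ in $H^1(S^1,\cH)$. So the infinite-dimensional factor is controlled entirely by the finite-dimensional one, and no compactness in $\cH$ is needed.

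\textbf{Step 2: the $\cK$-direction (main obstacle).} For variations $(0,\xi A_n)$ with $\xi\in H^1(S^1,\mathfrak k)$ we have $\delta u=\dot\xi+[\xi,u_n]$, up to a sign convention. This gives
\[
\mathrm dE(\gamma_n)(0,\xi A_n)=\int\big\langle \chi^*w_n+\lambda u_n,\ \dot\xi+[\xi,u_n]\big\rangle_{\mathfrak k}\,\mathrm dt .
\]
By Step 1 we may replace $\chi^*w_n$ by the convergent constants $\chi^*\chi(\bar u_n)$, at the cost of an error $o(1)\|\xi\|_{H^1}$. What remains is essentially the derivative of the energy of the compact finite-dimensional group $\cK$ with its bi-invariant metric (scaled by $\lambda$), perturbed by a term linear in $u$ with convergent coefficient; in a local chart this term is a lower-order perturbation of the Dirichlet energy. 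I would then run the classical finite-dimensional argument. Since $A_n\to A$ uniformly, for large $n,m$ we write $A_m=\exp(\xi_{nm})A_n$ with $\xi_{nm}$ small in $C^0$ and bounded in $H^1$. We test $\mathrm dE(\gamma_n)$ and $\mathrm dE(\gamma_m)$ against $\xi_{nm}$ and subtract. The $\lambda\int\|\dot\xi_{nm}\|^2$ term dominates, while the bracket terms $[\xi,u]$ and the linear $\chi^*\chi(\bar u_n)$ terms are $o(1)$ because $\xi_{nm}\to0$ uniformly and $u_n$ is $L^2$-bounded. This shows that $(u_n)$ is Cauchy in $L^2$, i.e.\ $A_n\to A$ strongly in $H^1$. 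This is where I expect the real work: making the chart-wise subtraction precise with the right-invariant (rather than Euclidean) parametrization.

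\textbf{Step 3: conclusion.} Strong $L^2$-convergence of $u_n$ gives strong convergence of $\chi(u_n)$, hence by Step 1 $x_n\to x$ in $H^1$. Together with Step 2 this gives $\gamma_n g_n\to(x,A)$ in $\Lambda_0\cG$, which is the Palais--Smale condition modulo right translations of \Cref{def:ps_c_mod_right_trans}.
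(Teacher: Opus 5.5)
Your proposal is correct and follows the paper's proof: the same normalization by right translation, the same $\cH$-variation argument giving $w_n-\chi(\bar u_n)\to0$ in $L^2$ with $\bar u_n$ ranging in the finite-dimensional $\mathfrak k$, and the same replacement of $\chi^*w_n$ by constants in the $\cK$-variations. The one difference is in the $\cK$-factor: the paper shows that $(A_n)$ is a Palais--Smale sequence for the auxiliary Tonelli functional $F_{\bar p}$ on $\Omega_e\cK$ and then cites the classical finite-dimensional Palais--Smale theorem, whereas you prove that step directly via $A_m=\exp(\xi_{nm})A_n$. Your argument does close, using $\int_0^1\dot\xi_{nm}\,\mathrm dt=0$, $\|\xi_{nm}\|_{C^0}\to0$, and $u_m-u_n=\dot\xi_{nm}+o(1)$ in $L^2$. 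One minor slip: $L^2(S^1,\operatorname{im}\chi)$ is not a finite-dimensional space, but you never use that claim.
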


\begin{proof}
Since \(\cG\) is a Hilbert Lie group, right translations identify the quotient
\(\Lambda_0\cG/\cG\) with the based loop space \(\Omega_e\cG\). Therefore it is
enough to prove the Palais--Smale condition on
\[
        \Omega_e\cG
        =
        \Omega_0\cH\times\Omega_e\cK,
\]
where
\[
        \Omega_0\cH
        :=
        \{x\in H^1([0,1],\cH):x(0)=x(1)=0\}.
\]
Therefore, let
\[
        \gamma_n=(x_n,A_n)
\]
be a Palais--Smale sequence for \(E\) on \(\Omega_e\cG\). Define
\begin{equation}\label{eq:def_u_n_p_n_in_synth_example}
       u_n:=\dot A_nA_n^{-1},
        \qquad
        p_n:=\dot x_n+\chi(u_n) .
\end{equation}
Since \(E(\gamma_n)\) is bounded, the sequences \((p_n)\) and \((u_n)\) are
bounded in \(L^2([0,1],\cH)\) and \(L^2([0,1],\mathfrak{k})\), respectively.
In particular, \((A_n)\) is bounded in \(H^1([0,1],\cK)\).

We first use variations in the \(\cH\)-direction. For every
\(\xi\in\Omega_0\cH\), we have
\begin{equation}\label{eq:express_de_in_x}
    d_xE(x_n,A_n)(\xi)
        =
        \int_0^1
        \langle p_n,\dot\xi\rangle_{\cH}\,\mathrm{d} t .
\end{equation}
Since \((\gamma_n)\) is a Palais--Smale sequence, this expression tends to zero
uniformly for \(\xi\) bounded in \(H^1\), that is
\begin{equation}\label{eq:uniformly_to_zeor_dE}
  \varepsilon_n:=  \sup_{\xi\in \Omega_0\cH\,,\,  \|\xi\|_{H^1}\le 1}
    \left|
        \int_0^1 \langle p_n(t),\dot\xi(t)\rangle_{\cH}\,\mathrm{d} t
    \right|
    \longrightarrow 0 .
\end{equation}
 Next we note that the derivative map
\[
        \Omega_0\cH\longrightarrow L^2([0,1],\cH),
        \qquad
        \xi\longmapsto \dot\xi,
\]
identifies \(\Omega_0\cH\) with the closed subspace
\[
        L^2_0([0,1],\cH)
        :=
        \left\{
        v\in L^2([0,1],\cH):
        \int_0^1 v(t)\,\mathrm{d} t=0
        \right\}.
\]
Here we denote by \(\Pi_0:L^2([0,1],\cH)\to L^2_0([0,1],\cH)\) the orthogonal projection onto this subspace.
Next we choose \(\xi_n\in\Omega_0\cH\) such that \(\dot\xi_n=\Pi_0(p_n)\). By Poincaré's
inequality it holds that
\[
        \|\xi_n\|_{H^1}
        \le C\|\Pi_0(p_n)\|_{L^2},
\]
for some constant $C>0$.
In combination with~\eqref{eq:uniformly_to_zeor_dE}, the identity
\(\dot\xi_n=\Pi_0p_n\), and the definition of \(\Pi_0\), this implies that
\[
        \|\Pi_0(p_n)\|_{L^2}^2
        \le
        C\varepsilon_n\|\Pi_0(p_n)\|_{L^2}.
\]
Hence by using again \eqref{eq:uniformly_to_zeor_dE} we have \[
\Pi_0(p_n)\to0 \quad \text{strongly in } L^2([0,1],\cH).
\]
By setting $\bar p_n:=\int_0^1 p_n(t)\,\mathrm{d} t$ for the mean of $p_n$ this convergence statement is equivalent to 
\begin{equation}\label{eq:convergence of p_n_to_bar_p}
    p_n-\bar p_n\to0
        \qquad\text{strongly in }L^2([0,1],\cH).
\end{equation}
Since \(x_n\) is a based loop, it holds that \(x_n(0)=x_n(1)=0\); thus we have
\[
        \int_0^1 \dot x_n(t)\,\mathrm{d}t = 0.
\]
From this we can conclude, by the definitions of \(u_n\) and \(p_n\) in
\eqref{eq:def_u_n_p_n_in_synth_example} and the fact that
\(\chi\) (see \eqref{def:synth_example_of_C}), as a bounded linear operator, commutes with the Bochner integral, that
\[
        \bar p_n
        =
        \chi\left(\int_0^1 u_n(t)\,\mathrm{d}t \right).
\]
By~\eqref{eq:def_u_n_p_n_in_synth_example} the right-hand side is an element of the  finite-dimensional subspace
\(\chi(\mathfrak{k})\subset \cH\), where $\chi$ is as in~\eqref{def:synth_example_of_C}. Thus $(\bar p_n)$ is a bounded sequence in the finite-dimensional subspace
\(\chi(\mathfrak{k})\subset \cH\). Hence, after passing to a subsequence, we may
therefore assume that
\[
        \bar p_n\longrightarrow \bar p
        \quad\text{strongly in }\cH
\]
for some \(\bar p\in \chi(\mathfrak{k})\). Consequently if we regard $\bar p_n, \bar p$ as constant $\cH$-valued functions on $[0,1]$ we have
\begin{equation}\label{eq:conv_bar_p_n_to_bar_p}
    \bar p_n\longrightarrow \bar p
        \quad\text{strongly in }L^2([0,1],\cH).
\end{equation}
We now consider variations in the \(\cK\)-direction. Let \(\eta\) be a
variation field along \(A_n\), and write \(v:=\eta A_n^{-1}.\)
Then the variation of the right logarithmic derivative is by standard computations (see for example~\cite{MardsenRatiuMechanics})
\[
        \delta u_n=\dot v+[v,u_n].
\]
The \(\cK\)-part of the first variation of $E$ on $\Omega_e\cG$ is therefore
\[
        d_AE(x_n,A_n)[\eta]
        =
        \int_0^1
        \left(
        \lambda\langle u_n,\delta u_n\rangle_{\mathfrak{k}}
        +
        \langle p_n,\chi(\delta u_n)\rangle_{\cH}
        \right)\,\mathrm{d} t .
\]
We next introduce an auxiliary functional on \(\Omega_e\cK\) whose first
variation matches the \(\cK\)-part of \(dE(x_n,A_n)\) up to an \(o(1)\)-term.
Therefore we define
\[
        F_{\bar p}:\Omega_e\cK\to\mathbb R,
        \qquad
        F_{\bar p}(A)
        :=
        \int_0^1
        \left(
        \frac{\lambda}{2}\|u\|_{\mathfrak{k}}^2
        +
        \langle \bar p,\chi(u)\rangle_\cH
        \right)\,\mathrm{d} t,
        \qquad
        u=\dot AA^{-1}.
\]
For \(\eta\in T_{A_n}\Omega_e\cK\), write \(v=\eta A_n^{-1}\). Then
\(\delta u_n=\dot v+[v,u_n]\), and
\[
        dF_{\bar p}(A_n)[\eta]
        =
        \int_0^1
        \left(
        \lambda\langle u_n,\delta u_n\rangle_{\mathfrak{k}}
        +
        \langle \bar p,\chi(\delta u_n)\rangle_\cH
        \right)\,\mathrm{d} t .
\]
Thus
\[
        d_AE(x_n,A_n)[\eta]-dF_{\bar p}(A_n)[\eta]
        =
        \int_0^1
        \langle p_n-\bar p,\chi(\delta u_n)\rangle_\cH\,\mathrm{d} t .
\]
Since \(\chi\) is bounded, \((u_n)\) is bounded in \(L^2\), and
\(H^1([0,1])\hookrightarrow L^\infty([0,1])\), we have
\[
        \|\delta u_n\|_{L^2}\le C\|\eta\|_{H^1}.
\]
Hence by \eqref{eq:convergence of p_n_to_bar_p} we obtain 
\[
        \left|
        d_AE(x_n,A_n)[\eta]-dF_{\bar p}(A_n)[\eta]
        \right|
        \le
        C\|p_n-\bar p\|_{L^2}\|\eta\|_{H^1}
        =
        o(1)\|\eta\|_{H^1}.
\]
Therefore \(dF_{\bar p}(A_n)\to0\). Moreover, \(F_{\bar p}(A_n)\) is bounded
because \((u_n)\) is bounded in \(L^2\); after passing to a subsequence, it
converges. Hence \((A_n)\) is a Palais--Smale sequence for \(F_{\bar p}\) on
\(\Omega_e\cK\).
%Indeed, the difference is
%\[
 %       \int_0^1
  %      \langle p_n-\bar p,C\delta u_n\rangle_H\,dt,
%\]
%and \(\delta u_n=\dot v+[v,u_n]\) is bounded in
%\(L^2([0,1],\mathfrak{k})\) whenever \(\eta\) is bounded in \(H^1\), since
%\((u_n)\) is bounded in \(L^2\) and \(\cK\) is finite-dimensional.
%Thus \((A_n)\) is a Palais--Smale sequence for \(F_{\bar p}\) on \(\Omega_e\cK\).

Since \(F_{\bar p}\) is the action functional of a smooth fiberwise quadratic
Lagrangian with uniformly positive definite quadratic part on the compact
finite-dimensional manifold \(\cK\), the classical Palais--Smale theorem for
Tonelli Lagrangians on compact finite-dimensional manifolds applies; see
e.g.~\cite{klingenberg1995riemannian}. Hence \(F_{\bar p}\) satisfies the
Palais--Smale condition on \(\Omega_e\cK\). Thus, after passing to a further
subsequence,
\[
        A_n\longrightarrow A
        \quad\text{strongly in }H^1([0,1],\cK).
\]
In particular, $u_n\to u:=\dot AA^{-1}$ strongly in \(L^2([0,1],\mathfrak{k})\). Since \(\chi:\mathfrak{k}\to \cH\) as in \eqref{def:synth_example_of_C} is linear, and $\mathfrak k$ is finite-dimensional, $\chi$ is hence bounded. This implies
\[
        \chi u_n\longrightarrow \chi u
        \quad\text{strongly in }L^2([0,1],\cH).
\]
Combining this with the definition of \(p_n\) in
\eqref{eq:def_u_n_p_n_in_synth_example} and the convergence statements
\eqref{eq:conv_bar_p_n_to_bar_p} and
\eqref{eq:convergence of p_n_to_bar_p}, we obtain
\[
        \dot x_n=p_n-\chi u_n
        \longrightarrow \bar p-\chi u
        \quad\text{strongly in }L^2([0,1],\cH).
\]
Since each \(x_n\) is a based loop, in particular \(x_n(0)=0\). Hence
\((x_n)\) converges strongly in \(H^1([0,1],\cH)\). Since also
\(A_n\to A\) strongly in \(H^1([0,1],\cK)\), it follows that
\((x_n,A_n)\) has a strongly convergent subsequence in \(\Omega_e\cG\).
This proves that the Palais--Smale condition modulo right translations holds
for the energy functional on \(\Lambda_0\cG\).
\end{proof}

\begin{corollary}
\label{cor:synthetic-Lyusternik}
The Hilbert half-Lie group \((\cG,G)\) satisfies the assumptions of
\Cref{thm:Lyusternik_half}. In particular, it admits a nonconstant contractible
periodic geodesic.
\end{corollary}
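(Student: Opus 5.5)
To prove \Cref{cor:synthetic-Lyusternik}, we verify the two hypotheses of \Cref{thm:Lyusternik_half} for $\cG=\cH\times\cK$ with the metric built from $\chi$ and $\lambda$. Then we read off the conclusion, and \Cref{cor:infinitely_many_contractible_periodic_geodesics} gives multiplicity for free.

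First, the structural requirements. $\cG$ is a Hilbert Lie group, so in particular a Hilbert half-Lie group. The inner product $\langle\cdot,\cdot\rangle_{\mathfrak g}$ is positive definite, since $\langle (h,X),(h,X)\rangle_{\mathfrak g}=\|h+\chi(X)\|^2+\lambda\|X\|^2$ vanishes only if $X=0$ and then $h=0$. It is also equivalent to the product Hilbert norm on $\cH\oplus\mathfrak k$, because the map $(h,X)\mapsto(h+\chi(X),X)$ is a bounded linear isomorphism with bounded inverse $(k,X)\mapsto(k-\chi(X),X)$; here $\chi$ is bounded since $\mathfrak k$ is finite-dimensional. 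Right translating therefore gives a smooth strong right-invariant metric, as already asserted in the construction.

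Next, hypothesis \ref{it:1 Lyusternik_half_homotopy}. Since $\cH$ is contractible, $\pi_q(\cG)\cong\pi_q(\cK)$ for all $q$. The group $\cK$ is a compact non-aspherical Lie group, so some $\pi_q(\cK)\neq0$ with $q\geq1$. The potential obstacle is that non-asphericity might only give $q=1$ (e.g.\ $\cK=T^n$, which is aspherical and so excluded, versus nonabelian groups). I will handle this with the classical fact that every compact Lie group has $\pi_2=0$, and that a compact connected Lie group which is not a torus has $\pi_3\neq0$, due to an $SU(2)$ or $SO(3)$ subgroup coming from a root. Tori are aspherical, and a compact Lie group is aspherical exactly when its identity component is a torus. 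So "non-aspherical" should be read as providing some $q\geq2$ with $\pi_q(\cK)\neq0$, and in fact $q=3$ works; if needed I will state this interpretation explicitly. Through $\pi_q(\cG)\cong\pi_{q-1}(\Omega_0\cG)$ this yields the nontrivial class $[\Gamma]\in\pi_{q-1}(\Lambda_0\cG)$. Its nontriviality in the free loop space follows from the splitting $\Lambda_0\cG\simeq\cG\times\Omega_e\cG$, which holds because $\cG$ is a genuine Lie group and $\gamma\mapsto(\gamma(0),\gamma\gamma(0)^{-1})$ is a homeomorphism. Under this splitting the inclusion $\Omega_e\cG\hookrightarrow\Lambda_0\cG$ is injective on homotopy groups. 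This injectivity is the point that really must be checked, since \Cref{thm:Lyusternik_half} implicitly requires $[\Gamma]\neq0$ in $\pi_{q-1}(\Lambda_0\cG)$.

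Finally, hypothesis \ref{it:2 Lyusternik_half_rigth_PS_c}. This is exactly \Cref{prop:synthetic-ps}, which gives the Palais--Smale condition modulo right translations at every level, in particular at $c(\Gamma)$. The one thing to check is the reduction used there. Given a $(PS)_c$ sequence $\gamma_n$ in $\Lambda_0\cG$, we set $g_n=\gamma_n(0)^{-1}$. The translates $\gamma_ng_n$ are based loops with the same energy. Since $\cG$ is a Lie group, right translation is an isometry of $\Lambda\cG$, so $\|\dd E(\gamma_ng_n)\|=\|\dd E(\gamma_n)\|\to0$. Restricting to $\Omega_e\cG$ only decreases the norm of the differential, so the sequence is Palais--Smale there and the Proposition applies. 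The resulting limit in $\Omega_e\cG\subset\Lambda_0\cG$ is the required convergent subsequence. With both hypotheses in place, \Cref{thm:Lyusternik_half} yields a nonconstant contractible periodic geodesic of energy $c(\Gamma)>0$.
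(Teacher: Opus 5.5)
Your proposal is correct and follows the paper's proof. The paper checks that the metric is strong and right-invariant, uses contractibility of $\cH$ to get $\pi_q(\cG)\cong\pi_q(\cK)\neq 0$ for some $q\geq 3$, and invokes \Cref{prop:synthetic-ps} for the Palais--Smale hypothesis, exactly as you do. Your additional checks are valid and fill in steps the paper leaves implicit: nontriviality of $[\Gamma]$ in $\pi_{q-1}(\Lambda_0\cG)$ via the retraction $\gamma\mapsto\gamma\gamma(0)^{-1}$, which the paper only asserts, and the transfer of a $(PS)_c$ sequence to $\Omega_e\cG$ using that right translations are isometries of $\Lambda\cG$. Your worry about $q=1$ never arises, since non-aspherical already means, by definition, $\pi_q\neq 0$ for some $q\geq 2$.
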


\begin{proof}
The metric constructed above is smooth, strong, and right-invariant. Moreover,
\(\H\) is contractible, and hence for all $p\geq 2$
\[
        \pi_p(\cG)
        \cong
        \pi_p(\cH)\oplus\pi_p(\cK)
        \cong
        \pi_p(\cK).
\]
As $\cK$ is not aspherical, there exists a $p\geq 3$ so that $\pi_p(\cK)\neq 0$, thus by the equation above \(G\) is not aspherical too. By \Cref{prop:synthetic-ps}, the energy
functional satisfies the Palais--Smale condition modulo right translations on
\(\Lambda_0\cG\). Hence all assumptions of \Cref{thm:Lyusternik_half} are
satisfied.
\end{proof}

\begin{remark}
For a generic choice of \(\chi\), the compact subgroup
\[
        \{0\}\times \cK\subset \cH\times \cK
\]
is not totally geodesic. Indeed, the coupling term \(\chi(u)\) in the energy couples
the \(\cH\)- and \(\cK\)-directions. More explicitly, the restriction of the
inner product to \(\mathfrak{k}\) contains the term
\[
        \langle \chi(X),\chi(Y)\rangle_\cH ,
\]
which is not \(\operatorname{Ad}\)-invariant for a generic choice of \(\chi\).
Consequently, the Euler--Arnold equation does not preserve the subspace
\(\{0\}\oplus\mathfrak{k}\). Thus geodesics initially tangent to the
\(\cK\)-factor need not remain in that factor, and the periodic geodesic
obtained from \Cref{thm:Lyusternik_half} is not produced by the elementary
finite-dimensional reduction principle of \Cref{prop_dyn_reduction}.
\end{remark}

\bibliographystyle{abbrv}
	\bibliography{ref_new}

@misc{bauer2025completeness,
  author        = {Bauer, Martin and Maor, Cy and Wirth, Benedikt},
  title         = {Completeness of reparametrization-invariant {Sobolev} metrics on the space of surfaces},
  year          = {2025},
  archivePrefix = {arXiv},
  eprint        = {2512.01566}
}

@misc{maor2022riemannian,
  author = {Maor, Cy},
  title  = {Riemannian geometry of diffeomorphism groups},
  note   = {Lecture notes from a course held at the Hebrew University},
  year   = {2022}
}

@book{schmeding2022introduction,
  title={An introduction to infinite-dimensional differential geometry},
  author={Schmeding, Alexander},
  volume={202},
  year={2022},
  publisher={Cambridge University Press}
}

@article{bauer2015local,
  author  = {Bauer, Martin and Escher, Joachim and Kolev, Boris},
  title   = {Local and global well-posedness of the fractional order {EPDiff} equation on {$\mathbb{R}^d$}},
  journal = {Journal of Differential Equations},
  volume  = {258},
  number  = {6},
  pages   = {2010--2053},
  year    = {2015},
  doi     = {10.1016/j.jde.2014.11.021}
}

@article{bauer2020well,
  author  = {Bauer, Martin and Bruveris, Martins and Cismas, Emanuel and Escher, Joachim and Kolev, Boris},
  title   = {Well-posedness of the {EPDiff} equation with a pseudo-differential inertia operator},
  journal = {Journal of Differential Equations},
  volume  = {269},
  number  = {1},
  pages   = {288--325},
  year    = {2020},
  doi     = {10.1016/j.jde.2019.12.008}
}

@article{EM70,
    AUTHOR = {Ebin, David G. and Marsden, Jerrold},
     TITLE = {Groups of diffeomorphisms and the motion of an incompressible
              fluid},
   JOURNAL = {Ann. of Math. (2)},
  FJOURNAL = {Annals of Mathematics. Second Series},
    VOLUME = {92},
      YEAR = {1970},
     PAGES = {102--163},
      ISSN = {0003-486X},
   MRCLASS = {57.47 (76.00)},
  MRNUMBER = {271984},
       DOI = {10.2307/1970699},
       URL = {https://doi.org/10.2307/1970699},
}

@article{Str90,
  author    = {M. Struwe},
  title     = {{Existence of periodic solutions of Hamiltonian systems on almost every energy surface}},
  journal   = {Boletim da Sociedade Brasileira de Matemática},
  volume    = {20},
  year      = {1990},
  pages     = {49--58}
}

@book{klingenberg1995riemannian,
  author    = {Klingenberg, Wilhelm P. A.},
  title     = {Riemannian Geometry},
  series    = {De Gruyter Studies in Mathematics},
  volume    = {1},
  edition   = {2nd revised ed.},
  publisher = {Walter de Gruyter},
  address   = {Berlin},
  year      = {1995},
  doi       = {10.1515/9783110905120}
}

@article {Abbo13Lect,
	AUTHOR = {Abbondandolo, Alberto},
	TITLE = {Lectures on the free period {L}agrangian action functional},
	JOURNAL = {J. Fixed Point Theory Appl.},
	FJOURNAL = {Journal of Fixed Point Theory and Applications},
	VOLUME = {13},
	YEAR = {2013},
	NUMBER = {2},
	PAGES = {397--430},
	ISSN = {1661-7738,1661-7746},
	MRCLASS = {37J45},
	MRNUMBER = {3122334},
	DOI = {10.1007/s11784-013-0128-1},
	URL = {https://doi.org/10.1007/s11784-013-0128-1},
}

@misc{HopfRinowHalfLiegroups,
  author        = {Maier, Levin and Ruscelli, Francesco},
  title         = {The {Hopf--Rinow} Theorem and {Ma{\~n}{\'e}}'s Critical Value for Magnetic Geodesics on Half {Lie}-Groups},
  year          = {2025},
  archivePrefix = {arXiv},
  eprint        = {2510.19323}
}

@misc{TonelliHalfLiegroups,
  author        = {Maier, Levin and Ruscelli, Francesco},
  title         = {{On {Ma{\~n}{\'e}}'s Critical Value for Tonelli Lagrangians on Half {Lie}-Groups}},
  year          = {2025},
  archivePrefix = {arXiv},
  eprint        = {2511.13428}
}

@book{La99,
	author = {Lang, Serge},
	doi = {10.1007/978-1-4612-0541-8},
	isbn = {0-387-98593-X},
	mrclass = {53-01 (58-01)},
	mrnumber = {1666820},
	mrreviewer = {Man\ Chun\ Leung},
	pages = {xviii+535},
	publisher = {Springer-Verlag, New York},
	series = {Graduate Texts in Mathematics},
	title = {Fundamentals of differential geometry},
	url = {https://doi.org/10.1007/978-1-4612-0541-8},
	volume = {191},
	year = {1999}}

@article{misiolek2010fredholm,
  author  = {Misio{\l}ek, Gerard and Preston, Stephen C.},
  title   = {Fredholm properties of {Riemannian} exponential maps on diffeomorphism groups},
  journal = {Inventiones Mathematicae},
  volume  = {179},
  number  = {1},
  pages   = {191--227},
  year    = {2010},
  doi     = {10.1007/s00222-009-0217-3}
}

@article{escher2014right,
  author  = {Escher, Joachim and Kolev, Boris},
  title   = {Right-invariant {Sobolev} metrics of fractional order on the diffeomorphism group of the circle},
  journal = {Journal of Geometric Mechanics},
  volume  = {6},
  number  = {3},
  pages   = {335--372},
  year    = {2014},
  doi     = {10.3934/jgm.2014.6.335}
}

@article{bauer2014homogeneous,
  author  = {Bauer, Martin and Bruveris, Martins and Michor, Peter W.},
  title   = {The homogeneous {Sobolev} metric of order one on diffeomorphism groups on the real line},
  journal = {Journal of Nonlinear Science},
  volume  = {24},
  number  = {5},
  pages   = {769--808},
  year    = {2014},
  doi     = {10.1007/s00332-014-9204-y}
}

@article{marquis2018half,
  author  = {Marquis, Timoth{\'e}e and Neeb, Karl-Hermann},
  title   = {Half-{Lie} groups},
  journal = {Transformation Groups},
  volume  = {23},
  number  = {3},
  pages   = {801--840},
  year    = {2018},
  doi     = {10.1007/s00031-018-9485-6}
}

@book {HoferZehnderBook,
    AUTHOR = {Hofer, Helmut and Zehnder, Eduard},
     TITLE = {Symplectic invariants and {H}amiltonian dynamics},
    SERIES = {Birkh\"auser Advanced Texts: Basler Lehrb\"ucher.
              [Birkh\"auser Advanced Texts: Basel Textbooks]},
 PUBLISHER = {Birkh\"auser Verlag, Basel},
      YEAR = {1994},
     PAGES = {xiv+341},
      ISBN = {3-7643-5066-0},
   MRCLASS = {58-02 (34C25 57R15 58E05 58F05 70H05)},
  MRNUMBER = {1306732},
MRREVIEWER = {Daniel\ M.\ Burns, Jr.},
       DOI = {10.1007/978-3-0348-8540-9},
       URL = {https://doi.org/10.1007/978-3-0348-8540-9},
}

@article{HoferZehnder1987,
  author    = {H. Hofer and E. Zehnder},
  title     = {{Periodic solutions on hypersurfaces and a result by C. Viterbo}},
  journal   = {Inventiones Mathematicae},
  volume    = {90},
  pages     = {1--9},
  year      = {1987},
  doi       = {10.1007/BF01389030},
  publisher = {Springer},
}

@article {LystFetThm51,
    AUTHOR = {Lyusternik, L. A. and Fet, A. I.},
     TITLE = {Variational problems on closed manifolds},
   JOURNAL = {Doklady Akad. Nauk SSSR (N.S.)},
  FJOURNAL = {Doklady Akad. Nauk SSSR (N.S.)},
    VOLUME = {81},
      YEAR = {1951},
     PAGES = {17--18},
   MRCLASS = {49.0X},
  MRNUMBER = {44760},
MRREVIEWER = {L.\ C.\ Young},
}

@article{Arnold66,
     author = {Arnold, V. I.michor},
     title = {Sur la g\'eom\'etrie diff\'erentielle des groupes de {Lie} de dimension infinie et ses applications \`a l'hydrodynamique des fluides parfaits},
     journal = {Annales de l'Institut Fourier},
     pages = {319--361},
     publisher = {Institut Fourier},
     address = {Grenoble},
     volume = {16},
     number = {1},
     year = {1966}
}

@article{Bauer_2025,
  author  = {Bauer, Martin and Harms, Philipp and Michor, Peter W.},
  title   = {Regularity and completeness of half-{Lie} groups},
  journal = {Journal of the European Mathematical Society},
  year    = {2025},
  note    = {Published online first},
  doi     = {10.4171/JEMS/1587}
}

@article{Vi08,
    AUTHOR = {Vizman, Cornelia},
     TITLE = {Geodesic equations on diffeomorphism groups},
   JOURNAL = {SIGMA Symmetry Integrability Geom. Methods Appl.},
  FJOURNAL = {SIGMA. Symmetry, Integrability and Geometry. Methods and
              Applications},
    VOLUME = {4},
      YEAR = {2008},
     PAGES = {Paper 030, 22},
      ISSN = {1815-0659},
   MRCLASS = {37K65 (35A30 35Q35 58D05)},
  MRNUMBER = {2393297},
MRREVIEWER = {Stephen\ Carl\ Preston},
       DOI = {10.3842/SIGMA.2008.030},
       URL = {https://doi.org/10.3842/SIGMA.2008.030},
}

@book {MardsenRatiuMechanics,
    AUTHOR = {Marsden, Jerrold E. and Ratiu, Tudor S.},
     TITLE = {Introduction to mechanics and symmetry},
    SERIES = {Texts in Applied Mathematics},
    VOLUME = {17},
   EDITION = {Second},
      NOTE = {A basic exposition of classical mechanical systems},
 PUBLISHER = {Springer-Verlag, New York},
      YEAR = {1999},
     PAGES = {xviii+582},
      ISBN = {0-387-98643-X},
   MRCLASS = {70-02 (37Jxx 53Dxx 70Gxx 70Hxx)},
  MRNUMBER = {1723696},
       DOI = {10.1007/978-0-387-21792-5},
       URL = {https://doi.org/10.1007/978-0-387-21792-5},
}

@article{EliashbergPolterovich1993,
  author       = {Eliashberg, Y. and Polterovich, L.},
  title        = {{Bi-invariant Metrics on the Group of Hamiltonian Diffeomorphisms}},
  journal      = {International Journal of Mathematics},
  volume       = {4},
  pages        = {727--738},
  year         = {1993},
  zblnumber    = {0795.58016},
  mrnumber     = {1245350}
}

@article{BauerHarmsPreston2020,
  author  = {Bauer, Martin and Harms, Philipp and Preston, Stephen C.},
  title   = {Vanishing distance phenomena and the geometric approach to {SQG}},
  journal = {Archive for Rational Mechanics and Analysis},
  volume  = {235},
  pages   = {1445--1466},
  year    = {2020},
  doi     = {10.1007/s00205-019-01449-7}
}

@article{BruverisVialard2017,
  author  = {Bruveris, Martins and Vialard, Fran{\c c}ois-Xavier},
  title   = {On completeness of groups of diffeomorphisms},
  journal = {Journal of the European Mathematical Society},
  volume  = {19},
  number  = {5},
  pages   = {1507--1544},
  year    = {2017},
  doi     = {10.4171/JEMS/698}
}

@book{Palais68,
  author       = {Palais, R. S.},
  title        = {{Foundations of Global Non-Linear Analysis}},
  publisher    = {W. A. Benjamin, Inc.},
  address      = {New York--Amsterdam},
  year         = {1968},
  zblnumber    = {0164.11102},
  mrnumber     = {0248880}
}

@article{Atkin97,
  author       = {Atkin, C. J.},
  title        = {Geodesic and metric completeness in infinite dimensions},
  journal      = {Hokkaido Mathematical Journal},
  volume       = {26},
  pages        = {1--61},
  year         = {1997},
  zblnumber    = {0871.58006},
  mrnumber     = {1432537}
}

@book{AK98,
    AUTHOR = {Arnold, Vladimir I. and Khesin, Boris A.},
     TITLE = {Topological methods in hydrodynamics},
    SERIES = {Applied Mathematical Sciences},
    VOLUME = {125},
 PUBLISHER = {Springer-Verlag, New York},
      YEAR = {1998},
     PAGES = {xvi+374},
      ISBN = {0-387-94947-X},
   MRCLASS = {58-02 (35Q30 58B25 58D05 76-02 76M30)},
  MRNUMBER = {1612569},
MRREVIEWER = {Nikolai\ K.\ Smolentsev},
}

@article{kriegl2015exotic,
  title={{An exotic zoo of diffeomorphism groups on $\bR^n$}},
  author={Kriegl, Andreas and Michor, Peter W and Rainer, Armin},
  journal={Annals of Global Analysis and Geometry},
  volume={47},
  number={2},
  pages={179--222},
  year={2015},
  publisher={Springer}
}

@article{BauerBruverisMichor2014Overview,
  author  = {Bauer, Martin and Bruveris, Martins and Michor, Peter W.},
  title   = {Overview of the geometries of shape spaces and diffeomorphism groups},
  journal = {Journal of Mathematical Imaging and Vision},
  volume  = {50},
  number  = {1--2},
  pages   = {60--97},
  year    = {2014},
  doi     = {10.1007/s10851-013-0490-z}
}

@book{SrivastavaKlassen2016,
  author    = {Srivastava, Anuj and Klassen, Eric P.},
  title     = {{Functional and Shape Data Analysis}},
  series    = {{Springer Series in Statistics}},
  publisher = {{Springer-Verlag}},
  address   = {{New York}},
  year      = {2016},
  doi       = {10.1007/978-1-4939-4020-2},
  mrnumber  = {3821566},
  zbl       = {1376.62003}
}

@article{BenamouBrenier2000,
  author   = {Benamou, Jean-David and Brenier, Yann},
  title    = {{A computational fluid mechanics solution to the {Monge}-{Kantorovich} mass transfer problem}},
  journal  = {{Numerische Mathematik}},
  volume   = {84},
  number   = {3},
  year     = {2000},
  pages    = {375--393},
  doi      = {10.1007/s002110050002}
}

@article{Otto2001,
  author   = {Otto, Felix},
  title    = {{The geometry of dissipative evolution equations: the porous medium equation}},
  journal  = {{Communications in Partial Differential Equations}},
  volume   = {26},
  number   = {1--2},
  year     = {2001},
  pages    = {101--174},
  doi      = {10.1081/PDE-100002243},
  mrnumber = {1842429}
}

@article{Younes1998,
  author  = {Younes, Laurent},
  title   = {{Computable elastic distances between shapes}},
  journal = {{SIAM Journal on Applied Mathematics}},
  volume  = {58},
  number  = {2},
  year    = {1998},
  pages   = {565--586},
  doi     = {10.1137/S0036139995287685}
}

@article{Eells1966,
  author  = {Eells, Jr., James},
  title   = {A setting for global analysis},
  journal = {Bulletin of the American Mathematical Society},
  volume  = {72},
  number  = {5},
  pages   = {751--807},
  year    = {1966},
  month   = sep,
  doi     = {10.1090/S0002-9904-1966-11558-6}
}

@article{EellsElworthy1970,
  author   = {Eells, James and Elworthy, K. David},
  title    = {{Open embeddings of certain {Banach} manifolds}},
  journal  = {{Annals of Mathematics. Second Series}},
  volume   = {91},
  number   = {3},
  year     = {1970},
  pages    = {465--485},
  doi      = {10.2307/1970634},
  mrnumber = {0263120},
  zbl      = {0198.28804}
}

@book{Omori1974,
  author    = {Omori, Hideki},
  title     = {{Infinite Dimensional {Lie} Transformation Groups}},
  series    = {{Lecture Notes in Mathematics}},
  volume    = {427},
  publisher = {{Springer-Verlag}},
  address   = {{Berlin-New York}},
  year      = {1974},
  pages     = {xiv+154},
  doi       = {10.1007/BFb0063400},
  mrnumber  = {0431262},
  zbl       = {0328.58005}
}

@article{JerrardMaor2019,
  author  = {Jerrard, Robert L. and Maor, Cy},
  title   = {Vanishing geodesic distance for right-invariant {Sobolev} metrics on diffeomorphism groups},
  journal = {Annals of Global Analysis and Geometry},
  volume  = {55},
  number  = {4},
  pages   = {631--656},
  year    = {2019},
  doi     = {10.1007/s10455-018-9644-y}
}

@article{BauerBruverisHarmsMichor2012,
  author  = {Bauer, Martin and Bruveris, Martins and Harms, Philipp and Michor, Peter W.},
  title   = {Vanishing geodesic distance for the {Riemannian} metric with geodesic equation the {KdV}-equation},
  journal = {Annals of Global Analysis and Geometry},
  volume  = {41},
  number  = {4},
  pages   = {461--472},
  year    = {2012},
  doi     = {10.1007/s10455-011-9294-9}
}

@article{Eliasson1967,
  author   = {El{\'\i}asson, Halld{\'o}r I.},
  title    = {{Geometry of manifolds of maps}},
  journal  = {{Journal of Differential Geometry}},
  volume   = {1},
  year     = {1967},
  pages    = {169--194},
  doi      = {10.4310/jdg/1214427887},
  mrnumber = {0226681},
  zbl      = {0163.43901}
}

@book{Riemann2016,
  author    = {Riemann, Bernhard},
  title     = {On the Hypotheses Which Lie at the Bases of Geometry},
  editor    = {Jost, J{\"u}rgen},
  publisher = {Birkh{\"a}user},
  year      = {2016},
  note      = {Edited, historically and mathematically commented by J{\"u}rgen Jost}
}

@article{BegMillerTrouveYounes2005,
  author  = {Beg, M. Faisal and Miller, Michael I. and Trouv{\'e}, Alain and Younes, Laurent},
  title   = {{Computing large deformation metric mappings via geodesic flows of diffeomorphisms}},
  journal = {{International Journal of Computer Vision}},
  volume  = {61},
  number  = {2},
  year    = {2005},
  pages   = {139--157},
  doi     = {10.1023/B:VISI.0000043755.93987.aa}
}

@article{Michor_Mumford_vanishing_geodesic_distance,
  author  = {Michor, Peter W. and Mumford, David},
  title   = {Vanishing geodesic distance on spaces of submanifolds and diffeomorphisms},
  journal = {Documenta Mathematica},
  volume  = {10},
  pages   = {217--245},
  year    = {2005},
  doi     = {10.4171/DM/187}
}

@article{Smale1959,
  author  = {Smale, Stephen},
  title   = {Diffeomorphisms of the two-sphere},
  journal = {Proceedings of the American Mathematical Society},
  volume  = {10},
  year    = {1959},
  pages   = {621--626}
}

@article{Hatcher1983,
  author  = {Hatcher, Allen E.},
  title   = {A proof of the {Smale} conjecture, {$\operatorname{Diff}(S^3) \simeq O(4)$}},
  journal = {Annals of Mathematics},
  volume  = {117},
  number  = {3},
  pages   = {553--607},
  year    = {1983},
  doi     = {10.2307/2007035}
}

@article{Poincare1905,
  author   = {Poincar{\'e}, Henri},
  title    = {{Sur les lignes g{\'e}od{\'e}siques des surfaces convexes}},
  journal  = {{Transactions of the American Mathematical Society}},
  volume   = {6},
  number   = {3},
  year     = {1905},
  pages    = {237--274},
  doi      = {10.2307/1986219},
  mrnumber = {1500710},
  jfm      = {36.0669.01}
}

@book{AbrahamMarsden1978,
  author    = {Abraham, Ralph and Marsden, Jerrold E.},
  title     = {Foundations of Mechanics},
  edition   = {2nd},
  publisher = {Benjamin/Cummings Publishing Co., Inc.},
  address   = {Reading, MA},
  year      = {1978},
  note      = {Revised and enlarged, with the assistance of Tudor Ratiu and Richard Cushman}
}

@book{Arnold1989,
  author    = {Arnold, V. I.},
  title     = {Mathematical Methods of Classical Mechanics},
  series    = {Graduate Texts in Mathematics},
  volume    = {60},
  edition   = {2nd},
  publisher = {Springer-Verlag},
  address   = {New York},
  year      = {1989},
  note      = {Translated from the Russian by K. Vogtmann and A. Weinstein}
}

@book{McDuffSalamon2017,
  author    = {McDuff, Dusa and Salamon, Dietmar},
  title     = {Introduction to Symplectic Topology},
  series    = {Oxford Graduate Texts in Mathematics},
  volume    = {27},
  edition   = {3rd},
  publisher = {Oxford University Press},
  address   = {Oxford},
  year      = {2017}
}

@book{Rabinowitz1986,
  author    = {Rabinowitz, Paul H.},
  title     = {Minimax Methods in Critical Point Theory with Applications to Differential Equations},
  series    = {CBMS Regional Conference Series in Mathematics},
  volume    = {65},
  publisher = {American Mathematical Society},
  address   = {Providence, RI},
  year      = {1986}
}

@book{Struwe2008,
  author    = {Struwe, Michael},
  title     = {Variational Methods},
  series    = {Ergebnisse der Mathematik und ihrer Grenzgebiete. 3. Folge. A Series of Modern Surveys in Mathematics},
  volume    = {34},
  edition   = {4th},
  publisher = {Springer-Verlag},
  address   = {Berlin},
  year      = {2008},
  note      = {Applications to nonlinear partial differential equations and Hamiltonian systems}
}

@book{Sorrentino2015,
  author    = {Sorrentino, Alfonso},
  title     = {Action-Minimizing Methods in Hamiltonian Dynamics},
  series    = {Mathematical Notes},
  volume    = {50},
  publisher = {Princeton University Press},
  address   = {Princeton, NJ},
  year      = {2015},
  note      = {An introduction to Aubry--Mather theory}
}

@book{McDuffSalamon2012,
  author    = {McDuff, Dusa and Salamon, Dietmar},
  title     = {{J}-holomorphic Curves and Symplectic Topology},
  series    = {American Mathematical Society Colloquium Publications},
  volume    = {52},
  edition   = {2nd},
  publisher = {American Mathematical Society},
  address   = {Providence, RI},
  year      = {2012}
}

@book{AudinDamian2014,
  author    = {Audin, Mich{\`e}le and Damian, Mihai},
  title     = {Morse Theory and Floer Homology},
  series    = {Universitext},
  publisher = {Springer},
  address   = {London},
  year      = {2014},
  note      = {Translated from the French by Reinie Ern{\'e}}
}

@article{Hadamard1898,
  author  = {Hadamard, Jacques},
  title   = {{Les surfaces {\`a} courbures oppos{\'e}es et leurs lignes g{\'e}od{\'e}siques}},
  journal = {{Journal de Math{\'e}matiques Pures et Appliqu{\'e}es}},
  series  = {{5e s{\'e}rie}},
  volume  = {4},
  year    = {1898},
  pages   = {27--73},
  jfm     = {29.0522.01},
  url     = {https://www.numdam.org/item/JMPA_1898_5_4__27_0/}
}

@article{Birkhoff1917,
  author   = {Birkhoff, George D.},
  title    = {{Dynamical systems with two degrees of freedom}},
  journal  = {{Transactions of the American Mathematical Society}},
  volume   = {18},
  number   = {2},
  year     = {1917},
  pages    = {199--300},
  doi      = {10.1090/S0002-9947-1917-1501070-3},
  mrnumber = {1501070},
  jfm      = {46.1174.01}
}

@article{GromollMeyer1969,
  author   = {Gromoll, Detlef and Meyer, Wolfgang},
  title    = {{Periodic geodesics on compact {Riemannian} manifolds}},
  journal  = {{Journal of Differential Geometry}},
  volume   = {3},
  number   = {3--4},
  year     = {1969},
  pages    = {493--510},
  doi      = {10.4310/jdg/1214429070},
  mrnumber = {0264551},
  zbl      = {0203.54401}
}

@book{Klingenberg1978,
  author    = {Klingenberg, Wilhelm},
  title     = {{Lectures on Closed Geodesics}},
  series    = {{Grundlehren der Mathematischen Wissenschaften}},
  volume    = {230},
  publisher = {{Springer-Verlag}},
  address   = {{Berlin-New York}},
  year      = {1978},
  pages     = {ix+227},
  isbn      = {978-3-540-08393-1},
  doi       = {10.1007/978-3-642-61881-9},
  mrnumber  = {0478069},
  zbl       = {0397.58018}
}

@book{younes2010shapes,
  title={Shapes and diffeomorphisms},
  author={Younes, Laurent},
  volume={171},
  year={2010},
  publisher={Springer}
}

@article{micheli2013sobolev,
  author        = {Micheli, Mario and Michor, Peter W. and Mumford, David},
  title         = {{Sobolev} metrics on diffeomorphism groups and the derived geometry of spaces of submanifolds},
  journal       = {Izvestiya: Mathematics},
  volume        = {77},
  number        = {3},
  pages         = {541--570},
  year          = {2013},
  doi           = {10.1070/IM2013v077n03ABEH002648},
  archivePrefix = {arXiv},
  eprint        = {1202.3677}
}

@Article{HopfrinowfalseEkeland,
    AUTHOR = {Ekeland, Ivar},
     TITLE = {The {H}opf-{R}inow theorem in infinite dimension},
   JOURNAL = {J. Differential Geometry},
  FJOURNAL = {Journal of Differential Geometry},
    VOLUME = {13},
      YEAR = {1978},
    NUMBER = {2},
     PAGES = {287--301},
      ISSN = {0022-040X,1945-743X},
   MRCLASS = {58B20 (58E10)},
  MRNUMBER = {540948},
MRREVIEWER = {Karsten\ Grove},
       URL = {http://projecteuclid.org/euclid.jdg/1214434494},
}

@article{HopfrinowfalseAktkin,
  author  = {Atkin, C. J.},
  title   = {The {Hopf--Rinow} theorem is false in infinite dimensions},
  journal = {Bulletin of the London Mathematical Society},
  volume  = {7},
  number  = {3},
  pages   = {261--266},
  year    = {1975},
  month   = nov,
  doi     = {10.1112/blms/7.3.261}
}
\end{document}